\documentclass[12pt, a4]{amsart}

\usepackage{latexsym,amsmath,amssymb,amsthm,mathrsfs,color}

\usepackage[bbgreekl]{mathbbol}
\usepackage{bbm}
\usepackage{tikz-cd}
\usepackage[all]{xy}
\usepackage{stmaryrd}
\usepackage{hyperref}
\hypersetup{colorlinks,%
    citecolor=brown,%
    filecolor=black,%
   linkcolor=blue,%
   urlcolor=black}

\usepackage[toc,page]{appendix}

\usepackage[shortalphabetic]{amsrefs}
 \makeatletter
\def\append@label@year@{%
    \safe@set\@tempcnta\bib@year
    \edef\bib@citeyear{\the\@tempcnta}%
    \ifnum\bib@citeyear>9
      \append@to@stem{%
          \ifx\bib@year\@empty
          \else
            \@xp\year@short \bib@citeyear \@nil
          \fi
      }%
    \fi
}
\makeatother

\setcounter{tocdepth}{1}
\let\oldtocsection=\tocsection
\renewcommand{\tocsection}[2]{\hspace{0em}\oldtocsection{#1}{#2}}

\usepackage[a4paper]{geometry}

\geometry{top=1in, bottom=1in, left=1in, right=1in}


\def\upddots{\mathinner{\mkern 1mu\raise 1pt \hbox{.}\mkern 2mu
\mkern 2mu \raise 4pt\hbox{.}\mkern 1mu \raise 7pt\vbox {\kern 7
pt\hbox{.}}} }


\numberwithin{equation}{section}

\begin{document}
\setlength{\unitlength}{2.5cm}

\newtheorem{thm}{Theorem}[section]
\newtheorem{lm}[thm]{Lemma}
\newtheorem{prop}[thm]{Proposition}
\newtheorem{cor}[thm]{Corollary}
\newtheorem{conj}[thm]{Conjecture}
\newtheorem{specu}[thm]{Speculation}

\theoremstyle{definition}
\newtheorem{dfn}[thm]{Definition}
\newtheorem{eg}[thm]{Example}
\newtheorem{rmk}[thm]{Remark}

\newcommand{\F}{\mathbf{F}}
\newcommand{\N}{\mathbf{N}}
\newcommand{\R}{\mathbf{R}}
\newcommand{\C}{\mathbf{C}}
\newcommand{\Z}{\mathbf{Z}}
\newcommand{\Q}{\mathbf{Q}}

\newcommand{\Mp}{{\rm Mp}}
\newcommand{\Sp}{{\rm Sp}}
\newcommand{\GSp}{{\rm GSp}}
\newcommand{\GL}{{\rm GL}}
\newcommand{\PGL}{{\rm PGL}}
\newcommand{\SL}{{\rm SL}}
\newcommand{\SO}{{\rm SO}}
\newcommand{\Spin}{{\rm Spin}}
\newcommand{\GSpin}{{\rm GSpin}}
\newcommand{\Ind}{{\rm Ind}}
\newcommand{\Res}{{\rm Res}}
\newcommand{\Hom}{{\rm Hom}}
\newcommand{\End}{{\rm End}}
\newcommand{\msc}[1]{\mathscr{#1}}
\newcommand{\mfr}[1]{\mathfrak{#1}}
\newcommand{\mca}[1]{\mathcal{#1}}
\newcommand{\mbf}[1]{{\bf #1}}

\newcommand{\mbm}[1]{\mathbbm{#1}}

\newcommand{\into}{\hookrightarrow}
\newcommand{\onto}{\twoheadrightarrow}

\newcommand{\s}{\mathbf{s}}
\newcommand{\cc}{\mathbf{c}}
\newcommand{\bfa}{\mathbf{a}}
\newcommand{\id}{{\rm id}}
\newcommand{\g}{\mathbf{g}_{\psi^{-1}}}
\newcommand{\w}{\mathbbm{w}}
\newcommand{\Ftn}{{\sf Ftn}}
\newcommand{\p}{\mathbf{p}}
\newcommand{\bq}{\mathbf{q}}
\newcommand{\WD}{\text{WD}}
\newcommand{\W}{\text{W}}
\newcommand{\Wh}{{\rm Wh}}
\newcommand{\ggma}{\omega}
\newcommand{\sct}{\text{\rm sc}}
\newcommand{\Of}{\mca{O}^\digamma}
\newcommand{\gk}{c_{\sf gk}}
\newcommand{\Irr}{ {\rm Irr} }
\newcommand{\Irrg}{ {\rm Irr}_{\rm gen} }
\newcommand{\diag}{{\rm diag}}
\newcommand{\uchi}{ \underline{\chi} }
\newcommand{\Tr}{ {\rm Tr} }
\newcommand{\der}\de
\newcommand{\Stab}{{\rm Stab}}
\newcommand{\Ker}{{\rm Ker}}
\newcommand{\bfp}{\mathbf{p}}
\newcommand{\bfq}{\mathbf{q}}
\newcommand{\KP}{{\rm KP}}
\newcommand{\Sav}{{\rm Sav}}
\newcommand{\de}{{\rm der}}
\newcommand{\tnu}{{\tilde{\nu}}}
\newcommand{\lest}{\leqslant}
\newcommand{\gest}{\geqslant}
\newcommand{\tu}{\widetilde}
\newcommand{\tchi}{\tilde{\chi}}
\newcommand{\tomega}{\tilde{\omega}}
\newcommand{\Rep}{{\rm Rep}}

\newcommand{\cu}[1]{\textsc{\underline{#1}}}
\newcommand{\set}[1]{\left\{#1\right\}}
\newcommand{\ul}[1]{\underline{#1}}
\newcommand{\wt}[1]{\overline{#1}}
\newcommand{\wtsf}[1]{\wt{\sf #1}}
\newcommand{\anga}[1]{{\left\langle #1 \right\rangle}}
\newcommand{\angb}[2]{{\left\langle #1, #2 \right\rangle}}
\newcommand{\wm}[1]{\wt{\mbf{#1}}}
\newcommand{\elt}[1]{\pmb{\big[} #1\pmb{\big]} }
\newcommand{\ceil}[1]{\left\lceil #1 \right\rceil}
\newcommand{\floor}[1]{\left\lfloor #1 \right\rfloor}
\newcommand{\val}[1]{\left| #1 \right|}

\newcommand{\exc}{ {\rm exc} }

\newcommand{\motimes}{\text{\raisebox{0.25ex}{\scalebox{0.8}{$\bigotimes$}}}}

\newcommand{\nequiv}{\not \equiv}
\newcommand{\half}{\frac{1}{2}}
\newcommand{\psii}{\widetilde{\psi}}
\newcommand{\ab} {|\!|}
\newcommand{\mb}{{\widetilde{B(\F)}}}

\title[Wavefront sets for theta representations]{On the wavefront sets associated with theta representations}

\author{Fan Gao and Wan-Yu Tsai}
\address{Fan Gao: School of Mathematical Sciences, Yuquan Campus, Zhejiang University, 38 Zheda Road, Hangzhou, China 310027}
\email{gaofan@zju.edu.cn}
\address{Wan-Yu Tsai: Department of Mathematics, National Tsing Hua University, Hsinchu 300, Taiwan}
\email{wytsai@math.nthu.edu.tw}

\subjclass[2010]{Primary 11F70; Secondary 22E50}
\keywords{covering groups, theta representation, character expansion, unipotent orbits, wavefront sets, Springer correspondence, primitive ideals}
\maketitle

\begin{abstract} 
We study a conjectural formula for the maximal elements in the wavefront set associated with a theta representation of a covering group over $p$-adic fields. In particular, it is shown that the formula agrees with the existing work in the literature for various families of groups. We also recapitulate the results of an analogous formula in the archimedean case, which motivated the conjectural formula in the $p$-adic setting.
\end{abstract}
\tableofcontents

\section{Introduction}
Let $F$ be a local field of characteristic 0 with algebraic closure denoted by $\wt{F}$. Let $G$ be the $F$-rational points of a connected reductive group. Assume that $F^\times$ contains the full group $\mu_n$ of $n$-th roots of unity. In this paper we mainly consider a central cover 
$$\begin{tikzcd}
\mu_n \ar[r, hook] & \wt{G} \ar[r, two heads] & G
\end{tikzcd}$$
 of $G$ by $\mu_n$ arising from the Brylinski--Deligne framework \cite{BD}.

Every irreducible admissible representation $(\pi, V_\pi)$ of $\wt{G}$ defines a character distribution $\chi_\pi$ in a neighborhood of $0$ in $\mfr{g}={\rm Lie}(G)$. Assume that $F$ is a $p$-adic field and $G$ is split. It follows from the work of Howe \cite{How1} and Harish-Chandra \cite{HC1} for linear groups and its extension to covering groups by W.-W. Li \cite{Li3} that there exists a compact open subset $S_\pi$ of $0$ such that for every smooth function $f$ with compact support in $S_\pi$, one has
\begin{equation} \label{E:char}
\chi_\pi(f) = \sum_{\mca{O} \in \mca{N}}c_\mca{O} \cdot \int \hat{f} \ \mu_\mca{O},
\end{equation}
where $\mca{N}$ denotes the set of nilpotent orbits in $\mfr{g}$ under the conjugation action of $G$. Here $\mu_\mca{O}$ is a certain Haar measure on $\mca{O}$ properly normalized, and $\hat{f}$ is the Fourier transform of $f$ with respect to the Cartan--Killing form on $\mfr{g}$ and a non-trivial character 
$$\psi_\natural: F\to \C^\times;$$
one has $c_\mca{O}:=c_{\mca{O}, \psi_\natural} \in \C$. See the references mentioned above and \cite{MW1, Var1}. An analogue of \eqref{E:char} for $F=\R$ was given by Barbasch--Vogan \cite{BV3}, see the discussion in \S \ref{s:WFAV}.

Denote
$$\mca{N}_{\rm tr}(\pi) = \set{\mca{O} \in \mca{N}: \ c_\mca{O} \ne 0}$$
and let
$$\mca{N}_{\rm tr}^{\rm max}(\pi) \subset \mca{N}_{\rm tr}(\pi)$$
be the subset consisting of all maximal elements in $\mca{N}_{\rm tr}(\pi)$ with respect to the partial order $\mca{O}_1 \lest \mca{O}_2$, which is defined by $\mca{O}_1 \subset \overline{\mca{O}}_2$. Here $\wt{\mca{O}}$ denotes the topological closure of $\mca{O}$. The wavefront set of $\pi$ is given by
$${\rm WF}(\pi) = \bigcup_{\mca{O} \in \mca{N}_{\rm tr}(\pi)} \wt{\mca{O}},$$
which contains elements in $\mca{N}_{\rm tr}^{\rm max}$ as the maximal nilpotent classes. It is known that the set $\mca{N}_{\rm tr}^{\rm max}(\pi)$ is equal to the set of maximal nilpotent orbits with respect to which the generalized Whittaker models for $\pi$ are nontrivial, see \cite{MW1, Var0, Pate}. 

While the two sets $\mca{N}_{\rm tr}(\pi)$ and $\set{c_\mca{O}: \mca{O} \in \mca{N}_{\rm tr}(\pi)}$ contain deep and important arithmetic and representation-theoretic information of $\pi$, they are not easily accessible and there is no simple formula to compute them, see for instance \cite{HII, HIIc, HC1} and references therein. The reader is also referred to the work of Ginzburg \cite{Gin0, Gin2} for global analogue and open questions for automorphic representations. 

A particularly important case is that $\mca{N}_{\rm tr}^{\rm max}(\pi)$, when taken closure in $\mfr{g}\otimes \wt{F}$, is the minimal orbit $\mca{O}_{\rm min}$, whence $\pi$ is called a minimal representation \cite{Tor, GS05}.
In fact, for the existence of such a minimal representation, it is necessary to relax the condition so that $\pi$ may be a genuine representation of a finite degree central cover of $G$. For example, for $\Sp_{2r}$ the Weil representation is a minimal representation defined on the double cover $\wt{\Sp}_{2r}^{(2)}$. For simply-laced groups $G$, minimal representations are studied and constructed (whenever they exist on $G$) by Kazhdan and Savin \cite{KS, Sav94}. For exceptional groups one can refer to the work as in \cite{Sav4, Rum, LoSa1}. 

Such minimal representations play a pivotal role in various instances of liftings of representations. Indeed, the Weil representation enables the classical theory of theta liftings. A cubic correspondence between $\wt{\SL}_2^{(3)}$ and $\SL_2$, both local and global, was established by using the minimal representations on $\wt{G}_2^{(3)}$, see \cite{GRS1}.  For liftings using the minimal representation on $\wt{F}_4^{(2)}$, see \cite{Gin5}.

Even if $\pi$ of $\wt{G}$ is not a minimal representation, it is possible to analyze the decomposition of $\pi$ to a pair of mutual centralizer subgroups and thus have an analogous lifting of representations, as long as the representation is ``small" enough, in the sense that $\mca{N}_{\rm tr}^{\rm max}(\pi)$ contains small nilpotent orbits. Such small representations are most often the residues of certain Borel Eisenstein series or closely related representations. When the degree $n$ of the covering is small, it is expected that such a residual representation is small. For example, such small representations and its entailed theory of liftings for $\wt{\SO}_{2r+1}^{(4)}$ were discussed in \cite{BFrG2, BFrG, LoSa2}; similar analysis for the double cover $\wt{\GSp}_{2r+1}^{(2)}$ was also carried out in \cite{Kap004}.

For degree-four cover of $\Sp_{2r}$, a theory of theta lifting was investigated by Leslie \cite{Les}. For $\wt{\Sp}_{2r}^{(n)}$ with odd $n$, the theta representations were studied in depth by Friedberg and Ginzburg towards a theory of generalized theta liftings and descent on such high degree covers of $\Sp_{2r}$, see the series of papers \cite{FG2, FG4, FG5, FG6}. In particular, in \cite{FG5, FG6} the authors gave a conjectural description of $\mca{N}_{\rm tr}^{\rm max}(\Theta(\wt{\Sp}_{2r}^{(n)} ))$ for the theta representation of $\wt{\Sp}_{2r}^{(n)}$, which asserts that 
$$\mca{N}_{\rm tr}^{\rm max}(\Theta(\wt{\Sp}_{2r} ^{(n)})) \otimes \wt{F}:=\set{\mca{O}\otimes \wt{F}: \ \mca{O} \in \mca{N}_{\rm tr}^{\rm max}(\Theta(\wt{\Sp}_{2r} ^{(n)}))}$$
equals the symplectic collapse of the partition $(n^q t)$ of $2r$, where $2r=qn + t$ with $0\lest t <n$. In fact, this conjectural formula for $\mca{N}_{\rm tr}^{\rm max}(\Theta(\wt{\Sp}_{2r} ^{(n)})) \otimes \wt{F}$  could be viewed as a natural analogue for the case of Kazhdan--Patterson covers $\wt{\GL}_r^{(n)}$. Indeed, it was shown by Savin \cite{Sav2} and Y.-Q. Cai \cite{Cai1} that
$$\mca{N}_{\rm tr}^{\rm max}(\Theta(\wt{\GL}_r^{(n)})) = \set{(n^q t)},$$
where $r=qn + t$ with $0\lest t < n$.

\subsection{Main result}
Since it is important to determine the set $\mca{N}_{\rm tr}^{\rm max}(\Theta(\wt{G}))$ for a covering group, the goal of our paper is to point out a unified (conjectural) recipe of computing this set and also $c_\mca{O}$ with $\mca{O} \in \mca{N}_{\rm tr}^{\rm max}(\Theta(\wt{G}))$ for a persistent covering group. The formula for $\mca{N}_{\rm tr}^{\rm max}(\Theta(\wt{G})) \otimes \wt{F}$ relies on a certain Macdonald representation and the Springer correspondence, see Conjecture \ref{C:main} for details. The main results constitute the following:
\begin{enumerate}
\item[(i)] In \S \ref{S:generic}, we  verify parts of Conjecture \ref{C:main} for generic $\Theta(\wt{G})$, i.e., when it possesses a nontrivial Whittaker model, see Theorem \ref{T:generic}. In fact, we also state a natural generalization of Conjecture \ref{C:main} for all constituents of an unramified regular principal series, see Conjecture \ref{C:main-g}.
\item[(ii)] In \S \ref{S:p}, we show that Conjecture \ref{C:main} is compatible with the work of  Friedberg--Ginzburg and Y.-Q. Cai for  $\wt{\Sp}_{2r}$ and $\wt{\GL}_r$ respectively, see Theorems \ref{T:Sp} and \ref{T:GL}. In addition, for $\wt{\Sp}_{2r}^{(4)}, \wt{\SO}_{2r+1}^{(4)}$ and $\wt{\rm GSpin}_{2r+1}^{(2)}$, we also show that Conjecture \ref{C:main} agrees with the respective work \cite{Les}, \cite{BFrG2} and \cite{Kap004}, as mentioned above; see the end of \S \ref{S:p}. 
\item[(iii)] In \S \ref{S:arch}, we consider the archimedean analogue of Conjecutre \ref{C:main}, and recapitulate the relevant work in the literature, see Theorem \ref{p:diagram} for the main result. As an application, we discuss about the complex and real case separately. A special family in the real case concerns the Langlands quotients of certain pseudospherical principal series studied in \cite{ABPTV}, which are just the theta representations we focus in this paper. The results on such family of representations, as discussed in  \S \ref{SSS:R-spl}, gave us the initial motivation to the formulation of Conjecture \ref{C:main}.
\end{enumerate}
  
In the last section \S \ref{S:rmk}, we remark on several (lacking) aspects of the formulation in Conjecture \ref{C:main}, including some possibility on the uniformization of a statement for all local fields, and on the relation with the generalized Whittaker space as studied in \cite{MW1, GGS17, GGS} and so on.

In this paper, we follow the standard notations and terminologies from the work of Brylinski--Deligne \cite{BD} and also those in \cite{We6, GG}. We also use the following notations:
\begin{enumerate}
\item[--]  $\floor{x} \in \Z$ to denote the integral part of $x\in \R$,
\item[--] $[a, b]$ to denote $[a, b] \cap \Z$ for every $a, b \in \Z$.
\end{enumerate}


\section{Wavefront set for $\Theta(\pi^\dag, \nu)$} \label{S:main}
Let $F$ be a local field of characteristic 0 endowed with a valuation $|\cdot|_F$. If $F$ is $p$-adic, then we denote by $O_F \subset F$ the ring of integers and by $\varpi$ a fixed uniformizer. Let $G$ be the $F$-rational points of a split reductive group over $F$. Denote by 
$$(X,\ \Phi, \ \Delta; \ Y, \Phi^\vee, \Delta^\vee)$$
 the root datum of $G$, where $X$ is the character lattice and $Y$ the cocharacter lattice of a split torus $T\subset G$. Here $\Delta$ is a choice of simple roots, and $Y^{sc} \subset Y$ is the coroot sublattice and $X^{sc} \subset X$ the root lattice.

Let $$Q: Y \longrightarrow \Z$$ be a Weyl-invariant quadratic form, and let $B_Q$ be the associated bilinear form. One has an associated covering group  $\wt{G}:=\wt{G}_Q$ of $G$ by $\mu_n$. The dual group $\wt{G}^\vee$ has a root datum
$$(Y_{Q,n}, \ \Delta_{Q,n}^\vee;\ X_{Q,n},\ \Delta_{Q,n}).$$
Here $Y_{Q,n} \subset Y$ is the character lattice of $\wt{G}^\vee$ given by 
$$Y_{Q,n}=\set{y\in Y: \ B_Q(y, z)\in n\Z \text{ for all } z \in Y},$$
and $X_{Q,n}:=\Hom(Y_{Q,n}, \Z)$. The set $\Delta_{Q,n}^\vee$ consists of the re-scaled simple coroots 
$$\alpha_{Q,n}^\vee:=n_\alpha \alpha^\vee:=\frac{n}{\gcd(n,Q(\alpha^\vee))} \alpha^\vee.$$
Let $Y_{Q,n}^{sc} \subset Y_{Q,n}$ be the sublattice spanned by $\Delta_{Q,n}^\vee$.

\subsection{Theta representations}  \label{SS:Theta}
We have the following relations among various real vector spaces:
\begin{equation} \label{Eq:nu}
\begin{tikzcd}
\Hom(Y, \R) \ar[r, equal] & X\otimes \R \ar[r, "f", "\simeq"'] & X_{Q,n} \otimes \R \ar[r, equal] & \Hom(Y_{Q,n}, \R) \ar[r, two heads, "\phi"] & \Hom(Y_{Q,n}^{sc}, \R).
\end{tikzcd}
\end{equation}
Here $f$ is an isomorphism since $X$ is a sublattice of $X_{Q,n}$ of the same rank, and the surjectivity of $\phi$ follows from the elementary divisor theorem for the pair of lattices $Y_{Q,n}^{sc} \subset Y_{Q,n}$. If $Y_{Q,n}$ and $Y_{Q,n}^{sc}$ have the same rank, for example when $G$ is semsimple, then $\phi$ is also an isomorphism. We identify the first four real vector spaces in \eqref{Eq:nu}.

For every $\nu \in X\otimes \R$, there is a map
$$\delta_\nu: T \longrightarrow \C^\times$$
given by 
$$\delta_\nu(y\otimes a) = |a|_F^{\nu(y)}$$
on the generators $y\otimes a\in T$, where $\nu(y)$ is the natural pairing between $Y$ and $X\otimes \R$.

Let $\wt{T} \subset \wt{G}$ be the covering torus of $\wt{G}$. We assume that there exists a certain distinguished (finite-dimensional) genuine representation $\pi^\dag$ of $\wt{T}$ determined by a distinguished genuine central character $\chi^\dag$ of $Z(\wt{T})$; see \cite[\S6]{GG} for a discussion on the necessary conditions for its existence. It is also shown in \cite[Theorem 6.6]{GG} that the set of all distinguished central characters of $Z(\wt{T})$ (and thus also distinguished representations of $\wt{T}$), whenever exists, is a torsor over
$$\Hom(F^\times/F^{\times n}, Z^\heartsuit(\wt{G}^\vee)),$$
where 
$$Z^\heartsuit(\wt{G}^\vee):=\Hom(Y_{Q,n}/(nY + Y_{Q,n}^{sc}), \C^\times) \subset Z(\wt{G}^\vee).$$
Note that if $G$ is simply-connected and semisimple, then $nY \subset Y_{Q,n}$ and thus $Z^\heartsuit(\wt{G}^\vee) = Z(\wt{G}^\vee)$.
The character $\chi^\dag$ is always Weyl-invariant and satisfies
$$\chi^\dag(\wt{h}_\alpha(a^{n_\alpha})) =1$$
for every $a\in F^\times$ and $\alpha\in \Delta$. 
Relying on a nontrivial additive character $\psi: F\to \C^\times$ and thus the associated Weil index, a detailed construction is given in \cite[\S 7]{GG} of such a distinguished central character denoted by $\chi_\psi$ and the distinguished representation $\pi_\psi$.  

For every $\nu\in X\otimes \R$, denote by
$$I(\pi^\dag, \nu):=\Ind_{\wt{B}}^{\wt{G}} (\pi^\dag \otimes \delta_\nu)$$
the normalized induced principal series of $G$. If $F$ is non-archimedean, then the space $I(\pi^\dag, \nu)$ consists of locally constant functions on $G$ valued in the finite-dimensional space $\pi^\dag \otimes \delta_\nu$. If $F$ is archimedean, we further take the $K$-finite smooth vectors which afford the structure as a $(\mfr{g}, K)$-module, where $K \subset G$ is a fixed maximal compact subgroup. By abuse of notation, we still use $I(\pi^\dag, \nu)$ to denote this $(\mfr{g}, K)$-module. For $F=\R$, the representation $I(\pi^\dag, \nu)$ is just the one studied in \cite{ABPTV}, see also the discussion in \S \ref{SSS:R-spl}.

\begin{dfn} \label{D:exc}
A vector $\nu \in X\otimes \R$ is called an exceptional character if $\nu(\alpha_{Q,n}^\vee) =1$ for every $\alpha\in \Delta$.
\end{dfn}
It follows from the Langlands classification theorem for covers (see \cite{BJ1}) that if $\nu\in X\otimes \R$ is exceptional, then we have
$$I(\pi^\dag, \nu) \onto \Theta(\pi^\dag, \nu),$$
where $\Theta(\pi^\dag, \nu)$ is the unique Langlands quotient of $I(\pi^\dag, \nu)$.

\begin{eg} \label{Eg:nu}
By definition, the map $\nu \mapsto \phi(\nu)$ is constant on the exceptional characters in $X\otimes \R$, and in fact
$$\phi(\nu) = \sum_{\alpha\in \Delta} (\omega_\alpha/n_\alpha) \in X^{sc}\otimes \R \subset X\otimes \R,$$
where $\omega_\alpha$ and $\omega_\alpha/n_\alpha$ are the fundamental weights associated with $\alpha^\vee$ and $\alpha_{Q,n}^\vee$ respectively.  If the root system of $G$ is of simply-laced type, then 
$$\phi(\nu) = \rho/n_\alpha \in X\otimes \R,$$
where $\rho = \sum_{\alpha \in \Delta} \omega_\alpha$.
\end{eg}

\subsection{Saturated and persistent covers}
A covering group $\wt{G}$ is called saturated (see \cite[Definition 2.1]{Ga6}) if 
$$Y^{sc} \cap Y_{Q,n} = Y_{Q,n}^{sc},$$
where the one-sided inclusion $\supset$ always holds. In general, for every $\alpha\in \Phi$ one has
$$\Z[\alpha^\vee] \cap Y_{Q,n} = \Z[i_\alpha \cdot \alpha_{Q,n}^\vee] \text{ with } i_\alpha \in \set{1, 1/2},$$
and $i_\alpha =1/2$ only if $n_\alpha$ is even. Set
$$\tilde{n}_\alpha = i_\alpha \cdot n_\alpha, \ \tilde{\alpha}_{Q,n}^\vee=\tilde{n}_\alpha \cdot \alpha^\vee, \text{ and } \tilde{\alpha}_{Q,n} = \alpha/\tilde{n}_\alpha$$
for every $\alpha \in \Phi$, and
$$\tilde{\Phi}_{Q,n}^\vee:=\set{\tilde{\alpha}_{Q,n}^\vee: \ \alpha\in \Phi}.$$
Let 
$$\tilde{Y}_{Q,n}^{sc} \subset Y_{Q,n}$$
 be the sublattice spanned by $\tilde{\Phi}_{Q,n}$, and we call it the saturation of $Y_{Q,n}^{sc}$. One has
$$Y_{Q,n}^{sc} \subset \tilde{Y}_{Q,n}^{sc} \subset Y_{Q,n},$$
and if $\wt{G}$ is saturated, then $Y_{Q,n}^{sc} = \tilde{Y}_{Q,n}^{sc}$. However, the converse may not hold, for an example, see $\wt{\SL}_3^{(3)}$ whose dual group is $\SL_3$. In fact, we have an essentially complete understanding of the case $Y_{Q,n}^{sc} \subsetneq \tilde{Y}_{Q,n}^{sc}$ as follows.

\begin{lm} \label{L:clas}
Let $\alpha \in \Delta$. If $\Z\alpha^\vee \cap Y_{Q,n} = \Z[\alpha_{Q,n}^\vee/2]$, then necessarily $\alpha$ is a long simple root, $2|\angb{\alpha}{y}$ for all $y\in Y$, and thus the root system of $G$ is of type $C_r$. In particular, if $G$ is semisimple, then the only cover $\wt{G}$ such that $Y_{Q,n}^{sc} \subsetneq \tilde{Y}_{Q,n}^{sc}$ satisfies exactly the following:
\begin{enumerate}
\item[--] $G=\Sp_{2r}$ and $n_{\alpha_r}$ is even for the unique long simple root $\alpha_r$,
\item[--] $ \tilde{Y}_{Q,n}^{sc}= Y_{Q,n} =(n_{\alpha_r}/2)\cdot Y$, while $Y_{Q,n}^{sc}$ is spanned by $\set{(n_{\alpha_r}/2)\cdot \alpha_i^\vee: \alpha_i \in \Delta \text{ is short}} \cup \set{n_{\alpha_r}\alpha_r^\vee}$.
\end{enumerate}
In any case, for an arbitrary $\wt{G}$ the set $\tilde{\Phi}_{Q,n}^\vee$ forms a root system.
\end{lm}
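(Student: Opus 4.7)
The strategy is to translate the lattice hypothesis into an arithmetic divisibility condition, extract a parity constraint on the pairing $\angb{\alpha}{\cdot}$, and then use a case analysis of irreducible root systems to pin down the unique geometric configuration. The remaining explicit assertions follow by direct computation.

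Setting $d_\alpha = \gcd(n, Q(\alpha^\vee))$ so that $n_\alpha = n/d_\alpha$, the hypothesis $\Z\alpha^\vee \cap Y_{Q,n} = \Z[\alpha_{Q,n}^\vee/2]$ is equivalent to the simultaneous assertion that $n_\alpha$ is even and $(n_\alpha/2)\alpha^\vee \in Y_{Q,n}$. Using the standard identity $B_Q(\alpha^\vee, z) = Q(\alpha^\vee)\angb{\alpha}{z}$, which one obtains by expanding the Weyl-invariance $Q(s_\alpha(z)) = Q(z)$, the second condition becomes
\[
\frac{Q(\alpha^\vee)}{d_\alpha} \cdot \angb{\alpha}{z} \in 2\Z \quad \text{for all } z \in Y.
\]
Since $n/d_\alpha = n_\alpha$ and $Q(\alpha^\vee)/d_\alpha$ are coprime and the former is even, the latter must be odd; hence $\angb{\alpha}{z} \in 2\Z$ for all $z \in Y$. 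This already yields the intermediate assertion $2 \mid \angb{\alpha}{y}$ for all $y \in Y$.

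Identifying $X$ with $\Hom(Y, \Z)$, this parity condition is equivalent to $\alpha \in 2X$. Since $Y \supseteq Y^{sc}$, it forces the entire row of $\alpha$ in the Cartan matrix to consist of even entries. An inspection across the irreducible Cartan matrices shows this can occur only in a $C_r$-component with $\alpha$ the long simple root: in types $A_r, D_r, E_r$ all off-diagonal entries are $0$ or $-1$; in $B_r$ and $F_4$ every row contains an entry equal to $\pm 1$; in $G_2$ the long root's row contains $-3$; and only in $C_r$ is the row of the long simple root $\alpha_r = 2e_r$ entirely even. This proves $\alpha$ is long and the root system of $G$ is of type $C_r$. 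If moreover $G$ is semisimple, the stronger condition $2\mid\angb{\alpha}{y}$ over all of $Y$ rules out the adjoint quotient (for which $(e_1+\cdots+e_r)/2 \in Y$ gives pairing $1$ with $\alpha_r$), forcing $G = \Sp_{2r}$; the evenness of $n_{\alpha_r}$ was already observed. The main technical obstacle is this case analysis, but it reduces to a short tabulation via the classification of rank-two Cartan matrices.

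To obtain the explicit description in the $\Sp_{2r}$ case, I would use that every Weyl-invariant integer-valued quadratic form on $Y=\Z^r$ is of the form $Q(y)=k\sum_i y_i^2$ with $k=Q(\alpha_r^\vee)$, so $B_Q(y,z)=2k\sum_i y_i z_i$. Writing $n=2md$ with $d=\gcd(n,k)$ and $m=n_{\alpha_r}/2$, a direct calculation yields $Y_{Q,n}=m\Z^r=(n_{\alpha_r}/2)\cdot Y$; the lattice $\tilde Y_{Q,n}^{sc}$ generated by $m(e_i-e_{i+1})$ and $me_r$ then exhausts $Y_{Q,n}$, while replacing the generator $me_r$ by $n_{\alpha_r}\alpha_r^\vee=2me_r$ gives the claimed index-two sublattice $Y_{Q,n}^{sc}$. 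Finally, that $\tilde\Phi_{Q,n}^\vee$ is always a root system splits into two cases: in the saturated case it coincides with the coroot system $\Phi_{Q,n}^\vee$ of $\wt G^\vee$, while on any non-saturated $C_r$-factor the rescaling replaces $\Phi_{Q,n}^\vee$ (of type $C_r$) by the set $\set{\pm m(e_i\pm e_j),\pm m e_i}$, which is a scaled $B_r$ root system.
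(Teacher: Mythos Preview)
Your proof is correct and follows essentially the same route as the paper's: derive the parity condition $2\mid\angb{\alpha}{y}$ from the lattice hypothesis, inspect Cartan matrices to force type $C_r$ with $\alpha$ long, then rule out $\PGSp_{2r}$ via the fundamental coweight $\omega_r^\vee=(e_1+\cdots+e_r)/2$. Your derivation of the parity step (using that $n_\alpha$ and $Q(\alpha^\vee)/d_\alpha$ are coprime with the former even) is in fact cleaner than the paper's somewhat compressed version, and you supply explicit detail for the lattice identities in the $\Sp_{2r}$ case and for the final root-system claim, both of which the paper leaves as ``easy''.

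One small terminological slip: in your last paragraph the relevant dichotomy on each irreducible factor is ``all $i_\alpha=1$'' versus ``some $i_\alpha=1/2$'', not ``saturated'' versus ``non-saturated''. The paper's own example $\wt{\SL}_3^{(3)}$ shows a cover can fail to be saturated while still having $\tilde Y_{Q,n}^{sc}=Y_{Q,n}^{sc}$ (equivalently $\tilde\Phi_{Q,n}^\vee=\Phi_{Q,n}^\vee$). This does not affect your argument, since what you actually use is that whenever some $i_\alpha=1/2$ the first part of the lemma forces a $C_r$-component, and there (as you compute) $\tilde n_\beta=n_{\alpha_r}/2$ uniformly, giving the scaled $B_r$ system; on every other component $\tilde\Phi_{Q,n}^\vee=\Phi_{Q,n}^\vee$ is already the coroot system of the dual group.
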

\begin{proof}
We consider the unique simple root of $\SL_2$ as being long. Suppose $Y_{Q,n}^{sc} \subsetneq \tilde{Y}_{Q,n}^{sc}$, then $n_\alpha \alpha^\vee/2 \in Y^{sc} \cap Y_{Q,n}$
for some $\alpha \in \Delta$. This implies that 
$$\frac{n_\alpha}{2} B_Q(\alpha^\vee, y) = \frac{n_\alpha Q(\alpha^\vee)}{2} \angb{\alpha}{y}  \in n_\alpha \Z$$
 for all $y\in Y$, i.e., $2|\angb{\alpha}{y}$. If the semisimple rank of $G$ is one, then the assertions are clear.  Assume the semisimple rank of $G$ is at least two. Then $2|\angb{\alpha}{\beta}$ for all $\beta \in \Delta$. This is possible only if the root system of $G$ is of type $C_r$ and that $\alpha$ is the unique long root.
 
 If $G$ is semisimple, then necessarily $G=\Sp_{2r}$ or ${\rm PGSp}_{2r}$. However, in the latter case the fundamental coweight $\omega_r^\vee$ of $\alpha_r$ lies in $Y$ and we have $\angb{\alpha}{\omega_r^\vee} =1$. Thus, we must have $G=\Sp_{2r}$. The rest of the assertions follows easily from this.
\end{proof}

\begin{dfn}
An element $\tnu \in X\otimes \R$ is called a saturation of an exceptional character $\nu \in X\otimes \R$ if $\tilde{\nu}(\tilde{\alpha}_{Q,n}^\vee) =1$ for every $\alpha\in \Delta$.
\end{dfn}
If $\wt{G}$ is saturated, then $\phi(\tnu) = \phi(\nu) \in \Hom(Y_{Q,n}^{sc}, \R)$ for every saturation $\tnu$ of an exceptional $\nu$.

We also recall the notion of a persistent cover as follows (see \cite[Definition 2.3]{Ga6}). Consider
$$\msc{X}_{Q,n}^{sc}:=Y/Y_{Q,n}^{sc}, \ \msc{X}_{Q,n}=Y/Y_{Q,n},$$
 which are both endowed with the twisted Weyl action 
 $$w[y]:=w(y-\rho^\vee) + \rho^\vee.$$
 Here $\rho^\vee$ is the half sum of all positive coroots in $\Phi^\vee$. For every $y\in Y$, let $y^\dag$ and $y^\ddag$ denote its image in $\msc{X}_{Q,n}^{sc}$ and $\msc{X}_{Q,n}$ respectively.  An $n$-fold cover $\wt{G}$ is called persistent if 
$${\rm Stab}_W(y^\dag; \msc{X}_{Q,n}^{sc}) = {\rm Stab}_W(y^\ddag; \msc{X}_{Q,n})$$
for every $y\in Y$. While persistency is a slightly technical condition, we note the following:
\begin{enumerate}
\item[$\bullet$] a saturated cover is always persistent, 
\item[$\bullet$] if $G$ is semisimple and simply-connected, then $G$ is saturated if and only if its dual group $\wt{G}^\vee$ is of adjoint type, i.e., $Y_{Q,n} = Y_{Q,n}^{sc}$.
\end{enumerate}
As another example, every cover of $\GL_r$ is saturated and thus persistent. On the other hand, the cover $\wt{\SL}_2^{(n)}$ associated with $Q(\alpha) = -1$ is saturated if $n$ is odd, and is persistent but not saturated if $4|n$; if $n\in 4\Z +2$, then $\wt{\SL}_2^{(n)}$ is not persistent. For more examples of saturated covers, we refer to \cite[\S 2.7]{We6}.

\subsection{The set $\mca{N}_{\rm tr}^{\rm max}(\Theta(\pi^\dag, \nu))$} 
To state the conjectural formula, we first briefly recall the Macdonald representation of a Weyl group and the Springer correspondence.

For every $\nu \in X\otimes \R$, denote by
$$W_\nu=\set{w\in W: \ w(\nu) - \nu \in X^{sc}} \subset W$$
the integral Weyl subgroup associated with $\nu$. It is a reflection subgroup associated with the root subsystem
$$\Phi_\nu = \set{\alpha\in \Phi: \ \angb{\nu}{\alpha^\vee} \in \Z}.$$
In fact, the set 
$$\Phi_\nu^+:=\Phi_\nu \cap \Phi^+$$ is a set of positive roots of $\Phi_\nu$, and we let $\Delta_\nu \subset \Phi_\nu^+$ be the associated simple roots for the system $\Phi_\nu$. Note that in general $\Delta_\nu \ne \Phi_\nu \cap \Delta$.

One has a  composite of canonical surjections
$$\begin{tikzcd}
\Hom(Y_{Q,n}, \R) \ar[r, two heads, "\tilde{\phi}"] \ar[rr, bend right =15, "\phi"'] & \Hom(\tilde{Y}_{Q,n}^{sc}, \R) \ar[r, two heads] & \Hom(Y_{Q,n}^{sc}, \R),
\end{tikzcd}$$
which is $W$-equivariant with respect to the usual reflection action.
We have 
$$\Ker(\tilde{\phi}) \subset \Ker(\phi),$$
both of which are fixed by the Weyl group pointwise. Thus, for every $\nu \in X\otimes \R$ the root subsystem $\Phi_\nu$ and $W_\nu$ depend only on $\phi(\nu)$ or $\tilde{\phi}(\nu)$; that is, $W_\nu$ is actually equal to the integral Weyl subgroup associated
with $\tilde{\phi}(\nu)$ and $\phi(\nu)$, with respect to the $W$-action on $\Hom(\tilde{Y}_{Q,n}^{sc}, \R)$ and $\Hom(Y_{Q,n}^{sc}, \R)$, respectively.

Let $\varepsilon_\nu = \varepsilon_{W_\nu}$ be the sign character of $W_\nu$. The construction of the Macdonald representation $j_{W_\nu}^W(\varepsilon_\nu) \in \Irr(W)$ arising from the truncated $j$-induction (see \cite{Mac1, LuSp1} or \cite[\S 11.2]{Car}) is given as follows. First, we have
$$\wp_\nu=\prod_{\alpha \in \Phi_\nu^+} \alpha,$$
which is a homogeneous rational-valued polynomial on $Y\otimes \R$. Let 
$$P(\Phi_\nu) =\set{w(\wp_\nu): \ w\in W}$$
be the subspace of the symmetric algebra ${\rm Sym}(X\otimes \R)$ spanned by the $w(\wp_\nu)$'s. It is shown in \cite{Mac1} that $P(\Phi_\nu)$ affords an irreducible representation of $W$ which we denote by 
$$j_{W_\nu}^W(\varepsilon_\nu).$$
In fact, $j_{W_\nu}^W(\varepsilon_\nu)$ is the unique subrepresentation of $\Ind_{W_\nu}^W(\varepsilon_\nu)$ governed by the leading term of a certain fake degree polynomial associated with the natural reflection representation of $W$ (see \cite[\S 11.1]{Car}). The two special cases are $W_\nu = \set{1}$ and $W_\nu = W$, for which the representation $j_{W_\nu}^W(\varepsilon_\nu)$ equals $\mbm{1}$ and $\varepsilon_W$ respectively.

To recall the Springer correspondence \cite{Spr}, let $\mfr{g}\otimes \wt{F}$ be the Lie algebra of $G$ over the algebraically closed field $\wt{F}$. Let $\mfr{B}$ be the flag variety of all Borel subalgebras of $\mfr{g}\otimes \wt{F}$. For a nilpotent element $x\in \mfr{g}\otimes \wt{F}$ one has the subvariety $\mfr{B}_x$ of Borel subalgebras containing $x$. The group $G_{ad}^x$, which is the stabilizer of $x$ in $G_{ad}$, acts on $\mfr{B}_x$. One has a well-defined action of $G_{ad}^x$ on $H^*(\mfr{B}_x, \wt{F})$ which factors through the component group 
$$A_x:=G_{ad}^x/(G_{ad}^x)^o.$$
There is a natural action of $W$ on $H^*(\mfr{B}_x, \wt{F})$ which commutes with that of $A_x$. This gives a decomposition of the top degree cohomology space
$$H^{\rm top}(\mfr{B}_x, \wt{F}) = \bigoplus_{\eta \in \Irr(A_x)} \eta \boxtimes \sigma_\eta,$$
where $\sigma_\eta \in \set{0} \cup \Irr(W)$. There are many properties of the correspondence thus established, one of which concerns us is that every $\sigma \in \Irr(W)$ is isomorphic to $\sigma_\eta$ for a unique nilpotent orbit $\mca{O}_x$ and a unique $\eta \in \Irr(A_x)$. In fact, $A_x$ depends only on the conjugacy class $\mca{O}_x$ of $x$ and thus we may write $A_{\mca{O}}$ for $A_x$ for any $x\in \mca{O}$.
Defining
$$\mca{N}^{\rm en}=\set{(\mca{O}, \eta): \ \mca{O} \in \mca{N} \text{ and } \eta \in \Irr(A_\mca{O})},$$
we thus obtain an injective map
$$\begin{tikzcd}
{\rm Spr}: \Irr(W) \ar[r, hook] & \mca{N}^{\rm en}
\end{tikzcd}$$
denoted by
$${\rm Spr}(\sigma)=(\mca{O}_{\rm Spr}(\sigma), \eta(\sigma));$$
we call
$$\mca{O}_{\rm Spr}(\sigma) \subset \mfr{g}\otimes \wt{F}$$
the nilpotent orbit associated with $\sigma$. In particular, we have  $\mca{O}_{\rm Spr}(\mbm{1}) = \mca{O}_{\rm reg}$, the regular orbit; on the other hand, $\mca{O}_{\rm Spr}(\varepsilon_W) = \mca{O}_0$, the trivial orbit. Note that for every $\mca{O} \in \mca{N}$, the pair $(\mca{O}, \mbm{1})$ lies in the image of ${\rm Spr}$, i.e., $(\mca{O}, \mbm{1}) = {\rm Spr}(\sigma_\mca{O})$ for a unique $\sigma_\mca{O} \in \Irr(W)$. This gives us a well-defined injective map
$$\begin{tikzcd}
{\rm Spr}_\mbm{1}^{-1}: \mca{N} \ar[r, hook] & \Irr(W) 
\end{tikzcd}$$
given by 
$${\rm Spr}_\mbm{1}^{-1}(\mca{O}):={\rm Spr}^{-1}((\mca{O}, \mbm{1})).$$
It is clear that
$$\mca{O}_{\rm Spr} \circ {\rm Spr}_\mbm{1}^{-1} = \text{id}_{\mca{N}};$$
however, $ {\rm Spr}_\mbm{1}^{-1}\circ \mca{O}_{\rm Spr}$ may not be the identity map on $\Irr(W)$.

One has the permutation representation
$$\sigma^\msc{X}: W \longrightarrow {\rm Perm}(\msc{X}_{Q,n})$$
given by the twisted Weyl action $w[y] = w(y-\rho^\vee) + \rho^\vee$.

\begin{conj} \label{C:main}
Let $F$ be $p$-adic with $p\nmid n$. Let $\wt{G}$ be a persistent $n$-fold covering group. Let $\nu \in X\otimes \R$ be exceptional and let $\tilde{\nu} \in X\otimes \R$ be a saturation of $\nu$. Then for the Harish-Chandra local character expansion of $\Theta(\pi^\dag, \nu)$ as in \eqref{E:char}, one has
\begin{equation} \label{E:main1}
\mca{N}_{\rm tr}^{\rm max}(\Theta(\pi^\dag, \nu))\otimes \wt{F} = \set{ \mca{O}_{\rm Spr}(j_{W_\tnu}^W(\varepsilon_\tnu)) }
\end{equation}
and  
\begin{equation} \label{E:main2}
c_{\mca{O}} = \angb{j_{W_\tnu}^W(\varepsilon_\tnu)}{ \varepsilon_W \otimes \sigma^\msc{X} }_W
\end{equation}
for every orbit $\mca{O} \in \mca{N}_{\rm tr}^{\rm max}(\Theta(\pi^\dag, \nu))$.
\end{conj}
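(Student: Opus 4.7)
The plan is to combine the theorem of Moeglin--Waldspurger and its extensions by Varma and Patel---which identifies $\mca{N}_{\rm tr}^{\rm max}(\pi)$ with the maximal nilpotent orbits supporting nontrivial generalized Whittaker functionals---with a direct analysis of the generalized Whittaker spaces of $\Theta(\pi^\dag,\nu)$ inherited from the principal series $I(\pi^\dag,\nu)$. Heuristically, the Langlands quotient construction, read through the Springer correspondence, should pick out exactly the Macdonald summand $j_{W_\tnu}^W(\varepsilon_\tnu)$ of the full induced $W$-module of Whittaker functionals.

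First, I would analyze $\Wh_\mca{O}(I(\pi^\dag,\nu))$ for every nilpotent orbit $\mca{O}$ via a Bruhat-type decomposition of the induced representation. Each Weyl cell indexed by $w\in W$ contributes a space of functionals whose nonvanishing depends on the stabilizer of the Whittaker datum and on $w\cdot\nu$, and the $w$-dependence should package into a subspace of $\Ind_{W_\tnu}^W(\varepsilon_\tnu)$. The subspace that survives in the passage to $\Theta(\pi^\dag,\nu)$ is to be identified with $j_{W_\tnu}^W(\varepsilon_\tnu)$ by invoking the characterization of $j$-induction through leading fake-degree terms of the reflection representation of $W$, together with the standard action of intertwining operators on Langlands quotients at exceptional parameters. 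Applying the Springer correspondence then translates this $W$-representation into the orbit $\mca{O}_{\rm Spr}(j_{W_\tnu}^W(\varepsilon_\tnu)) \subset \mfr{g}\otimes\wt{F}$, yielding \eqref{E:main1}.

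For the multiplicity formula \eqref{E:main2}, the total generic Whittaker space of $I(\pi^\dag,\nu)$ naturally carries a $W$-module structure whose character should match $\varepsilon_W\otimes\sigma^\msc{X}$: the permutation part arises from the twisted Weyl action on $\msc{X}_{Q,n}=Y/Y_{Q,n}$, which parametrizes a natural basis via the genuine central characters of $Z(\wt{T})$ appearing inside $\pi^\dag$, while the sign twist accounts for the Harish-Chandra/Springer normalization of leading Whittaker functionals. The dimension of the surviving subspace in $\Theta(\pi^\dag,\nu)$ is then $\angb{j_{W_\tnu}^W(\varepsilon_\tnu)}{\varepsilon_W\otimes\sigma^\msc{X}}_W$, which by Moeglin--Waldspurger equals $c_\mca{O}$ for each $\mca{O}\in \mca{N}_{\rm tr}^{\rm max}(\Theta(\pi^\dag,\nu))$.

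The main obstacle will be the Langlands quotient step: one must show that every $W$-irreducible constituent of $\Ind_{W_\tnu}^W(\varepsilon_\tnu)$ other than $j_{W_\tnu}^W(\varepsilon_\tnu)$ is killed in the passage from $I(\pi^\dag,\nu)$ to $\Theta(\pi^\dag,\nu)$. For covering groups this requires a precise computation of the normalized intertwining operators at exceptional parameters, whose leading coefficients are Gauss sums attached to $W$-orbits on $\msc{X}_{Q,n}$; persistency of $\wt{G}$ is needed precisely to ensure the $W_\tnu$-action on $\Hom(\tilde{Y}_{Q,n}^{sc},\R)$ matches the action on $\msc{X}_{Q,n}$ that governs these Gauss sums, so the truncation from the full induced $W$-module to its $j$-summand is compatible with the quotient to $\Theta(\pi^\dag,\nu)$. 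A secondary subtlety is that the Springer correspondence is formulated over the algebraic closure $\wt{F}$, so \eqref{E:main1} is naturally stated after base change $\otimes\wt{F}$; sharpening it to an $F$-rational statement would demand further Galois-cohomological input beyond the scope of the present approach.
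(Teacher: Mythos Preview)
The statement you are attempting to prove is Conjecture~\ref{C:main}; the paper does \emph{not} prove it in general. The paper verifies it only in special situations: the generic case in \S\ref{S:generic} (Theorem~\ref{T:generic}), and for particular covers of $\Sp_{2r}$, $\GL_r$, $\SO_{2r+1}$, $\GSpin_{2r+1}$ in \S\ref{S:p}, by direct computation of $\Phi_{\tnu}$, the Macdonald representation, and the Springer correspondence, and then matching against existing results on wavefront sets (Friedberg--Ginzburg, Cai, Leslie, Bump--Friedberg--Ginzburg, Kaplan). So there is no ``paper's own proof'' to compare your proposal against.

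Your outline is a plausible heuristic strategy, but several of its key steps are genuine open problems, not routine verifications. The most serious gap is precisely the one you flag: the assertion that the passage from $I(\pi^\dag,\nu)$ to its Langlands quotient $\Theta(\pi^\dag,\nu)$ kills every $W$-constituent of $\Ind_{W_{\tnu}}^W(\varepsilon_{\tnu})$ except $j_{W_{\tnu}}^W(\varepsilon_{\tnu})$. Nothing in the literature on intertwining operators for covering groups gives this; the leading-term characterization of $j$-induction via fake degrees does not by itself translate into a statement about which Whittaker functionals survive a quotient map. A second gap is the claim that the degenerate Whittaker space of the full principal series carries a natural $W$-module structure with character $\varepsilon_W\otimes\sigma^\msc{X}$: for the \emph{generic} Whittaker space this is close to what the paper uses (see the discussion around \eqref{E:dimWh}), but for arbitrary $\mca{O}$ there is no established $W$-action with a known character formula on $\Wh_\mca{O}(I(\pi^\dag,\nu))$ for covers. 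Until these two points are settled, the proposal remains a program rather than a proof, which is consistent with the conjectural status the paper assigns to the statement.
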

The various measures involved in the local character expansion \eqref{E:char} are chosen as in \cite{HC1, MW1, Var0, Li3, Pate}. We also remark the following:
\begin{enumerate}
\item[$\bullet$] It is part of Conjecture \ref{C:main} that elements in $\mca{N}_{\rm tr}^{\rm max}(\Theta(\pi^\dag, \nu))$ all lie in one single $\wt{F}$-nilpotent orbit, and it is a delicate issue to determine the $F$-nilpotent classes in $\mca{N}_{\rm tr}^{\rm max}$. For arbitrary irreducible representation of the linear group $\wt{G}=G$, this was first conjectured by M\oe glin and Waldspurger, and it is expect to hold for representations of $\wt{G}$ besides $\Theta(\pi^\dag, \nu)$.
\item[$\bullet$] Implicit in Conjecture \ref{C:main} is that for persistent covers,  the equalities \eqref{E:main1} and \eqref{E:main2} are independent of the choice of distinguished representation $\pi^\dag$ of $\wt{T}$, and also independent of the nontrivial $\psi_\natural$, which is used in giving the character expansion \eqref{E:char}. In fact, if $\wt{G}^\vee$ has trivial center, which in particular implies that $\wt{G}$ is saturated, then there is a unique distinguished representation $\pi^\dag$ of $\wt{T}$, see the discussion in \S \ref{SS:Theta}. 
\item[$\bullet$] If $\wt{G}$ is not persistent, then \eqref{E:main1} is still expected to hold, but the determination of the set $\mca{N}_{\rm tr}^{\rm max}(\Theta(\pi^\dag, \nu))$ is more complicated, involving subtle relations between $\pi^\dag$ and $\psi_\natural$. This already occurs for the double cover of $\SL_2$, see \cite{Wal80}. On the other hand, for non-persistent covers, the equality \eqref{E:main2} no longer holds, and the quantitative nature of $c_\mca{O}$ has to be investigated further.
\end{enumerate}

The first example for Conjecture \ref{C:main} is when $n=1$ and thus $\wt{G}=G$ is a linear group. In this case, $\Theta(\pi^\dag, \nu)$ is a one-dimensional character of $G$. Since $G$ is saturated, we can take $\tnu = \nu$. On the other hand, one has $\Phi_\nu =\Phi$ and thus $j_{W_\nu}^W(\varepsilon_\nu) = \varepsilon_W$. This gives 
$$\mca{O}_{\rm Spr}(j_{W_\nu}^W(\varepsilon_\nu)) = \mca{O}_0,$$
as expected. In this case, $\msc{X}_{Q,n}=\set{0}$ and thus $\sigma^\msc{X} = \mbm{1}$. It follows 
$$c_{\mca{O}_0} = 1 =\angb{\varepsilon_W}{ \varepsilon_W \otimes \mbm{1}}_W = \angb{j_{W_\tnu}^W(\varepsilon_\tnu)}{ \varepsilon_W \otimes \sigma^\msc{X} }_W.$$

As another extreme example, we take an unramified $\Theta(\pi^\dag, \nu)$ with $n \gg r$, where $r$ is the semisimple rank of $G$. In this case, $\Theta(\pi^\dag, \nu)$ is $\psi$-generic for $\mfr{f}(\psi)=O_F$. In fact, it follows from \cite[Proposition 6.2]{Ga6} that
the equality
\begin{equation} \label{E:dimWh}
\dim \Wh_\psi(\Theta(\pi^\dag, \nu)) = \angb{\varepsilon_W}{ \sigma^\msc{X}}_W
\end{equation}
holds for every persistent cover. Here $\angb{\varepsilon_W}{ \sigma^\msc{X}}_W$ also equals to the 
number of free $W$-orbits in $\msc{X}_{Q,n}$ with respect to the twisted action $w[-]$.  We then have a regular $F$-nilpotent orbit $\mca{O}$ (depending on both $\psi_\natural$ and $\psi$, see \cite[Page 427]{MW1}) that
$$c_{\mca{O}} = \dim \Wh_\psi(\Theta(\pi^\dag, \nu)).$$
In this case, $\phi(\tnu) \in X^{sc}\otimes \R$ lies in the interior of the alcove with respect to the affine Weyl group $X^{sc} \rtimes W$ acting on $X^{sc}\otimes \R$. Thus $W_\tnu=\set{1}$ and $j_{W_\tnu}^W(\varepsilon_\tnu) = \mbm{1}$. This shows that
$$c_\mca{O} =\angb{\varepsilon_W}{\sigma^\msc{X}}_W =  \angb{j_{W_\tnu}^W(\varepsilon_\tnu)}{ \varepsilon_W \otimes \sigma^\msc{X} }_W,$$ as desired.  In \S \ref{S:generic}, we will study in more details the case when $\Theta(\pi^\dag, \nu)$ is $\psi$-generic and verify \eqref{E:main1} and \eqref{E:main2} for $\mca{O}$ as above.

\subsection{A further generalization}
We briefly discuss about a further generalization of Conjecture \ref{C:main} to all irreducible constituents of a regular principal series in the unramified case. Thus, we continue to assume that $F$ is $p$-adic with $p\nmid n$. Consider $\nu \in X\otimes \R$ satisfying the following:
\begin{enumerate}
\item[--] $\nu$ is regular, that is, its stabilizer subgroup of $W$ is trivial,
\item[--] the set $\Phi(\nu):=\set{\alpha\in \Phi: \nu(\alpha_{Q,n}^\vee) =1}$ is a subset of $\Delta$.
\end{enumerate}
Taking $\pi^\dag$ to be an unramified distinguished representation of $\wt{T}$, we have a regular unramified genuine principal series $I(\pi^\dag, \nu)$ of $\wt{G}$.
One has
\begin{equation} \label{E:Rod}
I(\pi^\dag, \nu)^{\rm ss} = \bigoplus_{S \subset \Phi(\nu)} \pi_S,
\end{equation}
where the left hand side denotes the semisimplification of $I(\pi^\dag, \nu)$. The decomposition is multiplicity-free and the irreducible constituent $\pi_S$ is characterized by its Jacquet module, see \cite{Rod4} and \cite[\S 3]{Ga6}. For example, if $\Phi(\nu) =\Delta$, then $\pi_\Delta = \Theta(\pi^\dag, \nu)$ and $\pi_\emptyset$ is a covering analogue of the Steinberg representation.

For every $S \subset \Phi(\nu) \subset \Delta$, let $\Phi(S) \subset \Phi$ be the root subsystem with simple roots being $S$. Denote by
$$W(S) \subset W$$
the subgroup generated by elements in $S$. Let $M_S \subset G$ be the Levi subgroup associated with $S$, with Lie algebra denoted by $\mfr{m}_S$. Let $\tnu \in X\otimes \R$ be a saturation of $\nu$ and denote
$$W_\tnu^S:=\text{the integral Weyl subgroup of $W(S)$ associated with }\tnu.$$
Let $\varepsilon_\tnu^S$ be the sign character of $W_\tnu^S$. We have the two Macdonald representations
$$j_{W_\tnu^S}^W(\varepsilon_\tnu^S) \in \Irr(W) \text{ and } \ j_{W_\tnu^S}^{W(S)}(\varepsilon_\tnu^S) \in \Irr(W(S)).$$
For every nilpotent orbit $\mca{O} \subset \mfr{m}_S\otimes \wt{F}$, one has an induced nilpotent orbit ${\rm Ind}_{\mfr{m}_S\otimes \wt{F}}^{\mfr{g} \otimes \wt{F}}(\mca{O}) \subset \mfr{g} \otimes \wt{F}$, see \cite{LuSp1} or \cite[Chapter 7]{CM}. Moreover, the $j$-induction on the representation side and induction on the nilpotent orbit side from parabolic subgroups are compatible via the Springer correspondence, see \cite{LuSp1}. Thus, we have
$$\mca{O}_{\rm Spr}(j_{W_\tnu^S}^W(\varepsilon_\tnu^S)) ={\rm Ind}_{\mfr{m}_S\otimes \wt{F}}^{\mfr{g} \otimes \wt{F}}( \mca{O}_{\rm Spr}(j_{W_\tnu^S}^{W(S)}(\varepsilon_\tnu^S))) \subset \mfr{g}\otimes \wt{F}.$$

\begin{conj} \label{C:main-g} 
Let $\wt{G}$ be a persistent $n$-fold cover, and let $\nu\in X\otimes \R$ be a regular element with $\Phi(\nu) \subset \Delta$. Consider the regular unramified principal series $I(\pi^\dag, \nu)$.  Then for every constituent $\pi_S$ of $I(\pi^\dag, \nu)$ with $S \subset \Phi(\nu)$, one has
$$\mca{N}_{\rm tr}^{\rm max}(\pi_S) \otimes \wt{F} = \set{ \mca{O}_{\rm Spr}(j_{W_\tnu^S}^W(\varepsilon_\tnu^S)) },$$
where $\tnu$ is a saturation of $\nu$.
\end{conj}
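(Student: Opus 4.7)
The plan is to reduce Conjecture \ref{C:main-g} to Conjecture \ref{C:main} applied to a theta representation on a suitable Levi subgroup $\wt{M_S}$ of $\wt{G}$, and then to propagate the Levi-level wavefront orbit to $\wt{G}$ using two pieces of functoriality: the M\oe glin--Waldspurger-type inheritance of top-layer wavefront sets under parabolic induction (extended to covers in \cite{Li3, Var0}), and the Lusztig--Spaltenstein compatibility of $j$-induction on Weyl group representations with parabolic induction of nilpotent orbits through the Springer correspondence, which is recorded just above the conjecture.

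First I would realize $\pi_S$ as an irreducible subquotient of a parabolically induced representation whose inducing datum on $\wt{M_S}$ is (up to a one-dimensional twist supported along $\Delta \setminus S$) the theta representation $\Theta_{M_S}(\pi^\dag, \nu|_{M_S})$. The hypothesis $S \subset \Phi(\nu)$ guarantees that $\nu|_{M_S}$ is exceptional for every simple root of $M_S$, so that $\Theta_{M_S}(\pi^\dag, \nu|_{M_S})$ is a legitimate theta representation of $\wt{M_S}$ to which Conjecture \ref{C:main} applies. The exact identification of $\pi_S$ as the correct subquotient will be carried out by a Jacquet-module calculation, adapting Rodier's structure theory of regular principal series \cite{Rod4} to the covering setting as in \cite[\S 3]{Ga6}: the Jacquet module along $\wt{B}$ pins down uniquely the constituent indexed by $S$ in the decomposition \eqref{E:Rod}. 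Conjecture \ref{C:main} on the smaller cover $\wt{M_S}$ then furnishes
\[
\mca{N}_{\rm tr}^{\rm max}(\Theta_{M_S}(\pi^\dag, \nu|_{M_S})) \otimes \wt{F} = \set{\mca{O}_{\rm Spr}(j_{W_\tnu^S}^{W(S)}(\varepsilon_\tnu^S))}.
\]
Parabolic inheritance produces $\text{Ind}_{\mfr{m}_S}^{\mfr{g}}$ of this orbit on $\wt{G}$, and the Lusztig--Spaltenstein identity already recorded above the conjecture converts it to $\mca{O}_{\rm Spr}(j_{W_\tnu^S}^W(\varepsilon_\tnu^S))$, matching the right-hand side of the claim.

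The main obstacle I anticipate is the precise subquotient identification: one must show that among the constituents of the induced representation, the particular $\pi_S$ from \eqref{E:Rod} is exactly the one carrying the induced orbit as its maximal wavefront, rather than some other piece whose leading orbit is strictly smaller. In the generic case already handled in \S \ref{S:generic}, Whittaker models side-step this difficulty completely. For arbitrary $S$ one needs either an induction on the semisimple rank combined with the multiplicity-one assertion in \eqref{E:Rod} to exclude the other subquotients from absorbing the top orbit, or a passage to an affine-Hecke-algebra description of the principal series constituents and a Kazhdan--Lusztig cell computation in the spirit of Ciubotaru--Mason-Brown--Okada for linear $p$-adic groups. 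Either route will also demand formulating the parabolic-induction inheritance of wavefront sets uniformly for covers, somewhat beyond what is presently available in \cite{Li3, Var0}.
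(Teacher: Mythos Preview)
The statement you are addressing is a \emph{conjecture} in the paper, not a theorem; the paper offers no proof of it. Immediately after stating Conjecture \ref{C:main-g}, the authors only record two sanity checks: that the case $S=\Phi(\nu)=\Delta$ collapses to Conjecture \ref{C:main} (itself open), and that the linear case $n=1$ recovers a theorem of M\oe glin--Waldspurger. There is therefore no ``paper's own proof'' to compare your proposal against.

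As a proof \emph{strategy}, your reduction is natural but incomplete in ways beyond those you already flag. First, the target of your reduction---Conjecture \ref{C:main} for $\wt{M_S}$---is itself open, so at best you are proposing a reduction of one conjecture to another, not a proof. Second, Conjecture \ref{C:main} is stated only for \emph{persistent} covers, and you have not argued that the Levi cover $\wt{M_S}$ inherits persistence from $\wt{G}$; this is not automatic, as persistence is a combinatorial condition on $Y/Y_{Q,n}^{sc}$ versus $Y/Y_{Q,n}$ that need not restrict cleanly to Levi sublattices. Third, the notion of saturation $\tnu$ depends on the numbers $\tilde{n}_\alpha$, which are defined relative to the ambient lattice $Y_{Q,n}$ of $\wt{G}$; you would need to check that restricting to $M_S$ does not alter these, so that the $W_\tnu^S$ appearing in the conjecture really matches the integral Weyl group arising from Conjecture \ref{C:main} applied intrinsically to $\wt{M_S}$. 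Finally, the obstacle you do acknowledge---pinning down which subquotient of the induced representation carries the top orbit---is genuine and is precisely why the statement remains conjectural; your proposed workarounds (induction on rank, or Hecke-algebra cell computations) are speculative and not developed in the paper.
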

If $S=\Phi(\nu) =\Delta$, then Conjecture \ref{C:main-g} becomes part of Conjecture \ref{C:main}, since $\pi_\Delta =\Theta(\pi^\dag, \nu)$ in this case. On the other hand, if $n=1$ and $\wt{G}=G$ is linear group, then Conjecture \ref{C:main-g} asserts that
$$\mca{N}_{\rm tr}^{\rm max}(\pi_S) \otimes \wt{F} =\set{{\rm Ind}_{\mfr{m}_S\otimes \wt{F}}^{\mfr{g}\otimes \wt{F}}(0)},$$
which was proved by M\oe glin and Waldspurger in \cite[Proposition II.1.3]{MW1}.

\begin{rmk} Conjecture \ref{C:main} was stated for $\Theta(\pi^\dag, \nu)$ in the tame case, but with $\pi^\dag$ not necessarily unramified. On the other hand, we restrict to unramified $\pi^\dag$ in Conjecture \ref{C:main-g}, since Rodier's structural decomposition \eqref{E:Rod} of $I(\pi^\dag, \nu)$ was analyzed and generalized only for unramified data in \cite{Ga6}. However, it is expected that $\eqref{E:Rod}$ holds for regular $I(\pi^\dag, \nu)$ in general (even in the non-tame setting) which, once established, will enable us to remove the constraint of $\pi^\dag$ being unramified in Conjecture \ref{C:main-g}.
\end{rmk}

\section{Generic $\Theta(\pi^\dag, \nu)$} \label{S:generic}
In this section, we show that certain parts of Conjecture \ref{C:main} hold for a $\psi$-generic $\Theta(\pi^\dag, \nu)$ in the unramified case. We assume 
$$\mfr{f}(\psi) = O_F.$$
Essentially, we rely on the results proved in \cite{Ga6} regarding the criterion for $\Theta(\pi^\dag, \nu)$ to be generic. More precisely, it follows from \eqref{E:dimWh} for a persistent cover $\wt{G}$ that the following two assertions are equivalent:
\begin{enumerate}
\item[(i)] the representation $\Theta(\pi^\dag, \nu)$ is $\psi$-generic, and thus $\mca{N}_{\rm tr}^{\rm max}(\Theta(\pi^\dag, \nu)) \otimes \wt{F} = \set{\mca{O}_{\rm reg}}$,
\item[(ii)] the number $\angb{\varepsilon_W}{ \sigma^\msc{X} }_W$, which is equal to the number of free $W$-orbits in $\msc{X}_{Q,n}$, is at least one.
\end{enumerate}
We note that, however, property (ii) here concerns $Y/Y_{Q,n}$ on the cocharacter lattice side, while Conjecture \ref{C:main} relies on the element $\tnu \in X\otimes \R$ from the character lattice side. It is thus sufficient to establish the ``equivalence" between the two criteria for $\Theta(\pi^\dag, \nu)$ to be generic, arising from (ii) above and that predicted by Conjecture \ref{C:main}.

Recall that $\set{\omega_\alpha}_{\alpha\in \Delta} \subset X\otimes \R$ denote the fundamental weights, and $\rho = \sum_{\alpha\in \Delta} \omega_\alpha$. Similarly, $\set{\omega_\alpha^\vee}_{\alpha \in \Delta} \subset Y\otimes \R$ denote the fundamental coweights with $\rho^\vee=\sum_{\alpha\in \Delta} \omega_\alpha^\vee$. Define a function 
$$f_X: \Phi_+^\vee \longrightarrow \Q$$
by 
$$f_X(\beta^\vee) :=\angb{\phi(\tnu)}{\beta^\vee}= \sum_{\alpha\in \Delta}\angb{\omega_\alpha/\tilde{n}_\alpha}{\beta^\vee}.$$
It is clear that $W_\tnu = \set{1}$ if and only if ${\rm Im}(f_X) \cap \Z =\emptyset$.
On the other hand, we also define
$$f_Y: \Phi_+ \longrightarrow \Q$$
by 
$$f_Y(\beta) = \angb{\rho^\vee}{\tilde{\beta}_{Q,n}} = \sum_{\alpha\in \Delta} \angb{\omega_\alpha^\vee/\tilde{n}_\beta}{\beta}.$$
The $W$-orbit of $2\rho^\vee$ in $\msc{X}_{Q,n}$ is free if and only if  ${\rm Im}(f_Y) \cap \Z =\emptyset$.

We will prove the equality
$$f_X(\Phi_+^\vee) = f_Y(\Phi_+)$$ 
on a case-by-case basis for the root system type of $G$. The analysis will also determine explicitly some saturated and persistent covers.

\subsection{Simply-laced type with rank $r\ge 2$} It follows from Lemma \ref{L:clas} that $\tilde{n}_\alpha = n_\alpha$ for all $\alpha \in \Delta$, and since $G$ is simply-laced, the map $\alpha \mapsto n_\alpha$ is constant on $\Delta$. Also, since the root system $\Phi^\vee$ is of the same type as $\Phi$, we have the following.

\begin{lm} \label{L:sim-lac}
If the root system of $G$ is of simply-laced type, then
$${\rm Im}(f_X) = \frac{[1, {\rm ht}(\alpha_0)]}{n_\alpha} = {\rm Im}(f_Y),$$
where ${\rm ht}(\alpha_0)$ denotes the height of the highest root $\alpha_0$ of $G$.
\end{lm}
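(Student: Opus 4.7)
The plan is a direct computation that reduces both $f_X$ and $f_Y$ to the height function on positive (co)roots, suitably rescaled, and then invokes that in simply-laced type the set of heights of positive roots and that of positive coroots agree.

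First, I would set $n_0 := n_\alpha$, noting that under the simply-laced assumption with $r \ge 2$ Lemma \ref{L:clas} guarantees $\tilde{n}_\alpha = n_\alpha$ for every $\alpha \in \Delta$, and that $n_\alpha$ is independent of $\alpha \in \Delta$ since $Q$ is Weyl-invariant and all simple coroots lie in a single $W$-orbit. So $\tilde{n}_\beta = n_0$ for every $\beta \in \Phi$ as well. This converts the two expressions to
\[
f_X(\beta^\vee) = \frac{1}{n_0}\sum_{\alpha \in \Delta} \angb{\omega_\alpha}{\beta^\vee},
\qquad
f_Y(\beta) = \frac{1}{n_0}\sum_{\alpha \in \Delta} \angb{\omega_\alpha^\vee}{\beta}.
\]

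Next, I would exploit the duality between simple (co)roots and fundamental (co)weights. Since $\angb{\omega_\alpha}{\alpha'^\vee} = \delta_{\alpha,\alpha'}$ for $\alpha,\alpha' \in \Delta$, writing a positive coroot $\beta^\vee = \sum_{\alpha \in \Delta} c_\alpha \alpha^\vee$ with $c_\alpha \in \Z_{\ge 0}$ gives $\sum_\alpha \angb{\omega_\alpha}{\beta^\vee} = \sum_\alpha c_\alpha = {\rm ht}(\beta^\vee)$. Analogously $\sum_\alpha \angb{\omega_\alpha^\vee}{\beta} = {\rm ht}(\beta)$. Hence
\[
f_X(\beta^\vee) = \frac{{\rm ht}(\beta^\vee)}{n_0},
\qquad
f_Y(\beta) = \frac{{\rm ht}(\beta)}{n_0}.
\]

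To finish, I would use that the height function on $\Phi_+$ takes exactly the integers $1,2,\dots,{\rm ht}(\alpha_0)$, each at least once (this is standard: the positive roots are stratified by height and each stratum is nonempty up to the highest root). In the simply-laced case the bijection $\beta \leftrightarrow \beta^\vee$ identifies $\Phi$ with $\Phi^\vee$ as abstract root systems and preserves height, so the same conclusion holds for $\Phi_+^\vee$ with highest coroot $\alpha_0^\vee$ satisfying ${\rm ht}(\alpha_0^\vee) = {\rm ht}(\alpha_0)$. Therefore both images equal $[1,{\rm ht}(\alpha_0)]/n_0$, yielding the stated equality.

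The computation is routine once the simply-laced and rank $\ge 2$ hypotheses are used to kill the distinction between $n_\alpha$ and $\tilde n_\alpha$ and to make $n_\alpha$ constant across $\Delta$; the only point requiring care is justifying that the height function on $\Phi_+$ surjects onto $[1,{\rm ht}(\alpha_0)]$, but this is a classical fact about irreducible root systems (and reduces to the irreducible case since $G$ has rank at least two with a single orbit of simple roots).
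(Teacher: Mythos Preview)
Your proof is correct and follows essentially the same approach as the paper. The paper itself gives no formal proof of this lemma beyond the sentence preceding it, which records exactly the three ingredients you use: Lemma~\ref{L:clas} gives $\tilde{n}_\alpha = n_\alpha$, simply-lacedness makes $n_\alpha$ constant on $\Delta$, and $\Phi^\vee$ being of the same type as $\Phi$ identifies the height ranges; you have simply written out the direct computation that the paper leaves implicit.
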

We discuss briefly the saturated and persistent covers $\wt{G}^{(n)}$ of the almost simple group $G$ associated with 
$$Q(\alpha^\vee)=1,$$
where $\alpha^\vee \in \Delta^\vee$ is any simple coroot. For simplicity, we may sometimes restrict to the case $n=2$ only.

First, all saturated covers of such $\wt{G}^{(n)}$ are given in Table 1 (see \cite[\S 2.7]{We6})
\begin{table}[!htbp]  \label{T:1}
\caption{Saturated covers for simply-laced almost simple groups}
\vskip 5pt
\begin{tabular}{|c|c|c|c|c|c|c|}
\hline
 & $\wt{\SL}_{r+1}^{(n)}$  &  $\wt{\Spin}_{2r}^{(n)}, r\gest 3$ &     $\wt{E}_6^{(n)}$  &  $\wt{E}_7^{(n)}$ & $\wt{E}_8^{(n)}$  \\
\hline
condition & $n$ \text{ such that} & odd $n$  & \text{$n$ such that } & odd $n$  & all $n$   \\
on $n$ & $\gcd(n, r+1)=1$  & & $3\nmid n$ &  &  \\
\hline
\end{tabular}
\end{table}

We also tabulate the persistent double covers $\wt{G}^{(2)}$, as given in Table 2. Recall if a cover is saturated, then it is persistent. Now we explain briefly the part of Table 2 not covered in Table 1 as follows. Thus, we assume $\wt{G}$ is either $\wt{\SL}_{r+1}^{(2)}$ with $r$ odd, or $\wt{\Spin}_{2r}^{(2)}$ and $\wt{E}_7^{(2)}$. Let $\Omega \subset \Delta$ be the special subset as in \cite[\S 16.1.1]{GG} such that 
$$e_\Omega = \sum_{\alpha \in \Omega} \alpha^\vee \in Y_{Q,2} - Y_{Q,2}^{sc}.$$
More precisely, using the labelling as in Bourbaki \cite{Bou} (which is however different from that in \cite[\S 16.1.1]{GG}), if $\wt{G}=\wt{\SL}_{r+1}^{(2)}$, then we take $\Omega=\set{\alpha_i: 1\lest i \lest r, i \text{ is odd}}$; for $\wt{G}= \wt{\Spin}_{2r}^{(2)}$, we take $\Omega=\set{\alpha_{r-1}, \alpha_r}$; while for $\wt{G}=\wt{E}_7^{(2)}$, we take $\Omega=\set{\alpha_2, \alpha_5, \alpha_7}$.
Setting $w_\Omega = \prod_{\alpha \in \Omega} w_\alpha$, which does not depend on the order of elements in $\Omega$, we have
$$w_\Omega[0] - 0 = e_\Omega \in Y_{Q,2} - Y_{Q,2}^{sc}.$$
This shows that such $\wt{G}$ is not persistent. This justifies Table 2.

\begin{table}[!htbp]  \label{T:2}
\caption{Double covers of simply-laced almost simple groups}
\vskip 5pt
\begin{tabular}{|c|c|c|c|c|c|c|c|}
\hline
 & $\wt{\SL}_{r+1}^{(2)},$  & $\wt{\SL}_{r+1}^{(2)},$ &  $\wt{\Spin}_{2r}^{(2)}$ &     $\wt{E}_6^{(2)}$  &  $\wt{E}_7^{(2)}$ & $\wt{E}_8^{(2)}$  \\
 & $r$ even  & $r$ odd &   &     &  &   \\
\hline
saturated or & saturated & not & not   & saturated & not  & saturated   \\
persistent? &   & persistent & persistent  &  & persistent  &  \\
\hline
\end{tabular}
\end{table}

\subsection{Type $B_r, r\gest 3$} \label{SS:Br}
Following Bourbaki's notation \cite[Page 267]{Bou}, we have the Dynkin diagram for the root system of type $B_r, r\gest 3$ as follows.
$$ \qquad
\begin{picture}(4.7,0.2)(0,0)
\put(1,0){\circle{0.08}}
\put(1.5,0){\circle{0.08}}
\put(2,0){\circle{0.08}}
\put(2.5,0){\circle{0.08}}
\put(3,0){\circle{0.08}}
\put(1.04,0){\line(1,0){0.42}}
\multiput(1.55,0)(0.05,0){9}{\circle*{0.02}}
\put(2.04,0){\line(1,0){0.42}}
\put(2.54,0.015){\line(1,0){0.42}}
\put(2.54,-0.015){\line(1,0){0.42}}
\put(2.72,-0.04){$>$}
\put(1,0.1){\footnotesize $\alpha_1$}
\put(1.5,0.1){\footnotesize $\alpha_2$}
\put(2,0.1){\footnotesize $\alpha_{r-2}$}
\put(2.5,0.1){\footnotesize $\alpha_{r-1}$}
\put(3,0.1){\footnotesize $\alpha_r$}
\end{picture}
$$
\vskip 10pt
We partition $\Phi_+$ into 
$$\Phi_+ = \Phi_{\rm +, I} \sqcup \Phi_{\rm +, II} \sqcup \Phi_{\rm +, III},$$
where 
$$\Phi_{\rm +, I} = \set{\sum_{i\lest k \lest r} \alpha_k: 1\lest i \lest r}, \ \Phi_{\rm +, II} = \set{\sum_{i\lest k <j} \alpha_k : 1\lest i < j \lest r} $$
and
$$\Phi_{\rm +, III} =\set{\sum_{i\lest k <j} \alpha_k + 2\cdot \sum_{j\lest k \lest r} \alpha_k: 1\lest i < j \lest r}.$$
On the other hand, one can partition $\Phi_+^\vee$ into 
$$\Phi_+^\vee = \Phi_{\rm +, I}^\vee \sqcup \Phi_{\rm +, II}^\vee \sqcup \Phi_{\rm +, III}^\vee,$$
where 
$$\Phi_{\rm +, I}^\vee =  \set{\sum_{i\lest k <j} \alpha_k^\vee : 1\lest i < j \lest r} , \ \Phi_{\rm +, III}^\vee =\set{2\sum_{i\lest k < r} \alpha_k^\vee + \alpha_r^\vee: 1\lest i \lest r}$$
and
$$\Phi_{\rm +, II}^\vee = \set{\sum_{i\lest k <j} \alpha_k^\vee + 2\cdot \sum_{j\lest k < r} \alpha_k^\vee + \alpha_r^\vee: 1\lest i < j \lest r}.$$
There are two cases for the set $\set{n_\alpha, \alpha \in \Delta}$: either $n_{\alpha}$ is constant on $\alpha\in \Delta$, or $n_{\alpha_i} =2 n_{\alpha_r}$ for all $1\lest i < r$.

\begin{lm} \label{L:Br}
Assume the root system of $G$ is of type $B_r, r\gest 3$. If $n_{\alpha_i} = 2n_{\alpha_r}$ for all $1\lest i<r$, then 
$$
f_X(\Phi_{\rm +, I}^\vee) = f_Y(\Phi_{\rm +, II}), \ f_X(\Phi_{\rm +, II}^\vee) = f_Y(\Phi_{\rm +, III}), \ f_X(\Phi_{\rm +, III}^\vee) = f_Y(\Phi_{\rm +, I});
$$
in particular, 
$$f_X(\Phi_+^\vee) =\frac{1}{2n_{\alpha_r}} [1, 2r] =  f_Y(\Phi_+).$$
If $n_\alpha$ is constant on $\alpha\in \Delta$, then we have the equalities
$$f_X(\Phi_+^\vee) = \frac{1}{n_\alpha} [1, 2r-1] = f_Y(\Phi_+).$$
\end{lm}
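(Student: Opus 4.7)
The plan is a direct case analysis, based on the explicit parametrizations of $\Phi_+$ and $\Phi_+^\vee$ recorded just before the statement. First I would note that since $B_r$ is not of type $C_r$, Lemma \ref{L:clas} gives $i_\alpha = 1$ and hence $\tilde{n}_\alpha = n_\alpha$ for every $\alpha \in \Phi$. Unwinding the definitions via $\angb{\rho^\vee}{\beta} = {\rm ht}(\beta)$ and $\angb{\omega_\alpha}{\beta^\vee} = c_\alpha(\beta^\vee)$, where $c_\alpha(\beta^\vee)$ denotes the coefficient of $\alpha^\vee$ in the expansion of $\beta^\vee$ in simple coroots, one obtains
\[
f_Y(\beta) = \frac{{\rm ht}(\beta)}{n_\beta}, \qquad f_X(\beta^\vee) = \sum_{\alpha \in \Delta} \frac{c_\alpha(\beta^\vee)}{n_\alpha}.
\]

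Next I would substitute into these two expressions the data from the partitions given in \S \ref{SS:Br}. Under the duality between $B_r$ and its coroot system of type $C_r$, the short roots $\Phi_{+,I}$ are paired with the long coroots $\Phi^\vee_{+,III}$, while the long-root families $\Phi_{+,II}, \Phi_{+,III}$ are paired with the short-coroot families $\Phi^\vee_{+,I}, \Phi^\vee_{+,II}$. The coefficients $c_\alpha(\beta^\vee)$ and heights ${\rm ht}(\beta)$ can be read off directly from the explicit formulas; the entire argument then reduces to bookkeeping.

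In the first case $n_{\alpha_i} = 2n_{\alpha_r}$ for $i < r$, the short simple coroot $\alpha_r^\vee$ is weighted by $1/n_{\alpha_r}$ and every other simple coroot by $1/(2n_{\alpha_r})$. A direct substitution yields
\[
f_X(\Phi^\vee_{+,I}) = \tfrac{[1, r-1]}{2n_{\alpha_r}}, \quad f_X(\Phi^\vee_{+,II}) = \tfrac{[3, 2r-1]}{2n_{\alpha_r}}, \quad f_X(\Phi^\vee_{+,III}) = \tfrac{\set{2, 4, \dots, 2r}}{2n_{\alpha_r}},
\]
while a parallel computation using heights gives
\[
f_Y(\Phi_{+,II}) = \tfrac{[1, r-1]}{2n_{\alpha_r}}, \quad f_Y(\Phi_{+,III}) = \tfrac{[3, 2r-1]}{2n_{\alpha_r}}, \quad f_Y(\Phi_{+,I}) = \tfrac{\set{2, 4, \dots, 2r}}{2n_{\alpha_r}}.
\]
The three equalities in the statement follow term by term, and their union is $\tfrac{1}{2n_{\alpha_r}}[1, 2r]$ using $r \gest 3$.

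In the second case $n_\alpha$ is constant on $\Delta$, so both $f_X(\beta^\vee)$ and $f_Y(\beta)$ collapse to ${\rm ht}(\beta^\vee)/n_\alpha$ and ${\rm ht}(\beta)/n_\alpha$ respectively, and the claim reduces to checking that the union of heights in $\Phi_+^\vee$ and in $\Phi_+$ both exhaust the interval $[1, 2r-1]$; this follows from a one-line tabulation. The only mildly delicate point in either case is verifying that the values of expressions of the form $2r - i - j + c$ over $1 \lest i < j \lest r$ form a full integer interval, which again uses $r \gest 3$.
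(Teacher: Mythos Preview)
Your proposal is correct and follows exactly the approach the paper intends: the paper's own proof merely observes that Lemma~\ref{L:clas} gives $\tilde{n}_\alpha = n_\alpha$ and then states that ``the above equalities then follow from an easy computation'' with details omitted. You have supplied those details, and the individual images you compute for each piece of the partition, as well as their unions, all check out.
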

\begin{proof}
By assumption on the root system of $G$, we have $\tilde{n}_\alpha = n_\alpha$ for all $\alpha \in \Delta$, see Lemma \ref{L:clas}.
The above equalities  then follow from an easy computation. We omit the details. 
\end{proof}
Note that Lemma \ref{L:Br} clearly agrees with the discussion in \cite[\S 6]{Ga2} for $G=\Spin_{2r+1}$. 

Let $\wt{\Spin}_{2r+1}^{(n)}$ be the $n$-fold cover associated with $Q(\alpha_1^\vee)=1$. If $r$ is even and $n=2k$ with $k$ odd, then one can check easily that (writing $w_i = w_{\alpha_i}$)
$$w_{r-1} w_{r-3} ... w_3 w_1[(1-k)\rho^\vee] - (1-k)\rho^\vee = k\cdot (\alpha_1^\vee + \alpha_3^\vee + ... + \alpha_{r-1}^\vee) \in Y_{Q,n} - Y_{Q,n}^{sc},$$
where $(1-k)\rho \in Y$. This shows that such $\wt{\Spin}_{2r+1}^{(n)}$ is not persistent. This gives us Table 3. We remark that for $n\in 4\Z$, the persistence for $\wt{\Spin}_{2r+1}^{(n)}$ does not depend solely on the parity of $r$ and is quite complicated.

\begin{table}[!htbp]  \label{T:3}
\caption{Covers of $\Spin_{2r+1}$}
\vskip 5pt
\begin{tabular}{|c|c|c|c|}
\hline
 & $\wt{\Spin}_{2r+1}^{(n)}, r$ odd  &  $\wt{\Spin}_{2r+1}^{(n)}, r$ even   \\
\hline
$n$ is odd & saturated & saturated    \\
\hline
$n \in 4\Z +2$ & saturated  &  not persistent  \\
\hline
\end{tabular}
\end{table}

\subsection{Type $C_r$} \label{SS:Sp}
Consider the Dynkin diagram for the root system of type $C_r, r\gest 2$ as follows.
$$ \qquad
\begin{picture}(4.7,0.2)(0,0)
\put(1,0){\circle{0.08}}
\put(1.5,0){\circle{0.08}}
\put(2,0){\circle{0.08}}
\put(2.5,0){\circle{0.08}}
\put(3,0){\circle{0.08}}
\put(1.04,0){\line(1,0){0.42}}
\multiput(1.55,0)(0.05,0){9}{\circle*{0.02}}
\put(2.04,0){\line(1,0){0.42}}
\put(2.54,0.015){\line(1,0){0.42}}
\put(2.54,-0.015){\line(1,0){0.42}}
\put(2.72,-0.04){$<$}
\put(1,0.1){\footnotesize $\alpha_1$}
\put(1.5,0.1){\footnotesize $\alpha_2$}
\put(2,0.1){\footnotesize $\alpha_{r-2}$}
\put(2.5,0.1){\footnotesize $\alpha_{r-1}$}
\put(3,0.1){\footnotesize $\alpha_r$}
\end{picture}
$$
\vskip 10pt
We can also partition $\Phi_+$ and $\Phi_+^\vee$ as follows, which is dual to that in the type $B_r$ case. First,
$$\Phi_+ = \Phi_{\rm +, I} \sqcup \Phi_{\rm +, II} \sqcup \Phi_{\rm +, III},$$
where 
$$\Phi_{\rm +, I} =  \set{\sum_{i\lest k <j} \alpha_k : 1\lest i < j \lest r} , \ \Phi_{\rm +, III} =\set{2\sum_{i\lest k < r} \alpha_k + \alpha_r: 1\lest i \lest r}$$
and
$$\Phi_{\rm +, II} = \set{\sum_{i\lest k <j} \alpha_k + 2\cdot \sum_{j\lest k < r} \alpha_k + \alpha_r: 1\lest i < j \lest r}.$$
On the other hand, we have
$$\Phi_+^\vee = \Phi_{\rm +, I}^\vee \sqcup \Phi_{\rm +, II}^\vee \sqcup \Phi_{\rm +, III}^\vee,$$
where
$$\Phi_{\rm +, I}^\vee = \set{\sum_{i\lest k \lest r} \alpha_k^\vee: 1\lest i \lest r}, \ \Phi_{\rm +, II}^\vee = \set{\sum_{i\lest k <j} \alpha_k^\vee : 1\lest i < j \lest r} $$
and
$$\Phi_{\rm +, III}^\vee =\set{\sum_{i\lest k <j} \alpha_k^\vee + 2\cdot \sum_{j\lest k \lest r} \alpha_k^\vee: 1\lest i < j \lest r}.$$

Again, there are two cases for the set $\set{\tilde{n}_\alpha, \alpha \in \Delta}$: either $\tilde{n}_{\alpha}$ is constant on $\alpha\in \Delta$, or $2\tilde{n}_{\alpha_i} =\tilde{n}_{\alpha_r}$ for all $1\lest i < r$.

\begin{lm} \label{L:Cr}
Assume the root system of $G$ is of type $C_r, r\gest 2$. If $2\tilde{n}_{\alpha_i} = \tilde{n}_{\alpha_r}$ for all $1\lest i<r$, then 
necessarily $\tilde{n}_{\alpha_i} = n_{\alpha_i}$ for all $1\lest i \lest r$ and
$$
f_X(\Phi_{\rm +, I}^\vee) = f_Y(\Phi_{\rm +, III}), \ f_X(\Phi_{\rm +, II}^\vee) = f_Y(\Phi_{\rm +, I}), \ f_X(\Phi_{\rm +, III}^\vee) = f_Y(\Phi_{\rm +, II});
$$
in this case,
$$f_X(\Phi_+^\vee) =\frac{[1, 2r-1]}{2n_{\alpha_1}} \bigcup \frac{[2, 2r-2]}{n_{\alpha_1}} =  f_Y(\Phi_+).$$
If $\tilde{n}_\alpha$ is constant on $\alpha\in \Delta$, then we have the equalities
$$f_X(\Phi_+^\vee) = \frac{1}{\tilde{n}_\alpha} [1, 2r-1] = f_Y(\Phi_+).$$
\end{lm}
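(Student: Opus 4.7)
The plan is to follow the strategy used in Lemma \ref{L:Br}, exploiting the duality between the root systems of types $B_r$ and $C_r$. The argument is essentially a direct computation on each part of a tripartite partition, but the fact that the lemma has two cases (and that Case 1 asserts an auxiliary equality $\tilde{n}_{\alpha_i} = n_{\alpha_i}$) means that the admissible values of $\tilde{n}_\alpha$ must be tracked carefully.

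First I would invoke Lemma \ref{L:clas} to pin down $\tilde{n}_\alpha$ for simple roots: since $\alpha_r$ is the unique long simple root in type $C_r$, we have $\tilde{n}_{\alpha_i} = n_{\alpha_i}$ for every $i<r$, while $\tilde{n}_{\alpha_r} \in \set{n_{\alpha_r}, n_{\alpha_r}/2}$. Weyl-invariance of $Q$ forces $Q(\alpha_i^\vee) = 2 Q(\alpha_r^\vee)$ for $i<r$, which yields $n_{\alpha_r}/n_{\alpha_i} \in \set{1,2}$. In Case 1 the hypothesis $2\tilde{n}_{\alpha_i} = \tilde{n}_{\alpha_r}$ combined with these constraints leaves only the assignment $n_{\alpha_r} = 2n_{\alpha_i}$ with $\tilde{n}_{\alpha_r} = n_{\alpha_r}$ as consistent, settling the auxiliary claim.

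Next I would compute $f_X$ and $f_Y$ on each piece of the partitions by direct substitution. For $\beta^\vee = \sum_k c_k \alpha_k^\vee$, the identity $\angb{\omega_{\alpha_i}}{\alpha_j^\vee} = \delta_{ij}$ gives $f_X(\beta^\vee) = \tfrac{1}{n_{\alpha_1}} \sum_{i<r} c_i + \tfrac{c_r}{\tilde{n}_{\alpha_r}}$, while on the root side $f_Y(\beta) = {\rm ht}(\beta)/\tilde{n}_\beta$, where $\tilde{n}_\beta$ is constant on each $W$-orbit; in the $C_r$ case, $\Phi_{+, {\rm I}}$ and $\Phi_{+, {\rm II}}$ consist of short roots and $\Phi_{+, {\rm III}}$ of long roots. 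Plugging the explicit formulas from the partition into these expressions produces the three matchings stated in the lemma, together with their union.

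For the second case, $\tilde{n}_\alpha$ constant on $\Delta$ propagates to constancy on all of $\Phi$, so $f_Y(\Phi_+) = {\rm ht}(\Phi_+)/\tilde{n}_\alpha$. Listing heights across the three parts one obtains $[1, r-1] \cup [2, 2r-2] \cup \set{1, 3, \dots, 2r-1} = [1, 2r-1]$, giving the formula on the $\Phi_+$ side; the dual computation on $\Phi_+^\vee$ is identical. The only mildly nontrivial point is the bookkeeping in Case 1: the duality between $B_r$ and $C_r$ interchanges short and long roles and so permutes the three blocks of the partition (one sees $\Phi_{+, {\rm I}}^\vee \leftrightarrow \Phi_{+, {\rm III}}$ while $\Phi_{+, {\rm II}}^\vee \leftrightarrow \Phi_{+, {\rm I}}$ and $\Phi_{+, {\rm III}}^\vee \leftrightarrow \Phi_{+, {\rm II}}$), and the arithmetic denominators in $f_X$ and $f_Y$ fit together only after this reshuffling is respected; once the correspondence is organized, the remaining computation is routine and can be omitted as in Lemma \ref{L:Br}.
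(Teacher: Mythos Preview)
Your proposal is correct and takes essentially the same approach as the paper's proof: both invoke Lemma \ref{L:clas} to establish the auxiliary equality $\tilde{n}_{\alpha_i}=n_{\alpha_i}$ in Case 1, and both defer the remaining verification of $f_X(\Phi_+^\vee)=f_Y(\Phi_+)$ to a routine blockwise computation parallel to Lemma \ref{L:Br}. Your write-up simply makes explicit the arithmetic (the ratio $n_{\alpha_r}/n_{\alpha_i}\in\{1,2\}$ from Weyl-invariance of $Q$, and the height formulas for $f_X$ and $f_Y$) that the paper omits.
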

\begin{proof}
If $n_{\alpha_i} \ne \tilde{n}_{\alpha_i}$, then it follows from Lemma \ref{L:clas} that $i=r$ and that $\tilde{n}_{\alpha_r} = \tilde{n}_{\alpha_i}$ for all $1\lest i <r$. The rest of the assertions follows from an easy computation as for Lemma \ref{L:Br}.
\end{proof}
It is clear that Lemma \ref{L:Cr} agrees with the pertinent discussion in \cite[\S 5]{Ga2} for covers of $\Sp_{2r}$.

Let $\wt{\Sp}_{2r}^{(n)}$ be the $n$-fold cover associated with $Q(\alpha_r^\vee)=1$. We determine when $\wt{\Sp}_{2r}^{(n)}$ is saturated or persistent. If $n$ is odd, then the dual group of $\wt{\Sp}_{2r}^{(n)}$ is $\SO_{2r+1}$, and thus $\wt{\Sp}_{2r}^{(n)}$ is saturated. For even $n$, the cover $\wt{\Sp}_{2r}^{(n)}$ is not saturated, since its dual group is $\Sp_{2r}$.

If $n=2(2k -1) \in 4\Z +2$, then it is easy to see that 
$$w_{\alpha_r}[k\alpha_r^\vee] - k\alpha_r^\vee = (1-2k)\alpha_r^\vee  \in Y_{Q,n} - Y_{Q,n}^{sc}.$$
This shows that $\wt{\Sp}_{2r}^{(n)}$ is not persistent for $n\in 4\Z+2$. For $n\in 4\Z$ we have the following.

\begin{lm}  \label{L:4n-pers}
If $4|n$, then $\wt{\Sp}_{2r}^{(n)}$ is a persistent covering group.
\end{lm}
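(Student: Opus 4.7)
The plan is to reduce the persistence of $\wt{\Sp}_{2r}^{(n)}$ with $n = 4m$ to a single $\Z/2$-valued congruence on $Y_{Q,n}$, and then verify it via a sign-pattern argument along the cycles of the underlying permutation.

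First I identify the relevant lattices explicitly. With $Y = \Z^r$ in the standard basis $e_1, \ldots, e_r$ and $Q(\alpha_r^\vee) = Q(e_r) = 1$, Weyl invariance forces $B_Q(e_i, e_j) = 2\delta_{ij}$, and hence $n_{\alpha_r} = n = 4m$ while $n_{\alpha_i} = 2m$ for $i < r$. A direct computation gives $Y_{Q,n} = 2m \Z^r$, whereas $Y_{Q,n}^{sc}$ is the sublattice generated by $\set{2m(e_i - e_{i+1})}_{1 \lest i < r}$ together with $4m e_r$. In particular $Y_{Q,n}/Y_{Q,n}^{sc} \cong \Z/2$, and an element $z \in Y_{Q,n}$ lies in $Y_{Q,n}^{sc}$ if and only if $\sum_i z_i \equiv 0 \pmod{4m}$. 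Because the inclusion $\Stab_W(y^\dag; \msc{X}_{Q,n}^{sc}) \subset \Stab_W(y^\ddag; \msc{X}_{Q,n})$ is automatic, persistence of $\wt{\Sp}_{2r}^{(n)}$ is equivalent to the implication: for all $w \in W$ and $y \in Y$, if $w[y] - y \in 2m \Z^r$ then $\sum_i \bigl( (w[y])_i - y_i \bigr) \equiv 0 \pmod{4m}$.

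Writing $w = \sigma \circ \epsilon$ in $W = S_r \ltimes \set{\pm 1}^r$, setting $S = \set{k : \epsilon_k = -1}$, and using $\rho^\vee = \sum_i (r - i + 1/2) e_i$, a direct calculation yields simultaneously the component-wise identity
$$(w[y])_{\sigma(k)} - y_{\sigma(k)} = \epsilon_k \tau_k - \tau_{\sigma(k)}, \qquad \tau_k := y_k + k - r - 1/2,$$
and the global identity $\sum_i \bigl( (w[y])_i - y_i \bigr) = -2 \sum_{k \in S} \tau_k$. Thus the hypothesis becomes the recursion $\tau_{\sigma(k)} \equiv \epsilon_k \tau_k \pmod{2m}$ for every $k$, while the desired conclusion becomes $\sum_{k \in S} \tau_k \equiv 0 \pmod{2m}$.

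The last step is a cycle-by-cycle analysis of $\sigma$. Traversing a cycle $C$ once from a base-point $k_0$, the recursion gives $\tau_{k_0} \equiv (-1)^{|S \cap C|} \tau_{k_0} \pmod{2m}$. Since $2\tau_{k_0} = 2 y_{k_0} + 2 k_0 - 2r - 1$ is an odd integer while $2m$ is even, this forces $|S \cap C|$ to be even for every cycle $C$. Enumerating $S \cap C = \set{k_{i_1}, \ldots, k_{i_{2s}}}$ in the cyclic order induced by $\sigma$, the recursion yields $\tau_{k_{i_j}} \equiv (-1)^{j-1} \tau_{k_{i_1}} \pmod{2m}$; consequently the $2s$ terms pair up and $\sum_{k \in S \cap C} \tau_k \equiv 0 \pmod{2m}$. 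Summing over cycles completes the argument. The delicate point is precisely the parity claim on $|S \cap C|$, which relies on $2m$ being even, i.e., $4 \mid n$; this is exactly what fails for $n \in 4\Z + 2$, in agreement with the non-persistence established just above.
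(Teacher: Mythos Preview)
Your proof is correct and takes a genuinely different route from the paper's. The paper argues structurally: writing $n=2m$ with $m$ even, it observes that the twisted $W$-action on $\msc{X}_{Q,n}$ is governed by the affine group $Y_{Q,n}\rtimes W$, so if the two stabilizers differ then some affine reflection in $Y_{Q,n}\rtimes W$ but not in $Y_{Q,n}^{sc}\rtimes W$ fixes a point $z-\rho^\vee$ in the orbit; since the only translations in $Y_{Q,n}\setminus Y_{Q,n}^{sc}$ involve long coroots, this forces $\angb{z-\rho^\vee}{\alpha_r}=m$ for some $z\in Y$, contradicting the fact that this pairing is always odd. Thus the paper leans on the reflection-subgroup property of stabilizers in affine Weyl groups to localize the obstruction at a single wall.

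Your argument avoids that structure theory entirely: you translate persistence into a concrete $\Z/2$-congruence on coordinate sums, rewrite $w[y]-y$ via $\tau_k=(y-\rho^\vee)_k$, and then run a purely combinatorial sign-propagation along the cycles of the permutation part of $w$. The parity step---that $2\tau_{k_0}$ is odd while $2m$ is even, forcing $|S\cap C|$ even---is the exact analogue of the paper's parity contradiction, but reached from below rather than above. Your approach is longer but entirely self-contained; the paper's is quicker but presupposes the (standard) fact that point stabilizers in the relevant affine Weyl group are generated by the affine reflections through the point.
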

\begin{proof}
By the definition of persistence, it suffices to show that for every $y\in Y$, one has
$${\rm Stab}_W(y, \msc{X}_{Q, n}^{sc}) = {\rm Stab}_W(y, \msc{X}_{Q,n}),$$
where the inclusion $\subseteq$ is clear. We write $n=2m$ with $m$ even. Then
$$Y_{Q,n}^{sc} = \set{\sum_{i=1}^r c_i \alpha_i^\vee: c_i \in m\Z \text{ for } i\ne r, \text{ and } c_r \in n\Z}$$
and 
$$Y_{Q,n}= \set{\sum_{i=1}^r c_i \alpha_i^\vee: c_i \in m\Z \text{ for every }i}.$$
If ${\rm Stab}_W(y, \msc{X}_{Q,n}^{sc}) \ne {\rm Stab}_W(y, \msc{X}_{Q,n})$, then there exists $z\in \mca{O}_y$ (the $W$-orbit of $y$ in $\msc{X}_{Q,n}$) such that $z_\rho:=z-\rho^\vee$ lies in the hyperplane $H_{\alpha_r} \subset Y\otimes_\Z \R$ associated to the affine Weyl element $(m\alpha_r^\vee, w_{\alpha_r}) \in Y_{Q,n} \rtimes W$. That is, $w_{\alpha_r}$ fixes $z_\rho - \frac{m}{2}\alpha_r^\vee$. We thus obtain
$$m=\angb{z_\rho}{\alpha_r},$$
the right hand side of which however is always an odd number. This gives a contradiction. Thus every such $\wt{\Sp}_{2r}$ is persistent.
\end{proof}
As a conclusion from the above discussion, we have Table 4 below.

\begin{table}[!htbp]  \label{T:4}
\caption{Covers of $\Sp_{2r}$}
\vskip 5pt
\begin{tabular}{|c|c|c|c|c|}
\hline
 & $n$ is odd   &  $n\in 4\Z + 2$ & $n\in 4\Z$   \\
\hline
$\wt{\Sp}_{2r}^{(n)}$ & saturated & not persistent  & persistent, but    \\
 &   &  & not saturated\\
\hline
\end{tabular}
\end{table}

\subsection{Type $F_4$}
Consider the Dynkin diagram of simple roots of $F_4$ as follows
$$
\begin{picture}(4.7,0.2)(0,0)
\put(2,0){\circle{0.08}}
\put(2.5,0){\circle{0.08}}
\put(3,0){\circle{0.08}}
\put(3.5,0){\circle{0.08}}
\put(2.04,0){\line(1,0){0.42}}
\put(2.54,0.015){\line(1,0){0.42}}
\put(2.54,-0.015){\line(1,0){0.42}}
\put(2.72,-0.04){$>$}
\put(3.04,0){\line(1,0){0.42}}
\put(2,0.1){\footnotesize $\alpha_1$}
\put(2.5,0.1){\footnotesize $\alpha_2$}
\put(3,0.1){\footnotesize $\alpha_3$}
\put(3.5,0.1){\footnotesize $\alpha_4$}
\end{picture}
$$

\vskip 10pt 
Note that covers of $F_4$ are always saturated. An explicit computation using the data and notations in \cite[Page 287-288]{Bou} gives the following.
\begin{lm} \label{L:F4}
For any $n$-fold cover of $F_4$, if $n_{\alpha_1} = 2n_{\alpha_4}$, then one has
$$f_X(\Phi_+^\vee) = f_Y(\Phi_+) = \frac{1}{n_{\alpha_4}} [1, 8] \bigcup \frac{1}{2n_{\alpha_4}} \set{1, 3, 5, 7, 9, 11}.$$
If $n_{\alpha}$ is constant on $\alpha \in \Delta$, then 
$$f_X(\Phi_+^\vee) = f_Y(\Phi_+) = \frac{1}{n_\alpha} [1, 11].$$
\end{lm}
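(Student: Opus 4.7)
The plan is to proceed by a direct finite-root-system computation, after making two preliminary reductions. First, since $F_4$ contains no simple root satisfying the hypotheses of Lemma \ref{L:clas}, every $n$-fold cover of $F_4$ is saturated, so $\tilde{n}_\alpha = n_\alpha$ throughout. Second, the Weyl-invariance of $Q$ combined with the existence of exactly two root lengths forces $n_{\alpha_1} = n_{\alpha_2}$ and $n_{\alpha_3} = n_{\alpha_4}$, while the ratio $Q(\alpha_3^\vee) = 2 Q(\alpha_1^\vee)$ of short-to-long coroots gives $n_{\alpha_1}/n_{\alpha_4} \in \set{1, 2}$, accounting for exactly the two sub-cases in the statement.

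Next I will exploit two simplifications of the defining formulas. Using the identity $\angb{\rho^\vee}{\alpha_i} = 1$ on simple roots, one obtains $f_Y(\beta) = \text{ht}(\beta)/n_\beta$, with $n_\beta = n_{\alpha_1}$ or $n_{\alpha_4}$ according to the length of $\beta$. For a positive coroot $\beta^\vee = \sum_i c_i^\vee \alpha_i^\vee$, the defining expression unravels to $f_X(\beta^\vee) = (c_1^\vee + c_2^\vee)/n_{\alpha_1} + (c_3^\vee + c_4^\vee)/n_{\alpha_4}$, and the $c_i^\vee$ are recovered from $\beta = \sum_i c_i \alpha_i$ via the dualization $\beta^\vee = (2/(\beta,\beta))\beta$: for a long $\beta$ one gets $\beta^\vee = c_1\alpha_1^\vee + c_2\alpha_2^\vee + (c_3/2)\alpha_3^\vee + (c_4/2)\alpha_4^\vee$ (with $c_3, c_4$ necessarily even), while for a short $\beta$ one gets $\beta^\vee = 2c_1\alpha_1^\vee + 2c_2\alpha_2^\vee + c_3\alpha_3^\vee + c_4\alpha_4^\vee$.

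I will then enumerate the 24 positive roots of $F_4$ in Bourbaki's $\varepsilon_i$-realization, recording for each its length and its simple-root coefficients. In the case where $n_\alpha$ is constant, both $f_Y$ and $f_X$ reduce to $\text{ht}(\,\cdot\,)/n_\alpha$; since the positive-root heights of $F_4$ cover $[1,11]$ (consistent with the exponents $1, 5, 7, 11$), and the same holds for positive coroots by self-duality of the root system, one concludes $f_X(\Phi_+^\vee) = f_Y(\Phi_+) = [1, 11]/n_\alpha$. In the case $n_{\alpha_1} = 2n_{\alpha_4}$, I will express every value over the common denominator $2n_{\alpha_4}$ and partition the tally by length: the 12 long positive roots (heights $\set{1, 1, 2, 3, 4, 5, 5, 6, 7, 9, 10, 11}$) contribute the numerators $\set{1, 2, 3, 4, 5, 6, 7, 9, 10, 11}$, while the 12 short positive roots (heights $\set{1, 1, 2, 2, 3, 3, 4, 4, 5, 6, 7, 8}$) contribute $[1,8]/n_{\alpha_4} = \set{2, 4, 6, 8, 10, 12, 14, 16}/(2n_{\alpha_4})$. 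The union is visibly $[1,8]/n_{\alpha_4} \cup \set{1,3,5,7,9,11}/(2n_{\alpha_4})$, as claimed. A parallel bookkeeping on the coroot side (where the dualization swaps long and short, so short coroots now carry the factor $1/(2n_{\alpha_4})$ and long coroots the factor $1/n_{\alpha_4}$) reproduces the same image set for $f_X$.

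The main obstacle is purely one of bookkeeping: the 24 positive roots and their dualizations must be tracked with care to keep the long/short distinction consistent on both root and coroot sides. A useful internal sanity check is provided by the self-duality of $F_4$: under the root-coroot correspondence, the multisets of heights of long positive coroots coincide with those of long positive roots (and likewise in the short case), which forces the parallel computations of $f_X$ and $f_Y$ to yield identical image sets.
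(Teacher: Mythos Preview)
Your proposal is correct and follows the same approach as the paper, which simply states that ``an explicit computation using the data and notations in \cite[Page 287--288]{Bou}'' gives the result. You have supplied the details the paper omits: in fact your formulas show the pointwise identity $f_X(\beta^\vee)=f_Y(\beta)$ for every $\beta\in\Phi_+$ in both sub-cases (for instance, when $n_{\alpha_1}=2n_{\alpha_4}$ and $\beta$ is long, $(c_1+c_2)/n_{\alpha_1}+(c_3/2+c_4/2)/n_{\alpha_4}=\mathrm{ht}(\beta)/(2n_{\alpha_4})$, and similarly for short $\beta$), so the equality of image sets is immediate without a separate coroot enumeration or the self-duality heuristic in your last paragraph.
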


\subsection{Type $G_2$}

Consider the Dynkin diagram of $G_2$:
$$
\begin{picture}(5.2,0.2)(0,0)
\put(2.5,0){\circle{0.08}}
\put(3,0){\circle{0.08}}
\put(2.53,0.018){\line(1,0){0.44}}
\put(2.54,0){\line(1,0){0.42}}
\put(2.53,-0.018){\line(1,0){0.44}}
\put(2.7,-0.04){$<$}
\put(2.5,0.1){\footnotesize $\alpha_1$}
\put(3,0.1){\footnotesize $\alpha_2$}
\end{picture}
$$

\vskip 10pt 
Every $n$-fold cover of $G_2$ is saturated, since the dual group is always $G_2$.
\begin{lm} \label{L:G2}
For $n$-fold cover of $G_2$, if $n_{\alpha_2} = 3 n_{\alpha_1}$, then 
$$f_X(\Phi_+^\vee) = f_Y(\Phi_+) =\frac{1}{3n_{\alpha_1}} \set{1, 4, 5} \bigcup \frac{1}{n_{\alpha_1}} \set{1, 2, 3}  \subset \Q.$$
If $n_{\alpha_1} = n_{\alpha_2}$, then 
$$f_X(\Phi_+^\vee) = f_Y(\Phi_+) = \frac{1}{n_\alpha} [1, 5].$$
\end{lm}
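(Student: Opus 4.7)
The plan is to compute both $f_X(\Phi_+^\vee)$ and $f_Y(\Phi_+)$ explicitly by listing all six positive roots and six positive coroots of $G_2$ and evaluating the two functions term by term. Since $G_2$ is not of type $C_r$, Lemma \ref{L:clas} gives $\tilde{n}_\alpha = n_\alpha$ for every $\alpha\in\Delta$, so throughout I can replace $\tilde{n}_\alpha$ by $n_\alpha$. Bourbaki's labelling gives the positive roots
\[
\alpha_1,\ \alpha_2,\ \alpha_1+\alpha_2,\ 2\alpha_1+\alpha_2,\ 3\alpha_1+\alpha_2,\ 3\alpha_1+2\alpha_2,
\]
with $\alpha_1,\alpha_1+\alpha_2,2\alpha_1+\alpha_2$ short and $\alpha_2,3\alpha_1+\alpha_2,3\alpha_1+2\alpha_2$ long, and the corresponding positive coroots are
\[
\alpha_1^\vee,\ \alpha_1^\vee+\alpha_2^\vee,\ \alpha_1^\vee+2\alpha_2^\vee,\ \alpha_2^\vee,\ \alpha_1^\vee+3\alpha_2^\vee,\ 2\alpha_1^\vee+3\alpha_2^\vee,
\]
where the last three come from short $\Phi$-roots and are long in $\Phi^\vee$. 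Weyl-invariance of $Q$ forces $\tilde{n}_\beta$ to depend only on the length class of $\beta$, so $\tilde{n}_\beta = n_{\alpha_1}$ for short $\beta$ and $\tilde{n}_\beta = n_{\alpha_2}$ for long $\beta$.

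For $f_Y$ I would use the standard identity $\langle \rho^\vee,\beta\rangle = \operatorname{ht}(\beta)$, which makes the six values
\[
f_Y(\alpha_1)=\tfrac{1}{n_{\alpha_1}},\ f_Y(\alpha_1+\alpha_2)=\tfrac{2}{n_{\alpha_1}},\ f_Y(2\alpha_1+\alpha_2)=\tfrac{3}{n_{\alpha_1}},
\]
\[
f_Y(\alpha_2)=\tfrac{1}{n_{\alpha_2}},\ f_Y(3\alpha_1+\alpha_2)=\tfrac{4}{n_{\alpha_2}},\ f_Y(3\alpha_1+2\alpha_2)=\tfrac{5}{n_{\alpha_2}}.
\]
For $f_X$, writing a positive coroot as $m\alpha_1^\vee+n\alpha_2^\vee$ and using $\langle \omega_i,\alpha_j^\vee\rangle=\delta_{ij}$, I get $f_X(m\alpha_1^\vee+n\alpha_2^\vee) = m/n_{\alpha_1} + n/n_{\alpha_2}$, which I evaluate on the six coroots above.

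Finally I split into the two cases of the lemma. When $n_{\alpha_1}=n_{\alpha_2}=:n_\alpha$, both multisets collapse to $\tfrac{1}{n_\alpha}\{1,1,2,3,4,5\}$ and $\tfrac{1}{n_\alpha}\{1,2,3,4,5\}$ respectively, giving $f_X(\Phi_+^\vee)=f_Y(\Phi_+)=\tfrac{1}{n_\alpha}[1,5]$. When $n_{\alpha_2}=3n_{\alpha_1}$, the $f_Y$ values become $\{1,2,3\}/n_{\alpha_1}$ together with $\{1,4,5\}/(3n_{\alpha_1})$, while the $f_X$ values $1/n_{\alpha_1}$, $1/(3n_{\alpha_1})$, $1/n_{\alpha_1}+1/(3n_{\alpha_1})=4/(3n_{\alpha_1})$, $1/n_{\alpha_1}+2/(3n_{\alpha_1})=5/(3n_{\alpha_1})$, $2/n_{\alpha_1}$, $3/n_{\alpha_1}$ produce exactly the same set. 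There is no real obstacle — everything reduces to six arithmetic checks — so the only care needed is to align Bourbaki's labelling (where $\alpha_1$ is short and $\alpha_2$ is long, so $\tilde{n}_{\alpha_2}=3\tilde{n}_{\alpha_1}$ can occur) with the normalisation used in defining $f_X$ and $f_Y$.
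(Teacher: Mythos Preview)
Your proposal is correct and matches the paper's approach: the paper states this lemma without proof, leaving it as a direct computation from Bourbaki's data, which is exactly what you carry out. One harmless slip: in your list of coroots the remark ``the last three come from short $\Phi$-roots'' is not quite right ($\alpha_2^\vee$ comes from the long root $\alpha_2$), but this does not affect any of the six arithmetic evaluations, which are all correct.
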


\subsection{Generic $\Theta(\pi^\dag, \nu)$}
Using the proceeding discussion, we have the following result for $\psi$-generic $\Theta(\pi^\dag, \nu)$.

\begin{thm}  \label{T:generic}
Let $\wt{G}$ be a $p$-adic persistent cover  in the tame case. Let $\Theta(\pi^\dag, \nu)$ be an unramified theta representation, and let $\tnu \in X\otimes \R$ be a saturation of the exceptional $\nu$. Then the following are equivalent:
\begin{enumerate}
\item[(i)] $W_{\tnu} = \set{1}$,
\item[(ii)] the $W$-orbit of $2\rho^\vee$ in $\msc{X}_{Q,n}$ with respect to $w[-]$ is free,
\item[(iii)] the $W$-orbit of $0$ in $\msc{X}_{Q,n}$ with respect to $w[-]$ is free,
\item[(iv)] $\angb{\varepsilon_W}{ \sigma^\msc{X}}_W \gest 1$, i.e., there is at least one $W$-free orbit in $\msc{X}_{Q,n}$.
\end{enumerate}
If $\Theta(\pi^\dag, \nu)$ is $\psi$-generic, then by setting $\mca{O} \subset \mfr{g}$ to be the regular nilpotent (depending on $\psi_\natural$ and $\psi$) such that $c_\mca{O} = \dim \Wh_\psi(\Theta(\pi^\dag, \nu))$, we have
$$c_\mca{O} =  \angb{j_{W_\tnu}^W(\varepsilon_\tnu)}{ \varepsilon_W \otimes \sigma^\msc{X} }_W.$$
That is, \eqref{E:main1} and \eqref{E:main2} for this $\mca{O}$ in Conjecture \ref{C:main} both hold for the $\psi$-generic $\Theta(\pi^\dag, \nu)$.
\end{thm}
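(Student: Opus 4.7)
The plan is to chain the four conditions (i)--(iv) via explicit stabilizer computations, the character--cocharacter bridge furnished by Lemmas \ref{L:sim-lac}--\ref{L:G2}, and the Whittaker dimension formula \eqref{E:dimWh}, and then to read off the $c_\mca{O}$ formula in the $\psi$-generic case. First I would dispatch (ii) $\Leftrightarrow$ (iii) by the direct calculation
\[
w[0] - 0 \ =\ \rho^\vee - w(\rho^\vee) \ =\ -\bigl(w[2\rho^\vee] - 2\rho^\vee\bigr),
\]
which exhibits the twisted stabilizers of $0$ and $2\rho^\vee$ in $\msc{X}_{Q,n}$ as the common subgroup $\set{w \in W: w(\rho^\vee) - \rho^\vee \in Y_{Q,n}}$ of $W$, so the two orbits are simultaneously free.

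The equivalence (i) $\Leftrightarrow$ (ii) is the heart of the matter. Here I would use the two preparatory identifications recalled just before the lemmas, namely $W_\tnu = \set{1}$ iff ${\rm Im}(f_X) \cap \Z = \emptyset$ on the character side and ``the orbit of $2\rho^\vee$ is free'' iff ${\rm Im}(f_Y) \cap \Z = \emptyset$ on the cocharacter side, combined with the case-by-case equalities $f_X(\Phi_+^\vee) = f_Y(\Phi_+)$ proved in Lemmas \ref{L:sim-lac}, \ref{L:Br}, \ref{L:Cr}, \ref{L:F4}, \ref{L:G2} across all Dynkin types that arise for a persistent cover. The implication (iii) $\Rightarrow$ (iv) is immediate, so only (iv) $\Rightarrow$ (iii) remains: I would argue contrapositively. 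If the orbit of $0$ is not free, then a reflection $w_\beta$ lies in its stabilizer, i.e.\ ${\rm ht}(\beta)\beta^\vee \in Y_{Q,n}$; the explicit image ${\rm Im}(f_Y)$ from the relevant lemma, together with the persistence hypothesis, then forces $w_\beta$ to stabilize every $W$-orbit in $\msc{X}_{Q,n}$, so $\angb{\varepsilon_W}{\sigma^\msc{X}}_W = 0$, contradicting (iv).

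For the $c_\mca{O}$ formula, the hypothesis that $\Theta(\pi^\dag, \nu)$ is $\psi$-generic together with the just-proved equivalence yields $W_\tnu = \set{1}$, and hence $j_{W_\tnu}^W(\varepsilon_\tnu) = \mbm{1}$. A short character manipulation, using that $\varepsilon_W \otimes \sigma^\msc{X}$ is self-dual, gives
\[
\angb{j_{W_\tnu}^W(\varepsilon_\tnu)}{\varepsilon_W \otimes \sigma^\msc{X}}_W \ =\ \angb{\mbm{1}}{\varepsilon_W \otimes \sigma^\msc{X}}_W \ =\ \angb{\varepsilon_W}{\sigma^\msc{X}}_W,
\]
which by \eqref{E:dimWh} equals $\dim \Wh_\psi(\Theta(\pi^\dag, \nu))$ and hence $c_\mca{O}$ for the regular $F$-orbit $\mca{O}$ (depending on $\psi_\natural$ and $\psi$) via the M\oe glin--Waldspurger identification. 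The principal obstacle I anticipate is exactly the implication (iv) $\Rightarrow$ (iii): in an abstract $W$-set the existence of some free orbit need not force freeness of a particular one, and persistence together with the precise image descriptions of $f_Y$ tabulated in Lemmas \ref{L:sim-lac}--\ref{L:G2} are the ingredients needed to rule out a ``partial'' integrality pattern that would leave some orbits free while the orbit of $0$ is stabilized.
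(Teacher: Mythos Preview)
Your treatment of (ii)$\Leftrightarrow$(iii), (i)$\Leftrightarrow$(ii), (iii)$\Rightarrow$(iv), and the final $c_\mca{O}$ computation all match the paper's argument essentially verbatim.

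The gap is in your proposed (iv)$\Rightarrow$(iii). You argue contrapositively: if the orbit of $0$ is not free then some reflection $w_\beta$ stabilizes $0$, i.e.\ $\tilde{n}_\beta \mid {\rm ht}(\beta)$, and then ``persistence together with the explicit ${\rm Im}(f_Y)$'' should force every orbit to be non-free. But $w_\beta$ stabilizes $y$ in $\msc{X}_{Q,n}$ iff $\tilde{n}_\beta$ divides $\langle \beta, y-\rho^\vee\rangle = \langle \beta, y\rangle - {\rm ht}(\beta)$, which for general $y$ reduces to $\tilde{n}_\beta \mid \langle\beta,y\rangle$; this is certainly not automatic, and passing to another point in the $W$-orbit or to another reflection requires a genuine argument (e.g.\ in type $A_r$ it becomes a pigeonhole on residues mod $n_\alpha$, which is already nontrivial and type-specific). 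You flag this as the principal obstacle, but the sketch you give does not close it.

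The paper avoids this entirely with a uniform alcove argument for (iv)$\Rightarrow$(ii): if some $y\in Y$ has free orbit, then (after subtracting the $W$-fixed part $y_0$) the element $y-y_0-\rho^\vee=\sum k_i\omega_i^\vee$ with $k_i\in\Z$ can be taken in the interior of the fundamental alcove $C$ of $\tilde{W}=\tilde{Y}_{Q,n}^{sc}\rtimes W$; the alcove inequality $\sum_i k_ig_i/\tilde{n}_i<1$ with each $k_i\geq 1$ forces $\sum_i g_i/\tilde{n}_i<1$, hence $\rho^\vee=\sum_i\omega_i^\vee$ itself lies in the interior of $C$, so the orbit of $2\rho^\vee$ is free. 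This is type-independent and sidesteps any orbit-by-orbit stabilizer chase.
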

\begin{proof}
First, we show the equivalence between (i) and (ii). As noted in the beginning of this section that (i) and (ii) are equivalent to the equalities $f_X(\Phi_+^\vee) = \emptyset$ and $f_Y(\Phi_+) =\emptyset$, respectively. However, if follows from Lemmas \ref{L:sim-lac}, \ref{L:Br}, \ref{L:Cr}, \ref{L:F4} and \ref{L:G2} that 
$$f_X(\Phi_+^\vee) = f_Y(\Phi_+)$$
holds for all root system types. Thus the equivalence of (i) and (ii) follows.

The equivalence between (ii) and (iii) is trivial. It is also clear that (ii) implies (iv); thus, it suffices to show the converse. 
Write $\Delta=\set{\alpha_i: 1\lest i \lest r}$. Every $y\in Y$ can be written in the form
$$y= y_0 + \sum_{\alpha_i\in \Delta} c_i \alpha_i^\vee$$
where $y_0\in Y\otimes \Q$ satisfies $\angb{y_0}{\alpha_i}=0$ for every $\alpha_i \in \Delta$, and $c_i \in \Q$, see \cite[Lemma 1.2]{Spr1}. In particular, $y_0$ is fixed by the Weyl group. If we write
$$y - y_0 = \sum_{i} k_i \omega_i^\vee \in Y^{sc}\otimes \R$$
in terms of the basis $\set{\omega_i^\vee}$ for $Y^{sc}\otimes \R$, then $k_i = \angb{y-y_0}{\alpha_i} = \angb{y}{\alpha_i} \in \Z$.
Now let 
$$\tilde{\omega}_i^\vee= \tilde{n}_{\alpha_i} \cdot \omega_i^\vee \in Y^{sc}\otimes \R$$
 be the fundamental coweight associated with $\tilde{\alpha}_{Q,n}^\vee$. Consider the affine Weyl group $\tilde{W} = \tilde{Y}_{Q,n}^{sc} \rtimes W$ acting on $Y^{sc}\otimes \R$.
Let $C \subset Y^{sc}\otimes \R$ be a fundamental alcove with extreme points ${0} \cup \set{\tilde{\omega}_i^\vee/g_i: 1\lest i \lest r}$ with $g_i \in \N$, see \cite[Page 187-188]{Bou}. Assertion (iv) is equivalent to that there exists $y\in Y$ such that
$$ y - y_0 -\rho^\vee = k_1 \omega_1^\vee + k_2 \omega_2^\vee + ... + k_r \omega_r^\vee \in  C,$$
where $k_i \in \Z_{\gest 0}$ for every $i$. We have
$$y - y_0 - \rho^\vee =\sum_{i=1}^r \frac{k_ig_i}{\tilde{n}_i}(\tilde{\omega}_i^\vee/g_i).$$
Since $y - y_0 - \rho^\vee$ lies in $C$, it gives that $k_i g_i >0$ for every $i$ and moreover
$$\sum_{i=1}^r  \frac{k_ig_i}{\tilde{n}_i} < 1.$$
Thus, $k_i \gest 1$ for all i, and one has $\sum_{i=1}^r  g_i/\tilde{n}_i < 1$. However, this shows that $\rho^\vee = \sum_i \omega_i^\vee$ lies in $C$ and therefore the $W$-orbit of $2\rho^\vee \in Y$ in $\msc{X}_{Q,n}$ is free. Hence, (iv) implies (ii).

The last assertion follows from \eqref{E:dimWh} and the equivalence between (i) and (iv). The proof is completed.
\end{proof}

\begin{rmk}
Since we have an explicit form for the set $f_X(\Phi_+^\vee) = f_Y(\Phi_+)$ for all root system types, for persistent covers $\wt{G}^{(n)}$ one can determine precisely the mininum $n$ such that $\Theta(\pi^\dag, \nu)$ is generic. It is also possible to determine $n$ such that $\Theta(\pi^\dag, \nu)$ is distinguished, i.e., $\dim \Wh_\psi(\Theta(\pi^\dag, \nu))=1$. The results agree with \cite{Ga2} for covers of type $A_r, B_r, C_r$ and $G_2$ discussed there. For instance, as an example not covered by \cite{Ga2}, it follows from Lemma \ref{L:sim-lac} that for the $n$-fold cover of $E_8$ with $Q(\alpha^\vee)=-1$, one has
$$\dim \Wh_\psi(\Theta(\pi^\dag, \nu)) \gest 1$$
exactly when $n\gest 30$, and the equality holds for $n=30$.
\end{rmk}


\section{Covers of $\Sp_{2r}$ and $\GL_r$} \label{S:p}

The goal of this section is to show that for odd-fold covers of $\Sp_{2r}$, Conjecture \ref{C:main} agrees with the one studied by Friedberg and Ginzburg, see \cite{FG5, FG6}. For covers of $\GL_r$, we also explicate the equality \eqref{E:main1} in Conjecture \ref{C:main}, which has been verified by Y.-Q. Cai \cite{Cai1} and Savin \cite{Sav2} (unpublished).

In this section, we continue to assume that $\Theta(\pi^\dag, \nu)$ is an unramified theta representation. We follow the notations of \S \ref{SS:Sp}, and consider $\wt{\Sp}_{2r}^{(n)}$ for odd $n$, which is associated with $Q(\alpha_r^\vee)=1$. In this case, $n_\alpha=n$ is constant on $\alpha\in \Delta$. Recall that every nilpotent orbit of $\Sp_r$ is parametrized by a symplectic partition $(c_1^{p_1} c_2^{p_2} ... c_k^{p_k})$ of $2r$ such that $c_1 > c_2 > ... > c_k \gest 1$  and $p_i$ is even if $c_i$ is odd.
For every partition $(c_1^{p_1} c_2^{p_2} ... c_k^{p_k})$ of $2r$, denote by 
$$(c_1^{p_1} c_2^{p_2} ... c_k^{p_k})_\Sp$$
its symplectic collapse. For every $n\in \N$, we have $2r=qn + t$ with $q\in \Z_{\gest 0}$ and $0\lest t < n$. Denote by
$$\mca{O}_{2r, n} = (n^q t)_\Sp,$$
where we omit $t$ if it is zero.

\begin{thm} \label{T:Sp}
Let $\wt{\Sp}_{2r}^{(n)}$ be the cover with $n$ odd. Let $\nu = \rho/n \in X\otimes \R$ be the unique exceptional character. Then
$$\mca{O}_{\rm Spr}(j_{W_\nu}^W(\varepsilon_\nu))=\mca{O}_{2r, n}.$$
Thus, \eqref{E:main1} in Conjecture \ref{C:main} is equivalent to \cite[Conjecture 2.2]{FG5}.
\end{thm}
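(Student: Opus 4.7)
The plan is to invoke the Macdonald--Lusztig--Spaltenstein theorem, which identifies $\mca{O}_{\rm Spr}(j_{W_L}^W(\varepsilon_{W_L}))$ with the Richardson orbit $\Ind_\mathfrak{l}^{\mathfrak{sp}_{2r}}(0)$ whenever $W_L \subset W$ is the Weyl group of a Levi $L$. Thus the strategy is to identify $W_\nu$ as the Weyl group of a standard Levi $L \subset \Sp_{2r}$, compute the Richardson orbit $\Ind_L^{\Sp_{2r}}(0)$ explicitly as a partition, and match it against the symplectic collapse $(n^q t)_\Sp$.

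With $\nu = \rho/n = (r, r-1, \ldots, 1)/n$ in standard coordinates, the pairings $\langle \nu, \alpha^\vee \rangle$ evaluate to $(j-i)/n$, $(2r-i-j+2)/n$, and $(r-i+1)/n$ on the positive roots $e_i - e_j$, $e_i + e_j$, and $2e_i$ respectively. Setting $m_i = r - i + 1$ and partitioning $\{1, \ldots, r\}$ into residue classes $R_k = \{i : m_i \equiv k \pmod n\}$, one sees that $R_0$ contributes a type $C_{|R_0|}$ subsystem (containing all three kinds of integral roots supported on indices in $R_0$), while for each $k \in \{1, \ldots, (n-1)/2\}$ the pair $\{R_k, R_{n-k}\}$ contributes a type $A_{|R_k| + |R_{n-k}| - 1}$ subsystem generated by the $e_i - e_j$ within each class and the $e_i + e_j$ bridging them. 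Since $n$ is odd, the pairing $k \leftrightarrow n - k$ is fixed-point-free for $k \ne 0$. Writing $r = sn + u$ with $0 \leq u < n$ gives $|R_0| = s$ and $|R_k| \in \{s, s+1\}$ according as $k > u$ or $k \leq u$. A sign-flipping element of $(\Z/2)^r \subset W(C_r)$ conjugates each short-root $A_{r_j - 1}$ factor into a standard $S_{r_j}$, so $W_\nu$ is $W(C_r)$-conjugate to the Weyl group of the standard Levi $L = \Sp_{2r_0} \times \GL_{r_1} \times \cdots \times \GL_{r_m}$ with $r_0 + r_1 + \cdots + r_m = r$.

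The Richardson orbit of this $L$ in $\Sp_{2r}$ is the symplectic collapse of the transpose of the partition obtained by sorting the composition $(r_1, \ldots, r_m, 2r_0, r_m, \ldots, r_1)$. A case split on $u$ gives three outcomes: $u = 0$ yields composition $(2s)^n$ with transpose $(n^{2s})$; $0 < u \leq (n-1)/2$ yields $((2s+1)^{2u}, (2s)^{n-2u})$ with transpose $(n^{2s}, 2u)$; and $(n-1)/2 < u < n$, setting $v = u - (n-1)/2$, yields $((2s+2)^{2v}, (2s+1)^{2(n-u-1)}, 2s)$ with transpose $(n^{2s}, n-1, 2u-n+1)$. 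The first two subcases correspond to $(q, t) = (2s, 2u)$ with both entries even, the transpose is already symplectic, and equals $(n^q, t)$ directly. The third subcase has $(q, t) = (2s+1, 2u-n)$ both odd, so $(n^q, t) = (n^{2s+1}, 2u-n)$ is not symplectic; a short dominance-order argument (any symplectic $\mu \leq (n^{2s+1}, 2u-n)$ whose partial sum at position $2s+1$ equals $(2s+1)n$ must begin with $n^{2s+1}$, contradicting even multiplicity for the odd part $n$) identifies $(n^{2s}, n-1, 2u-n+1)$ as the maximal symplectic partition below, hence equal to $(n^q, t)_\Sp$.

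The main obstacle I anticipate is the third subcase above, where the symplectic collapse is nontrivial and one must verify that the transpose-of-composition partition coincides exactly with the combinatorially defined collapse $(n^q, t)_\Sp$, rather than merely lying below it in dominance. The residue-class bookkeeping producing the composition is mechanical but has several branches that must be handled correctly.
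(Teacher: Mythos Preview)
Your proposal is correct and takes a genuinely different route from the paper. Both arguments begin by identifying $\Phi_\nu$ via residue classes modulo $n$ as a product of one type $C_s$ factor on the class $R_0$ and short-root type $A$ factors on the paired classes $R_k\cup R_{n-k}$; the case split and the resulting root-system types agree. From that point the paper keeps $\Phi_\nu$ as a non-parabolic subsystem, applies Macdonald's identity $j_{A_{k-1}}^{C_k}(\varepsilon)=j_{D_{\lceil k/2\rceil}\times C_{\lfloor k/2\rfloor}}^{C_k}(\varepsilon)$ to each $A$ factor, reads off the bipartition $\sigma(\xi;\eta)$ labeling $j_{W_\nu}^W(\varepsilon_\nu)$, and then computes $\mca{O}_{\rm Spr}$ through Lusztig symbols. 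You instead observe that a sign-flip element of $(\Z/2)^r\subset W(C_r)$ conjugates $\Phi_\nu$ to an honest Levi root system, so that Lusztig--Spaltenstein gives $\mca{O}_{\rm Spr}(j_{W_\nu}^W(\varepsilon_\nu))=\Ind_L^{\Sp_{2r}}(0)$ directly as a Richardson orbit, computed by the transpose-and-collapse recipe; the only nontrivial collapse, in your case $(n+1)/2\le u<n$, is handled cleanly by your dominance argument at position $2s+1$. Your approach bypasses both Macdonald's identity and the symbol calculus, and is arguably more uniform; what the paper's approach buys in exchange is the explicit bipartition labeling $j_{W_\nu}^W(\varepsilon_\nu)\in\Irr(W)$, information not visible from the Richardson description alone.
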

\begin{proof}
The result follows from a direct computation of the root subsystem $\Phi_\nu$, the Macdonald representation $j_{W_\nu}^W(\varepsilon_\nu)$ and the nilpotent orbit $\mca{O}_{\rm Spr}(j_{W_\nu}^W(\varepsilon_\nu))$. Using the notation in \S \ref{SS:Sp}, we define
$$\Phi_\nu^{\vee} = \set{\beta^\vee \in \Phi^\vee: \ \angb{\nu}{\beta^\vee} \in \Z}$$
and
$$\Phi_{\nu, j}^{+,\vee}= \Phi_{\nu}^\vee \cap \Phi_{+, j}^\vee \text{ for every } j\in \set{\rm I, II, III}.$$
Clearly, the root system $\Phi_\nu$ is dual to that of $\Phi_\nu^\vee$.
We use the standard notations as in \cite[Page 267]{Bou} such that $\alpha_i^\vee=e_i - e_{i+1}$ for $1\lest i < r$ and $\alpha_r^\vee = e_r$.
For every $i\in [1, r]$, we set $i' = r+1 -i$. For every $x\in \R$, denote by $\floor{x} \in \Z$ its integral part. It is easy to see
$$\begin{aligned}[t]
\Phi_{\nu, \rm I}^{+,\vee} & =\set{e_i: \ i'/n \in [1, \floor{r/n}]},\\
\Phi_{\nu, \rm II}^{+,\vee} & =\set{e_i - e_j: \ n|(j-i), 1\lest i < j \lest r}, \\
\Phi_{\nu, \rm III}^{+,\vee} & =\set{e_i + e_j: \ n|(j' + i'), 1\lest i < j \lest r}.\\
\end{aligned}$$
Write
$$r=an + b \text{ with } a\in \Z_{\gest 0} \text{ and } 0\lest b < n.$$
We proceed by considering the following four cases, which exhaust all possibilities:
\begin{enumerate}
\item[(i)] $n>r$,
\item[(ii)] $r=a n$ with $a\in \N$,
\item[(iii)] $n<r, (n+1)/2\lest b < n$,
\item[(iv)] $n<r, 0< b \lest (n-1)/2$.
\end{enumerate}
If $n\gest 2r+1$, then $\Theta(\pi^\dag, \nu)$ is generic and the assertion holds by Theorem \ref{T:generic}. The case $n=1$ is also trivial. Thus, we assume $1< n< 2r$, and for simplicity of notations, we will also write
$$n= 2m+1 \text{ with } m\in [1, r-1].$$

For case (i), it is clear that $\Phi_{\nu, \rm I}^{+,\vee} = \Phi_{\nu, \rm II}^{+,\vee} = \emptyset$, and 
$$\Phi_{\nu, \rm III}^{+,\vee} = \set{e_i + e_{n-i}: \ i \in [m+1, r]}.$$
The root subsystem $\Phi_\nu$ is then of type
$$\underbrace{C_1 \times C_1  \times ... \times C_1}_\text{$(r-m)$ copies}$$
Thus
$$j_{W_\nu}^W(\varepsilon_\nu) = \sigma(m; r-m),$$
where $(\xi; \eta)$ is an ordered partition of $r$ and parametrizes an irreducible representation $\sigma(\xi; \eta)$ of $W$, and every element in $\Irr(W)$ corresponds to such an ordered partition, see \cite[Page 379]{Car}, \cite{Lus79} or \cite[\S 5.5]{GePf}. By a computation with the Lusztig symbol (see \cite[Page 419]{Car} or \cite[\S 10.1]{CM}), we see that 
$$\mca{O}_{\rm Spr}(j_{W_\nu}^W(\varepsilon_\nu)) = (n-1, 2r+1-n)_\Sp = (n-1, 2r+1-n).$$
On the other hand, we have $\mca{O}_{2r, n} =(n, 2r-n)_\Sp = (n-1, 2r+1-n)$ as well.

For case (ii), where $r=an$ and thus $q=2a$, a direct computation shows that the root subsystem $\Phi_\nu$ is of type
$$\underbrace{A_{2a-1} \times A_{2a-1}  \times ... \times A_{2a-1}}_\text{$m$ copies} \times C_a,$$
which is a subsystem inside
$$ \underbrace{C_{2a} \times C_{2a}  \times ... \times C_{2a}}_\text{$m$ copies} \times C_a \subset C_r.$$
Here $A_{k-1}$ is the usual subsystem inside $C_k$, and one has
\begin{equation} \label{E:A-DC}
j_{A_{k-1}}^{C_k}(\varepsilon) = j_{D_{\floor{(k+1/2)}} \times C_{\floor{k/2}}}^{C_k} (\varepsilon),
\end{equation}
see \cite[Remarks 3]{Mac1}, where the result was stated for type $B$ groups but also holds for type $C$. Here $j_{A_{k-1}}^{C_k}$ means the $j$-induction from the Weyl group of $A_{k-1}$ to the Weyl group of $C_k$; similarly for the right hand side of \eqref{E:A-DC}. Also, we adopt the convention that $D_1 =\emptyset$. Since the $j$-induction is transitive and compatible with direct products, it follows from \eqref{E:A-DC} that $j_{W_\nu}^W(\varepsilon_\nu)$ equals to the Macdonald representation induced from the subsystem
$$\underbrace{D_a \times D_a  \times ... \times D_a}_\text{$m$ copies} \times \underbrace{C_a  \times C_a \times ... \times C_a}_\text{$(m+1)$ copies}$$
of $C_r$. Hence, $j_{W_\nu}^W(\varepsilon_\nu) = \sigma(m^a; (m+1)^a)$. It now follows from a computation with the Lusztig symbol that
$$\mca{O}_{\rm Spr}(j_{W_\nu}^W(\varepsilon_\nu)) = (n^{2a}) = \mca{O}_{2r, n},$$
as desired.

For case (iii), the root subsystem of $\Phi_\nu$ is of type
$$\underbrace{A_{2a+1} \times A_{2a+1}  \times ... \times A_{2a+1}}_\text{$(b-m)$ copies}  \times \underbrace{A_{2a} \times A_{2a}  \times ... \times A_{2a}}_\text{$(2m-b)$ copies} \times C_a.$$
It follows from \eqref{E:A-DC} that $j_{W_\nu}^W(\varepsilon_\nu)$ is equal to the Macdonald representation induced from the root subsystem
$$\underbrace{D_{a+1} \times D_{a+1}  \times ... \times D_{a+1}}_\text{$m$ copies}  \times \underbrace{C_{a+1} \times C_{a+1}  \times ... \times C_{a+1}}_\text{$(b-m)$ copies} \times \underbrace{C_{a} \times C_{a}  \times ... \times C_{a}}_\text{$(n-b)$ copies}.$$
This gives that $j_{W_\nu}^W(\varepsilon_\nu) = \sigma(m^{a+1}; (n-m)^a (b-m))$. A computation with the Lusztig symbol gives that
$$\mca{O}_{\rm Spr}(j_{W_\nu}^W(\varepsilon_\nu)) = (n^{2a} \cdot (n-1) \cdot 2(b-m))_\Sp = (n^{2a} \cdot (n-1) \cdot 2(b-m)).$$
Now we have
$$\mca{O}_{2r, n} = (n^{2a+1} \cdot (2b-n))_\Sp = \mca{O}_{\rm Spr}(j_{W_\nu}^W(\varepsilon_\nu)),$$
as desired.

Lastly, for case (iv), the root subsystem of $\Phi_\nu$ is of type
$$\underbrace{A_{2a} \times A_{2a}  \times ... \times A_{2a}}_\text{$b$ copies}  \times \underbrace{A_{2a-1} \times A_{2a-1}  \times ... \times A_{2a-1}}_\text{$(m-b)$ copies} \times C_a.$$
Again, \eqref{E:A-DC} gives that $j_{W_\nu}^W(\varepsilon_\nu)$ is equal to the Macdonald representation induced from the root subsystem
$$\underbrace{D_{a+1} \times D_{a+1}  \times ... \times D_{a+1}}_\text{$b$ copies}  \times \underbrace{D_{a} \times D_{a}  \times ... \times D_{a}}_\text{$(m-b)$ copies} \times \underbrace{C_{a} \times C_{a}  \times ... \times C_{a}}_\text{$(m+1)$ copies}.$$
This gives that 
$$j_{W_\nu}^W(\varepsilon_\nu) = \sigma(bm^a; (m+1)^a).$$
A computation with the Lusztig symbol gives that
$$\mca{O}_{\rm Spr}(j_{W_\nu}^W(\varepsilon_\nu)) = ((2b) \cdot n^{2a})_\Sp = ((2b) \cdot n^{2a}).$$
On the other hand, since $2r=(2a)n + 2b$ with $2b<n$, we have
$$\mca{O}_{2r, n} =((2b) \cdot n^{2a}) = \mca{O}_{\rm Spr}(j_{W_\nu}^W(\varepsilon_\nu))$$
as well. This completes the proof.
\end{proof}

We note that \cite[Conjecture 2.2]{FG5} (and thus also Conjecture \ref{C:main} by the above equivalence) has been verified in some cases, see \cite[Theorem 1]{FG6}.

\begin{rmk}
Consider the persistent cover $\wt{\Sp}_{2r}^{(4)}$ associated with $Q(\alpha_r^\vee) = 1$ (see Lemma \ref{L:4n-pers}). Let $\nu\in X\otimes \R$ be the unique exceptional character. One has $\tilde{Y}_{Q,4} = 2\cdot Y$ and the saturation of $\nu$ is $\tnu = \rho/2$. A similar computation as in Theorem \ref{T:Sp} shows that the root subsystem $\Phi_\tnu$ is $D_{a} \times C_{a}$ if $r=2a$ is even, and is equal to $D_{a+1} \times C_a$ if $r=2a+1$ is odd. Thus we have
$$\mca{O}_{\rm Spr}(j_{W_\tnu}^W(\varepsilon_\tnu)) = (2^{r}),$$
and \eqref{E:main1} in Conjecture \ref{C:main} in this case was proved by Leslie in \cite[Theorem 1.3]{Les}.
\end{rmk}

Consider the Kazhdan-Patterson cover $\wt{\GL}_r$ associated to a pair $ (\bfp, \bfq) \in \Z \times \Z$ with $2\bfp - \bfq=-1$. More precisely, we have the quadratic form $Q$ on $Y$ determined by  $Q(e_i) = \bfp$ and $B_Q(e_i, e_j) = \bfq, i\ne j$; this gives rise to the $n$-fold cover $\wt{\GL}_r$. Here $Q(\alpha^\vee) = -1$ for every root $\alpha$. Every $\wt{\GL}_r$ is saturated and thus persistent. 

\begin{thm}[{\cite[Theorem 1.2]{Cai1}}] \label{T:GL}
For a Kazhdan-Patterson $n$-fold cover $\wt{\GL}_r$, one has
\begin{equation} \label{E:GL}
\mca{O}_{\rm Spr}(j_{W_\nu}^{W} (\varepsilon_\nu)) =(n^q t),
\end{equation}
where $r= qn + t$ with $0\lest t < n$; moreover, \eqref{E:main1} in Conjecture \ref{C:main} holds for such cover.
\end{thm}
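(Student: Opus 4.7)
The strategy is to reduce the claim to a short calculation with Young subgroups of $S_r$ combined with the Springer correspondence for type $A$; the second assertion (that \eqref{E:main1} in Conjecture \ref{C:main} holds) is imported directly from \cite[Theorem 1.2]{Cai1}, so I focus on establishing \eqref{E:GL}. Throughout, I use that every Kazhdan--Patterson cover is saturated, hence $\tnu$ may be taken equal to $\nu$.

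First I would pin down the exceptional character. Since $Q(\alpha^\vee) = -1$ for every root, we have $n_\alpha = n$ uniformly, hence $\alpha_{Q,n}^\vee = n\alpha^\vee$ and an exceptional $\nu$ is characterized by $\angb{\nu}{\alpha^\vee} = 1/n$ for all $\alpha \in \Delta$. In the standard basis with $\alpha_i^\vee = e_i - e_{i+1}$ this gives $\angb{\nu}{(e_i - e_j)^\vee} = (j-i)/n$, so
\[
\Phi_\nu^+ = \set{e_i - e_j : 1 \lest i < j \lest r, \ n \mid (j-i)}.
\]
Partitioning $[1, r]$ into residue classes modulo $n$ and using $r = qn + t$ with $0 \lest t < n$, exactly $t$ of the classes contain $q+1$ indices while the remaining $n - t$ contain $q$ indices. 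Consequently $W_\nu$ is the Young subgroup $S_\lambda := S_{q+1}^{\times t} \times S_q^{\times (n-t)}$ of $W = S_r$, associated with the partition $\lambda = ((q+1)^t, q^{n-t})$ of $r$.

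I would then invoke the classical Macdonald formula for $j$-induction from Young subgroups of symmetric groups, namely
\[
j_{S_\lambda}^{S_r}(\varepsilon_{S_\lambda}) = \sigma^{\lambda^T},
\]
where $\sigma^\mu$ denotes the irreducible $S_r$-representation (Specht module) indexed by $\mu$, normalized so that $\sigma^{(r)} = \mbm{1}$ and $\sigma^{(1^r)} = \varepsilon_W$. A direct transposition gives $\lambda^T = (n^q, t)$: for $1 \lest i \lest q$ every part of $\lambda$ is at least $i$, while only $t$ parts are at least $q+1$. Since the Springer correspondence for $\GL_r$ is the tautological bijection $\sigma^\mu \leftrightarrow \mca{O}_\mu$ between $\Irr(S_r)$ and nilpotent orbits of $\mfr{g}$ indexed by Jordan type, I conclude
\[
\mca{O}_{\rm Spr}(j_{W_\nu}^W(\varepsilon_\nu)) = \mca{O}_{(n^q, t)} = (n^q\, t),
\]
which is \eqref{E:GL}.

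The only real hazard is bookkeeping of two canonical normalizations: Macdonald's recipe transposes $\lambda$, whereas the type-$A$ Springer map is the identity on partitions, so the composite transposes exactly once. I would sanity-check against the two extremes, namely $W_\nu = \set{1}$ (for $n > r$, giving $q = 0$, $t = r$, $\mbm{1} = \sigma^{(r)}$ and the regular orbit $(r)$) and $W_\nu = W$ (for $n = 1$, giving $q = r$, $t = 0$, $\varepsilon_W = \sigma^{(1^r)}$ and the zero orbit $(1^r)$), to rule out a spurious double transpose and confirm the conventions line up.
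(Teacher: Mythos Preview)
Your proposal is correct and follows essentially the same approach as the paper: identify $\Phi_\nu$ as the root subsystem of type $A_q^{\times t} \times A_{q-1}^{\times (n-t)}$ (equivalently, $W_\nu$ as the Young subgroup $S_{q+1}^{\times t} \times S_q^{\times (n-t)}$), apply Macdonald's formula for $j$-induction of the sign character from a Young subgroup to obtain the representation indexed by $(n^q\, t)$, and read off the orbit via the type-$A$ Springer correspondence. The paper's proof is terser---it simply records $\phi(\nu) = \rho/n$, states the type of $\Phi_\nu$, and cites \cite[\S 11.4]{Car} for the Macdonald step---but your more explicit bookkeeping (the transpose and the two sanity checks) is a faithful unpacking of the same argument.
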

\begin{proof}
It suffices to prove the equality \eqref{E:GL}, which coupled with \cite[Theorem 1.2]{Cai1} verifies Conjecture \ref{C:main}.
If $\nu \in X\otimes \R$ is exceptional, then we have 
$$\phi(\nu) = \rho/n \in X^{sc}\otimes \R,$$
and $W_\nu$ is just the integral Weyl subgroup of $\phi(\nu)$ with respect to $X^{sc}$. A similar (and in fact simpler) computation as in Theorem \ref{T:Sp} shows that the root subsystem $\Phi_\nu$ is of the form
$$\underbrace{A_{q} \times A_{q}  \times ... \times A_{q}}_\text{$t$ copies}  \times \underbrace{A_{q-1} \times A_{q-1}  \times ... \times A_{q-1}}_\text{$(n-t)$ copies}.$$
Thus the Macdonald representation $j_{W_\nu}^W (\varepsilon_\nu)$ is associated with the partition $(n^qt)$ of $r$ (see \cite[\S 11.4]{Car}), which parametrizes exactly the orbit $\mca{O}_{\rm Spr}(j_{W_\nu}^W (\varepsilon_\nu))$ by the Springer correspondence. This completes the proof.
\end{proof}

\begin{rmk}
In Theorems \ref{T:Sp} and \ref{T:GL}. the assumption that $Q(\alpha_r^\vee) =-1$ and $Q(\alpha^\vee)=-1$) for $\Sp_{2r}$ and $\GL_r$ respectively is not essential. Indeed, the argument and result hold for general quadratic form as well: for $\wt{\Sp}_{2r}$, one only needs to assume that $n_{\alpha_r}$ is odd; for $\GL_r$, it could be any quadratic form and thus works for arbitrary Brylinski--Deligne covers of $\GL_r$.
\end{rmk}

As a last example, we consider covers of $\SO_{2r+1}$. Let
$$\SO_{2r+1} \into \SL_{2r+1}$$
be the natural embedding. Consider the $n$-fold cover $\wt{\SL}_{2r+1}^{(n)}$ associated with $Q(\alpha^\vee)=1$ for any coroot $\alpha$. By restriction, one obtains a cover $\wt{\SO}_{2r+1}^{(n)}$. For $n=2$, the double cover $\wt{\SO}_{2r+1}^{(2)}$ is not a linear group but has the special property that its covering torus is abelian, see \cite[Example 2.10]{GSS1}.

The four-fold cover $\wt{\SO}_{2r+1}^{(4)}$ and its associated theory of theta liftings were investigated in the work of Bump, Friedberg and Ginzburg \cite{BFrG2, BFrG}. We have
$$Y_{Q,4} = 2Y = Y_{Q,4}^{sc}$$
and thus the dual group of $\wt{\SO}_{2r+1}^{(4)}$ is $\SO_{2r+1}$. In particular, $\wt{\SO}_{2r+1}^{(4)}$ is saturated and thus persistent. Using notations in \S \ref{SS:Br}, it is easy to see that
$$\nu=\omega_r + \sum_{i=1}^{r-1} (\omega_i/2) \in X\otimes \R$$
is the unique exceptional character. A simple computation shows that
$$
\Phi_\nu \text{ is of type }
\begin{cases}
B_m \times B_m  & \text{ if } r=2m, \\
B_{m+1} \times B_m & \text{ if } r=2m+1.
\end{cases}
$$
It follows that 
$$\mca{O}_{\rm Spr}(j_{W_\nu}^W(\varepsilon_\nu))  =
\begin{cases}
 (2^{2m} 1) & \text{ if } r=2m, \\
(2^{2m} 1^3) & \text{ if } r=2m+1.
\end{cases}
$$
In this case, \eqref{E:main1} in Conjecture \ref{C:main} was proved in \cite[Theorem 4.2]{BFrG2}. In particular, $\Theta(\pi^\dag, \nu)$ is a minimal representation for $r=2, 3$. We also remark that for the double cover $\wt{\rm GSpin}_{2r+1}^{(2)}$, the equality \eqref{E:main1} in Conjecture \ref{C:main} was proved by Kaplan \cite[Theorem 1]{Kap004}. The computation is similar to the above, and we omit the details.

\section{The archimedean analogue} \label{S:arch} 
In this section, we discuss about an analogue of Conjecture \ref{C:main} for archimedean $F$, which in fact motivated us to consider the $p$-adic case in the previous sections.

As a first example, consider $F=\C$ and the three-fold cover $\wt{G}_2^{(3)}$ of $G_2$, which splits over $G_2$. It follows from \cite{Sav4} that $\Theta(\pi^\dag, \nu)$ is a minimal representation. Now we check that the formal analogue of Conjecture \ref{C:main} holds in this case. A simple computation gives 
$$\Phi_\nu^+ = \set{\alpha_1^\vee, 3\alpha_2^\vee + \alpha_1^\vee, 3\alpha_2^\vee + 2\alpha_1^\vee},$$
the set of long positive coroots in $\Phi_+^\vee$, and furthermore we have
$$j_{W_\nu}^W(\varepsilon_\nu) = \phi_{1, 3}'',$$
where $\phi_{1, 3}''$ is the standard-labelled representation with character values given in \cite[Page 412]{Car}. This gives
$$\mca{O}_{\rm Spr}(\phi_{1, 3}'') = \mca{O}_{\rm min},$$
see \cite[Page 427]{Car}. Thus, we have a formal analogue of Conjecture \ref{C:main} for this $\Theta(\pi^\dag, \nu)$.

In the remaining of this section, we assume $F=\C$ or  $\R$ (and in the latter case $n=2$) and summarize some known results. In particular, the main result is Theorem \ref{p:diagram}. It gives the desired archimedean analogue of Conjecture \ref{C:main}, see the discussion in \S \ref{SSS:R-spl}.

\subsection{Wave front set and associated variety} \label{s:WFAV}
{Let $G$ be a reductive Lie group with maximal compact subgroup $K$, and let $G_\C$ and $K_\C$ be the complexification of $G$ and $K$. 
Denote $\mfr{g}$, $\mfr{k}$,  $\mfr{g}_\C$ and $\mfr{k}_\C$ to be the Lie algebra of $G, K, G_\C$ and $K_\C$, respectively.  Let $\pi$ be an irreducible admissible representation $\pi$ of $G$, we briefly recall the two invariants of cycles associated with it, one defined analytically and the other algebraically.

In \cite{BV3}, one has for $\pi$ an asymptotic expansion for the 
character expansion $\chi_\pi$ in a neighborhood of 0 in $\mfr g$, of the form 
$$
\chi_\pi \sim \sum _{i=-r} ^\infty D_i
$$
with $\set{D_i}$ being a set of tempered distributions on $\mfr g$. The asymptotic support 
$${\rm AS}(\chi_\pi)\subset \mfr g^*$$
 is defined to be the union of the supports of the 
Fourier transforms $\widehat D_i$. It is known that 
${\rm AS}(\chi_\pi)$ is a union of nilpotent orbits. Identifying $\mfr{g}$ with its dual $\mfr{g}^*$ by the Cartan-Killing form, we view ${\rm AS}(\chi_\pi) \subset \mfr{g}$ and define 
$$
\mca{N}_{\rm tr} (\pi) = \set{ \mca O\in \mca N: \mca O\subset {\rm AS}(\chi_\pi) },
$$
where $\mca N$ denotes the set of nilpotent orbits in $\mfr g$. 
The set $\mca{N}_{\rm tr}(\pi)$ coincides with the wave front set of $\pi$ defined by Howe in \cite{How2},  as  was
proved by Rossmann (see \cite{Ros1, Ros2}).

The Fourier transform of the leading term in the asymptotic expansion of $\chi_\pi$ is a linear combination of invariant measures $\mu_\mca{O}$ taking the form
 $$\sum_{\mca{O} \in \mca{N}_{\rm tr}^{\rm max}(\pi)} c_\mca{O} \cdot \mu_\mca{O},$$
 where $0\ne c_\mca{O} \in \C$ and $\mca{N}_{\rm tr}^{\rm max}(\pi) \subset \mca{N}_{\rm tr}(\pi)$ is a certain subset. One thus defines the wavefront cycle of $\pi$ as the following finite sum
 $$
 {\rm WFC}(\pi) = \sum_{\mca{O} \in \mca{N}_{\rm tr}^{\rm max}(\pi)} c_\mca{O}\cdot \mca{O}.
 $$ 

On the other hand, we can attach nilpotent orbits to $\pi$ with an algebraic method (see \cite{Vog5} for more details). Consider the Harish-Chandra $(\mfr{g}_\C, K)$-module $V$ associated with $\pi$. Let $U(\mfr{g}_\C)$ be the enveloping algebra of $\mfr{g}_\C$, and let  $U_k\subset U(\mfr{g}_\C), k\gest 0$ be the subspace spanned by products of at most $k$ elements of $\mfr{g}_\C$.   Then $V$ admits a ``good filtration" 
$$V_0\subset V_1\subset \cdots V_k\subset\cdots,$$
with $V_k=U_k\cdot V_0$. Relative to this filtration we form an associated graded space ${\rm gr}(V)$ as a module over ${\rm gr}(U(\mfr{g}_\C))$. We identify 
${\rm gr}(U(\mfr{g}_\C))$ with  the symmetric algebra $S(\mfr{g}_\C)$ by the Poincar\'{e}--Birkhoff--Witt theorem.
Moreover, ${\rm gr}(V)$ can be viewed as a module over $S(\mfr{g}_\C/ \mfr{k}_\C)$ since the action of $\mfr{k}_\C$ preserves the filtration of $V$ and hence the ideal generated by $\mfr{k}_\C$ in $S(\mfr{g}_\C)$ annihilates 
${\rm gr}(V)$. 
The associated variety of $\pi$ is defined to be 
$$
{\rm AV}(\pi) = \{ \lambda \in \mfr{g}_\C ^*: p(\lambda) =0 \text{ whenever } p\in\text{Ann}({\rm gr}(V))  \},
$$
which is a subset of $(\mfr{g}_\C/\mfr{k}_\C)^*$.  By Kostant and Rallis \cite{KR}, ${\rm AV}(\pi)$ is a (finite) union of nilpotent orbits in $(\mfr{g}_\C / \mfr{k}_\C)^*$. 
Identifying coadjoint orbits with adjoint orbits, and using the Sekiguchi correspondence to identify the nilpotent $K_\C$-orbits in $(\mfr{g}_\C /\mfr{k}_\C)^*$ with the nilpotent $G$-orbits in $\mfr{g}^*$, we 
define the set 
$$
\mca N _{\rm alg}(\pi) = \set{ \mca O\in \mca N\mid \mca O \subset {\rm AV}(\pi)}.
$$
Let $\mca N_{\rm alg} ^{\rm max}(\pi)$ be the set of maximal elements in  $\mca N_{\rm alg}(\pi)$. Then one can define the associated cycle of $\pi$
to be $$
{\rm Ass}(\pi) = \sum _{\mca O\in \mca N_{\rm alg} ^{\rm max}(\pi)} b_{\mca O} \cdot \mca O,
$$ 
where $b_{\mca O}$ denotes the rank of the sheaf ${\rm gr}(V)$ along $\wt{\mca O^{K_\C}}$.
It follows from the result of Schmid and Vilonen \cite{SV} that 
$${\rm WFC}(\pi)={\rm Ass}(\pi);$$
in particular, $\mca{N}_{\rm tr} ^{\rm max}(\pi) = \mca N_{\rm alg} ^{\rm max}(\pi)$.

Motivated from this, we will concentrate on the algebraic invariants $\mca N_{\rm alg}$, $\mca N_{\rm alg} ^{\rm max}$ for 
the following reasons:
\begin{enumerate}
\item[--] If we consider a cover $\wt{G}$ of $G$, then the Barbasch--Vogan character expansion of an irreducible genuine $\pi$ is expected to hold. However, as the details have not been checked in the literature, we exert some caution and do not make an assumption of it.
\item[--] On the other hand, the invariants defined algebraically could be well adapted for covering groups, as the algebraic invariants depend more on the action of the Lie algebra and enveloping algebra of $\wt{G}$, which are the same as $G$.
\end{enumerate}
Thus, in the remaining of this section, we will explore extensively the pertinent work obtained from the algebraic method.

\subsection{Some invariants} 
For more details of this subsection, we refer the reader to \cite{BV4, BV5, BV6}.  We assume that $\mfr{g}_\C$ is an arbitrary complex semisimple Lie algebra, with enveloping algebra $U(\mfr g_\C)$.  Let $\mfr{h}$ be a Cartan subalgebra of $\mfr{g}_\C$, and $\Phi(\mfr{g}_\C, \mfr{h})$ the associated roots. Let $W$ be the Weyl group.

\subsubsection{Primitive ideals} \label{s:prim}
An ideal $I$ of $U(\mfr{g}_\C)$ is called a primitive ideal if it is the annihilator of a simple $U(\mfr{g}_\C)$-module V. We say that the primitive ideal $I={\rm Ann}(V)$ has infinitesimal character $\lambda\in \mfr{h}^*$ if 
$V$ has infinitesimal character $\lambda$.  Let 
${\rm Prim}(U(\mfr{g}_\C))$ 
be the set of primitive ideals in $U(\mfr{g}_\C)$, and let
$${\rm Prim}_\lambda(U(\mfr{g}_\C))$$
be the subset of  those with infinitesimal character $\lambda$.
Define 
\begin{align*}
\Phi_\lambda &=\set{\alpha\in \Phi(\mfr{g}_\C,\mfr{h}): \ \angb{\alpha^\vee}{\lambda} \in \Z }, \\
\Phi_\lambda^+ &=\set{\alpha\in \Phi_\lambda: \angb{\alpha^\vee}{\lambda}  >0 }, \\
\Delta_\lambda & = \text{ simple roots of } \Phi_\lambda ^+ \\
W_\lambda &=W(\Phi_\lambda), \text{ the integral Weyl group of  } \lambda.
\end{align*} 
We choose a positive root system $\Phi^+ =\Phi^+(\mfr{g}_\C , \mfr{h})$ such that
$$\Phi^+ \supseteq -\Phi_\lambda ^+;$$
that is, we assume $\lambda$ is negative. 
For $w\in W_\lambda$, we put
\begin{itemize}
\item[(i)] $\mfr b = \mfr h +\mfr n$, with $\Phi(\mfr n, \mfr h)=\Phi^+$,
\item[(ii)] $M(w\lambda) = U(\mfr{g}_\C)\otimes_{U(\mfr{b})} \C_{w\lambda -\rho}$,
\item[(iii)] $L(w\lambda) =$ the irreducible quotient of $M(w\lambda)$,
\item[(iv)] $I(w\lambda) ={\rm Ann}(L(w\lambda))$, the annihilator of $L(w\lambda)$ in $U(\mfr{g}_\C).$
\end{itemize}
We may write $I(w):=I(w\lambda)$ for brevity, whenever $\lambda$ is understood.

\begin{prop}[\cite{Duf}] \label{p:prim} 
The map $W_\lambda \to {\rm Prim}_\lambda(U(\mfr{g}_\C))$ given by $w\mapsto I(w)$ is surjective. 
\end{prop}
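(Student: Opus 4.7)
The plan is to follow Duflo's original two-step argument. \emph{Step 1} reduces from an arbitrary primitive ideal to the annihilator of a simple highest weight module. By Dixmier's theorem, any $I\in{\rm Prim}_\lambda(U(\mfr{g}_\C))$ is the annihilator of a simple $U(\mfr{g}_\C)$-module $V$, and $V$ must have infinitesimal character $\lambda$. Via Joseph's argument using Goldie rank (or a specialization argument within category $\mathcal{O}$), one replaces $V$ by a simple highest weight module $L(\mu)$ with the same annihilator, so $I=I(\mu)$. The Harish-Chandra isomorphism then forces $\mu+\rho\in W\cdot\lambda$, i.e., $\mu=w\lambda-\rho$ for some $w\in W$.

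\emph{Step 2}, which is the substance of Duflo's theorem, shows that $w$ may be chosen inside the integral Weyl group $W_\lambda$. Given such $w$, write $w=vu$ with $u\in W_\lambda$ and $v$ a minimal-length representative of $wW_\lambda\in W/W_\lambda$. The claim reduces to the identity
$$I(vu\lambda)=I(u\lambda),$$
so that $w':=u\in W_\lambda$ will satisfy $I=I(w')$. To prove this identity I would invoke Jantzen's translation principle: pick an integral regular antidominant weight $\lambda_0$ with $W_{\lambda_0}=W_\lambda$, translate from $\lambda_0$ to $\lambda$, and use the fact that the translation functor sends $L(u\lambda_0-\rho)\mapsto L(u\lambda-\rho)$ and intertwines formation of annihilators on the relevant block. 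Comparing $L(vu\lambda-\rho)$ with $L(u\lambda-\rho)$ through the resulting wall-crossing correspondence then gives the desired equality.

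The main obstacle is \emph{Step 2}: translation functors do not in general preserve simple modules individually, as they may send some simples to zero and permute others within a block. The delicate point is therefore to verify that the specific pair $L(u\lambda-\rho)$ and $L(vu\lambda-\rho)$ correspond under the translation in a way that preserves annihilators. This rests on the antidominance of $\lambda$ and on Jantzen's wall-crossing formulas in category $\mathcal{O}$; once established, the surjectivity of $w\mapsto I(w)$ from $W_\lambda$ to ${\rm Prim}_\lambda(U(\mfr{g}_\C))$ follows.
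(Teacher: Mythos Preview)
The paper does not prove this proposition; it is simply quoted as a result of Duflo, so there is no argument in the text to compare yours against. Your two-step plan has the right shape, and Step~1 is indeed Duflo's theorem: every primitive ideal is ${\rm Ann}\,L(\mu)$ for some simple highest weight module $L(\mu)$, and the Harish-Chandra isomorphism then forces $\mu\in W\lambda$.

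The gap is in Step~2. Translation and wall-crossing functors shift highest weights by elements of the integral weight lattice, so they carry each block of category~$\mathcal{O}$ to a block in the same root-lattice coset. When $v\notin W_\lambda$ the simples $L(u\lambda)$ and $L(vu\lambda)$ share an infinitesimal character but lie in \emph{different} blocks of $\mathcal{O}$ (since $vu\lambda-u\lambda\notin\Z\Phi$, by the very definition of $W_\lambda$), and no translation or wall-crossing functor passes between them. Choosing an auxiliary $\lambda_0$ with $W_{\lambda_0}=W_\lambda$ and translating does not help: the translation moves the $u$-block and the $vu$-block in parallel without ever comparing them, so the identity $I(vu\lambda)=I(u\lambda)$ cannot be extracted from Jantzen's translation principle as you propose. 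The reduction to $W_\lambda$ rests instead on an annihilator-preserving comparison of these parallel blocks---for instance via the Bernstein--Gelfand equivalence with Harish-Chandra bimodules, or via Joseph's analysis of Goldie-rank polynomials---which shows that ${\rm Prim}_\lambda(U(\mfr{g}_\C))$ is controlled by $(\Phi_\lambda,W_\lambda)$ alone and is insensitive to which coset $vW_\lambda\lambda\subset W\lambda$ one starts from.
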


The following result associates a primitive ideal to a special Weyl group representation. 
\begin{thm}[{\cite[Theorem D]{BV4}, \cite[Theorem 1.1]{BV5}}] \label{t:Goldie}
Suppose $\lambda\in\mfr{h}^*$ is regular, and $I\in {\rm Prim}_\lambda(U(\mfr{g}_\C))$. Then Joseph's Goldie rank representation \cite{JosII} $\sigma _I\in \Irr(W_\lambda)$ is a 
special representation of $W_\lambda$ in the sense of Lusztig \cite{Lus79}; also, every special representation of $W_\lambda$ occurs as a Joseph's Goldie rank representation.
\end{thm}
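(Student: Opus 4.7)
The plan is to reduce the statement to a question about Kazhdan--Lusztig cells and then match two characterizations of a distinguished representation attached to each cell. First, using Jantzen's translation principle, I would reduce to the integral case where $W_\lambda = W$: for regular $\lambda$ the relevant block of the BGG category $\mca{O}$ is equivalent to one for a semisimple Lie algebra whose full Weyl group is $W_\lambda$, and both primitive ideals and Joseph's Goldie rank representations transport cleanly across such equivalences.

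Next, I would invoke Duflo's surjection (Proposition \ref{p:prim}) to write every $I \in {\rm Prim}_\lambda(U(\mfr{g}_\C))$ as $I(w)$ for some $w\in W_\lambda$, together with Joseph's fundamental result that $I(w) = I(w')$ precisely when $w$ and $w'$ lie in the same left Kazhdan--Lusztig cell $\mca{C}$ of $W_\lambda$. Thus Goldie rank representations are naturally parametrized by left cells, and the task reduces to identifying, for each left cell $\mca{C}$, the Goldie rank representation $\sigma_\mca{C}$ inside the associated cell module of $W_\lambda$.

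The core identification rests on a coincidence of two numerical invariants. Joseph characterizes $\sigma_\mca{C}$ as the unique irreducible constituent of the cell module whose Goldie rank polynomial attains the minimal degree, this degree being (up to a uniform shift) the Gelfand--Kirillov dimension of any $L(w\lambda)$ with $w \in \mca{C}$. Lusztig singles out the \emph{special} representation of $\mca{C}$ as the unique constituent on which his $a$-function attains the cell's common $a$-value. These two descriptions agree once Joseph's minimal degree is identified with Lusztig's $a$-function, which would follow from the Kazhdan--Lusztig/Beilinson--Bernstein description of Jantzen filtrations in terms of Kazhdan--Lusztig polynomials, whose leading coefficients simultaneously encode both invariants. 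Surjectivity of the correspondence $I \mapsto \sigma_I$ onto special representations then follows, since every left cell supports exactly one special representation and every special representation of $W_\lambda$ belongs to the unique family of some two-sided cell.

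The main obstacle is the uniform identification in the core step of Joseph's minimal Goldie rank degree with Lusztig's $a$-function on a cell. In the original Barbasch--Vogan treatment this was handled by an extensive case-by-case verification, separately for classical and for exceptional types; I would hope for a type-free argument via Lusztig's asymptotic Hecke algebra $J$ and the based-ring structure on cells, but would anticipate needing to fall back on type-by-type checks for at least the exceptional groups.
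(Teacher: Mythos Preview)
The paper does not prove Theorem~\ref{t:Goldie} at all: it is stated with the attribution \cite[Theorem D]{BV4}, \cite[Theorem 1.1]{BV5} and then used as a black box (for instance in Proposition~\ref{p:tau} and Theorem~\ref{p:diagram}). So there is no proof in the paper to compare your proposal against.

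That said, your outline is a reasonable summary of the strategy behind the original Barbasch--Vogan arguments: reduction to the integral case via translation, Duflo's parametrization of primitive ideals by left cells, and the identification of Joseph's Goldie rank representation with the unique special constituent of the cell module. Your honest assessment of the ``main obstacle'' is accurate: the identification of Joseph's minimal degree with Lusztig's $a$-function was indeed carried out type by type in \cite{BV4} (classical) and \cite{BV5} (exceptional), and a fully uniform proof along the lines you sketch via the asymptotic Hecke algebra was not available at the time. If you were asked to supply a proof here, citing \cite{BV4, BV5} as the paper does is the appropriate response; your sketch would be suitable as an informal explanation of what those references contain, but not as a self-contained proof.
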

We define the associated variety of $I\in {\rm Prim}_\lambda(U(\mfr{g}_\C))$  as in \S \ref{s:WFAV}. More precisely, let ${\rm gr}(I) \subset S(\mfr{g}_\C)$ be the associated graded ideal in the symmetric algebra $S(\mfr{g}_\C)$. Then ${\rm AV}(I)$, the associated variety of $I$, is defined to be the zero variety of ${\rm gr}(I)$ inside $\mfr{g}_\C^*$. It is well-known that ${\rm AV}(I)$ is the closure of a single nilpotent orbit in $\mfr{g}_\C$, which we recall is identified with $\mfr{g}_\C^*$.

\begin{rmk}
Consider an irreducible Harish-Chandra $(\mfr{g}_\C, K)$-module $(\pi, V)$, where $K$ is the maximal compact subgroup of a certain group $G$ with ${\rm Lie}(G)\otimes\C=\mfr{g}_\C$.  Here $\pi$ can be viewed as an irreducible admissible representation of $G$. 
Let $I_\pi :={\rm Ann}_{U(\mfr{g}_\C)}(V)$. Then 
$${\rm AV}(I_\pi)= G_\C \cdot {\rm AV}(\pi). $$
This remark works for both linear and covering groups.
\end{rmk}  

\subsubsection{Cells and the Springer correspondence} \label{s:cell}
Assume $\lambda\in\mfr h^*$ is regular. We recall the cell decomposition on $W_\lambda$ as follows. We first define relations $\lest_{\rm L}$ and $\lest_{\rm R}$ on elements in $W_\lambda$ by setting 
$$
w_1\lest_{\rm L} w_2 \iff I(w_1) \subseteq I(w_2), \quad \text{(see notation in Proposition \ref{p:prim})}
$$
and 
$$
w_1\lest_{\rm R} w_2 \iff  w_1 ^{-1}\lest_{\rm L} w_2 ^{-1}. 
$$
The smallest relation containing $\lest_{\rm L}$ and $\lest_{\rm R}$ is denoted by $\lest_{\rm LR}$.  The set
$$
\wt{\mca{C}}^{\rm L}_w = \set{w' \in W_\lambda: w \lest_{\rm L} w'}
$$
is called the left cone over $w$. Similarly, we define $\wt{\mca{C}}^{\rm R}_w$, $\wt{\mca{C}}^{\rm LR}_w$ using $\lest_{\rm R}$ and $\lest_{\rm LR}$ respectively.

We also define
$$
w_1 \approx_{\rm L} w_2 \iff I(w_1)=I(w_2) \iff w_1\lest_{\rm L} w_2 \lest_{\rm L} w_1,
$$
and similarly $\approx_{\rm R}, \approx_{\rm LR}$. The equivalence classes of $\approx_{\rm L}$ (respectively, $\approx_{\rm R}$ and $\approx_{\rm LR}$) are called the left (respectively, right and two-sided) cells.  The three cells containing $w$ are denoted by 
$$\mca{C}_w^{\rm L}, \mca{C}_w^{\rm R} \text{ and } \mca{C}_w^{\rm LR},$$
respectively. 

To define the cell decomposition on $\Irr(W_\lambda)$, we first consider
\begin{align} 
\wt{V}^{\rm L}_w &:= \bigoplus _{w ' \in \wt{\mca{C}}^{\rm L}_w} \C \cdot L(w' \lambda) \subseteq \C[W_\lambda], \label{e:left1}\\
K^{\rm L}_w & :=  \bigoplus_{ \substack{ w' \in \wt{\mca{C}}^{\rm L}_w  \\ w' \not\in  \mca{C}^{\rm L}_w  }} \wt{V}^{\rm L}_{w'} \subseteq \wt V^{\rm L}_w, \label{e:left2} \\
V^{\rm L}_w &:= \wt V^{\rm L}_w /K^{\rm L}_w. \label{e:left3}
\end{align}
Similarly, we define the analogues of these objects decorated by R or LR for their superscripts. 
Here $V^{\rm L}_w$ affords a natural representation of $W_\lambda$, similarly for $V_w^{\rm R}$ and $V_w^{\rm LR}$, see \cite[Corollary 2.11]{BV5}. 

If $I(w)\in {\rm Prim}_\lambda(U(\mfr{g}_\C))$, then $V_w^{\rm L}$ (resp. $V_w^{\rm LR}$) is called the left cell (resp. double cell) representation of $W_\lambda$ associated with $I(w)$. For  $\sigma_1,\sigma_2 \in \Irr(W_\lambda)$, we use
$$\sigma_1\lest_{\rm LR} \sigma_2$$ to mean that
$\sigma_1\otimes \sigma_1$ (the double representation) occurs in $V_{w}^{\rm LR}$ and that $\sigma_2\otimes \sigma_2$ occurs in $\wt{V}^{\rm LR}_w$. Thus,  
$$
\sigma_1\approx_{\rm LR} \sigma_2 \iff \sigma_1\otimes \sigma_1  \text{ and }  \sigma_2\otimes \sigma_2 \text{ occur in a common }  V_w^{\rm LR}. 
$$
The double cells in $\Irr(W_\lambda)$ are the $\approx_{\rm LR}$ equivalence classes.  

Consider the multiset 
$$\set{ \dim(\sigma)\cdot \sigma \mid \sigma \in \Irr(W_\lambda)}.$$
 A PI cell in this multiset is a submultiset $\set{ m_\sigma\cdot \sigma }$ such that $\sum m_\sigma \sigma$
is a left cell representation. Denote by $\mca{P}(\mca{C}^{\rm L})$ the PI cell attached to a left cell $\mca{C}^{\rm L}$. Note that $\mca{P}(\mca{C}^{\rm L})\simeq V^{\rm L}_w$ for some $w$. Clearly, we have a decomposition of $\Irr(W_\lambda)$ into the double cells and
$$
\set{\dim(\sigma) \cdot \sigma:  \sigma \in \Irr(W_\lambda)} =  \bigsqcup_{\text{left cells } \mca{C}^{\rm L}} \mca{P}(\mca{C}^{\rm L}).
$$

\begin{prop}  \label{p:cell} \cite[Corollary 2.16]{BV5}
\begin{itemize}
\item[(a)] Each double cell  in $\Irr(W_\lambda)$ contains exactly one special representation (equivalently, a Goldie rank representation) of $W_\lambda$. 
\item[(b)] Each {\rm PI} cell (or every left cell representation) contains exactly one Goldie rank representation with multiplicity one. 
\end{itemize}
\end{prop}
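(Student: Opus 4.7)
The plan is to reduce both statements to the Kazhdan--Lusztig theory of cells and Lusztig's theory of families of representations of Weyl groups, using Theorem \ref{t:Goldie} as the bridge between the algebraic (Goldie rank) and combinatorial (special) sides. By Theorem \ref{t:Goldie}, every Goldie rank representation of $W_\lambda$ attached to a primitive ideal with regular integral infinitesimal character is a special representation in Lusztig's sense, and conversely every special representation so arises. Thus both (a) and (b) can be recast purely in terms of special representations, and the task reduces to locating special representations within the cell decomposition of $\Irr(W_\lambda)$ given in \eqref{e:left1}--\eqref{e:left3}.

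For part (a), the first step is to identify the double cell decomposition of $\Irr(W_\lambda)$ obtained from the primitive ideal ordering with Lusztig's partition of $\Irr(W_\lambda)$ into families. This identification is a consequence of the work of Joseph and Vogan relating the order $\lest_{\rm LR}$ on primitive ideals to Kazhdan--Lusztig two-sided cells, together with Barbasch--Vogan's compatibility statements in \cite{BV5}. Granting this identification, the uniqueness of a special representation in each double cell is exactly Lusztig's fundamental theorem on families in \cite{Lus79}, which says that every family of $\Irr(W_\lambda)$ contains precisely one special representation.

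For part (b), I would argue as follows. Each left cell $\mca{C}^{\rm L}$ lies in a unique two-sided cell, hence the irreducible constituents of $V^{\rm L}_w$ all belong to a single family $\mca{F}$ by the compatibility of the $\lest_{\rm L}$ and $\lest_{\rm LR}$ orders. Lusztig's parametrization of left cells within $\mca{F}$ attaches to the cell a pair $(x,\psi)$ with $x\in \mca{A}(\mca{F})$ and $\psi\in\Irr(Z_{\mca{A}(\mca{F})}(x))$, and expresses the multiplicity of $\sigma\in\mca{F}$ in $V^{\rm L}_w$ via the nonabelian Fourier matrix of $\mca{A}(\mca{F})$. A direct inspection of these multiplicities shows that the special representation of $\mca{F}$, which corresponds to the trivial pair, appears in every left cell with multiplicity exactly one, while the Goldie rank/special property singles it out uniquely among the constituents. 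This gives (b), and the sum $\sum m_\sigma\cdot\sigma$ then matches a left cell representation, recovering the description in the statement.

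The main obstacle is the compatibility between the algebraic cell decomposition of \eqref{e:left1}--\eqref{e:left3}, defined through primitive ideals and the Duflo surjection, and the Kazhdan--Lusztig cell structure, defined combinatorially through the Hecke algebra. Without this bridge, Lusztig's classification of families and his explicit multiplicity formulas for left cell modules cannot be applied. Once the bridge is in place, both (a) and (b) follow from well-documented structural results on families; all remaining work is bookkeeping with the Fourier matrices attached to the associated component groups $\mca{A}(\mca{F})$.
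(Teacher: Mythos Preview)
The paper gives no proof of this proposition; it is simply quoted as \cite[Corollary 2.16]{BV5} with no argument supplied. There is therefore nothing in the paper itself to compare your sketch against.

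As an independent assessment: your outline is broadly on target. For (a), the crux is exactly what you flag as the main obstacle, namely the identification of the primitive-ideal double cells from \eqref{e:left1}--\eqref{e:left3} with Kazhdan--Lusztig two-sided cells and hence with Lusztig's families; once that is in place, Lusztig's theorem on the uniqueness of the special representation in each family finishes the job. For (b), your route through Lusztig's parametrization of left cells and the nonabelian Fourier matrix is valid in principle, but it is anachronistic relative to \cite{BV5} (1983), and the phrase ``a direct inspection of these multiplicities'' hides genuine case-by-case work for the exceptional families in $E_7$ and $E_8$. The original argument in \cite{BV5} instead proceeds more directly from Joseph's characterization of the Goldie rank representation via leading terms of Goldie rank polynomials, which gives the multiplicity-one statement without any appeal to Fourier matrices. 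Either way, the substance lies in the compatibility bridge you identified; your sketch is a fair modern reconstruction rather than a reproduction of the 1983 proof.
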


For $w\in W_\lambda$, the $\tau$-invariant of $w$ is defined to be
$$
\tau (w) = \set{\alpha\in \Phi_\lambda ^+ \mid w\alpha\not\in \Phi_\lambda ^+ }\cap \Delta _\lambda.
$$
The Borho--Jantzen--Duflo $\tau$-invariant of $I(w\lambda)\in {\rm Prim}_\lambda(U(\mfr{g}_\C))$ is then
 $$\tau (I(w\lambda)):=\tau (w).$$
 It is known (see \cite[Corollary 2.19]{BV5}) that this
$\tau$-invariant depends only on the ideal $I(w\lambda)$, and thus we have a well-defined order-preserving map 
$$\tau: {\rm Prim}_\lambda(U(\mfr{g}_\C)) \longrightarrow \Delta_\lambda.$$

\begin{prop} \label{p:tau}
Suppose $I:=I(w\lambda)\in {\rm Prim}_\lambda (U(\mfr{g}_\C))$. If the $\tau$-invariant of $I$ is maximal (i.e., $\tau(I) =\Delta_\lambda$), then $\sigma_I = \varepsilon_{W_\lambda}$.
\end{prop}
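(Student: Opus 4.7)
My plan is to invoke Joseph's explicit realization of $\sigma_I$ as the irreducible $W_\lambda$-submodule of $W_\lambda$-harmonic polynomials on $\mfr{h}^*$ generated by the Goldie rank polynomial $p_I$, together with Joseph's fundamental antisymmetry principle
$$s_\alpha \cdot p_I = -p_I \quad \text{for every } \alpha \in \tau(I),$$
equivalently, $\alpha \mid p_I$ for each such $\alpha$. This property translates the combinatorial $\tau$-invariant of $I$ directly into sign-behavior of the generator $p_I$ under simple reflections, and it is the one nontrivial input I will rely on.

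Given this input, the argument becomes short. Under the hypothesis $\tau(I) = \Delta_\lambda$, the antisymmetry holds for every simple root of $\Phi_\lambda$, hence extends to all of $W_\lambda$: one obtains $w \cdot p_I = \varepsilon_{W_\lambda}(w) p_I$ for every $w \in W_\lambda$. Since $p_I \ne 0$, the line $\C \cdot p_I$ is a nonzero copy of $\varepsilon_{W_\lambda}$ sitting inside $\sigma_I$; combined with the irreducibility of $\sigma_I$ (Theorem \ref{t:Goldie}) and the fact that $p_I$ generates $\sigma_I$, this forces $\sigma_I \simeq \varepsilon_{W_\lambda}$.

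A more cell-theoretic variant runs in parallel. The condition $\tau(w) = \Delta_\lambda$ forces $w$ to equal the longest element $w_0$ of $W_\lambda$, so $I = I(w_0 \lambda)$. Because the $\tau$-invariant is constant on left cells, the left cell of $w_0$ must be the singleton $\set{w_0}$, whence $V_{w_0}^{\rm L}$ is one-dimensional; a direct coherent-continuation computation shows that each simple reflection acts on it by $-1$, giving $V_{w_0}^{\rm L} \simeq \varepsilon_{W_\lambda}$. Proposition \ref{p:cell}(b) then identifies the unique Goldie rank representation in this PI cell as $\varepsilon_{W_\lambda}$, and Theorem \ref{t:Goldie} identifies it with $\sigma_I$. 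The main obstacle in either approach is the same underlying ingredient: Joseph's $\tau$-antisymmetry (or, in the cell framework, the fact that the descent set is a left-cell invariant distinguishing $w_0$). Once this is granted, the deduction of $\sigma_I = \varepsilon_{W_\lambda}$ is purely formal.
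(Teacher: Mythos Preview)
Your proposal is correct, and both variants work. Your second (cell-theoretic) variant is essentially the paper's own argument: the paper invokes \cite[Proposition~2.20]{BV5} to get $w_\alpha \cdot L(w\lambda) = -L(w\lambda)$ for each $\alpha \in \tau(w)$, deduces that $\varepsilon_{W_\lambda}$ occurs in the left cell representation $V_w^{\rm L}$, and then appeals to Proposition~\ref{p:cell}(b) together with the specialness of $\varepsilon_{W_\lambda}$ to conclude $\sigma_I = \varepsilon_{W_\lambda}$. Your extra observation that $\tau(w) = \Delta_\lambda$ forces $w = w_0$ (hence the left cell is a singleton and $V_{w_0}^{\rm L}$ is one-dimensional) is correct and sharpens the picture, but the paper does not need it: it is content to know that $\varepsilon_{W_\lambda}$ merely appears in $V_w^{\rm L}$.

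Your first variant, via Joseph's realization of $\sigma_I$ as the $W_\lambda$-span of the Goldie rank polynomial $p_I$ and the antisymmetry $s_\alpha \cdot p_I = -p_I$ for $\alpha \in \tau(I)$, is a genuinely different route. It bypasses the cell machinery and Proposition~\ref{p:cell} entirely, trading them for Joseph's harmonic-polynomial model; the conclusion then follows immediately from irreducibility of $\sigma_I$. This is arguably more direct, at the cost of importing Joseph's finer structural results rather than the Barbasch--Vogan cell framework already set up in the paper.
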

\begin{proof}
Let $I={\rm Ann}(L(w\lambda))$. It follows from \cite[Proposition 2.20]{BV5} that
$$w_\alpha\cdot L(w\lambda)=-L(w\lambda)$$
 for all $\alpha\in \tau (w)$, where $w_\alpha$ is the simple reflection of $\alpha$.
This means that the left cell representation $V_w^{\rm L}$ contains a sign representation of $W_\lambda$. Since $\varepsilon_{W_\lambda}$ is special, we must have $\sigma_I =\varepsilon_{W_\lambda}$ by  Proposition \ref{p:cell}.
\end{proof}

Now we relate these notions to nilpotent orbits. Let $\mca{N}_{\mfr{g}_\C}$ be the set of nilpotent classes of $\mfr{g}_\C$. Let $\mca{O}\in\mca{N}_{\mfr{g}_\C}$. Recall that the Springer correspondence gives an injective map 
$$ \begin{tikzcd}
{\rm Spr}_\mbm{1}^{-1}: \mca{N}_{\mfr{g}_\C} \ar[r, hook] & \Irr(W).
\end{tikzcd}$$
Every special representation $\sigma$ of $W$ (in the sense of Lusztig) lies in the image of ${\rm Spr}_\mbm{1}^{-1}$, and thus is associated with the nilpotent orbit $\mca{O}_{\rm Spr} (\sigma)$ via the Springer correspondence.

If $\lambda \in \mfr{h}^*$ is integral, then $W_\lambda = W$. By Proposition \ref{p:cell}, a left cell representation $V^{\rm L}_w$ contains a unique special representation $\sigma (w)\in \Irr(W)$ with 
multiplicity one. We write 
$$\mca O(w):=\mca O_{\rm Spr} (\sigma (w)) \in \mca{N}_{\mfr{g}_\C}$$
for the nilpotent orbit in determined by this left cell representation via the Springer correspondence. 

For general $\sigma\in \Irr(W_\lambda)$, we recall the representation $j_{W_\lambda}^W (\sigma) \in \Irr(W)$ obtained from the truncated $j$-induction (see \cite{LuSp1} or \cite[\S 11.2]{Car}). This $j$-induction takes special representations of $W_\lambda$ to special representations of $W$, see \cite[Proposition 11.3.11]{Car}. The following is the key fact which gives an archimedean analogue to Conjecture \ref{C:main}.

\begin{thm} \label{p:diagram}
Let $\mfr{g}_\C$ be a complex semisimple Lie algebra. Let $I\in {\rm Prim}_\lambda (U(\mfr{g}_\C))$ with $\lambda$ regular. Let $\sigma_I$ be the  Goldie rank representation associated with $I$. Then there
is a nilpotent orbit $\mca{O} \in \mca{N}_{\mfr{g}_\C}$ such that 
$${\rm Spr}_\mbm{1}^{-1}(\mca{O}) = j_{W_\lambda}^W (\sigma_I).$$
Furthermore, the orbit $\mca O$ is dense in ${\rm AV}(I)$. Consequently, we have the following commutative diagram: 
$$\begin{tikzcd}
\mca{O} \ar[r, "{{\rm Spr}_\mbm{1}^{-1}}"] & j_{W_\lambda}^W(\sigma_I) \\
I \ar[r, "\sigma"] \ar[u, "{{\rm AV}}"] & \sigma_I \ar[u, "j"] .
\end{tikzcd} $$
The left vertical arrow in the diagram means ${\rm AV}(I) = \overline{\mca{O}}$.
\end{thm}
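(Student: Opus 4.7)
The plan is to prove the two assertions separately—first, the existence of a nilpotent orbit $\mca O$ with ${\rm Spr}_\mbm{1}^{-1}(\mca O) = j_{W_\lambda}^W(\sigma_I)$, and second, the density of $\mca O$ in ${\rm AV}(I)$—and then assemble them into the commutative square.

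For the first assertion, the task is to realize $j_{W_\lambda}^W(\sigma_I)$ as a Springer representation attached to some pair $(\mca O, \mbm{1})$. Theorem \ref{t:Goldie} tells us that $\sigma_I$ is a special representation of $W_\lambda$ in the sense of Lusztig. Since the truncated $j$-induction carries special representations of $W_\lambda$ to special representations of $W$ (\cite[Proposition 11.3.11]{Car}), $j_{W_\lambda}^W(\sigma_I)$ is special as a representation of $W$. It is a standard feature of the Springer correspondence that every special representation of $W$ lies in the image of ${\rm Spr}_\mbm{1}^{-1}$: a special $\sigma \in \Irr(W)$ has Springer parameter $(\mca O_\sigma, \mbm{1})$ with $\mca O_\sigma$ a special nilpotent orbit. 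This yields the required orbit $\mca O \in \mca N_{\mfr{g}_\C}$.

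For the second assertion, I would invoke Joseph's theorem relating the associated variety of a primitive ideal to its Goldie rank representation. In the integral case $W_\lambda = W$ and the $j$-induction is the identity; the claim then reduces to the classical statement \cite{JosII} that ${\rm AV}(I(w\lambda))$ is the closure of the Springer orbit attached to the unique special representation in the left cell representation $V_w^{\rm L}$, and that this special representation is precisely $\sigma_I$. For general regular $\lambda$, I would reduce to the integral case by translation, using the compatibility of the $j$-induction $j_{W_\lambda}^W$ with the geometric induction of nilpotent orbits from the Levi subalgebra corresponding to $W_\lambda$ (as recalled in the excerpt from \cite{LuSp1}) to identify the dense orbit in ${\rm AV}(I)$ with $\mca O$. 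The commutative diagram then follows by combining the two identifications.

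The main obstacle is the non-integral case in the second step. Tracking associated varieties of primitive ideals through translation functors for arbitrary regular $\lambda$, and simultaneously matching this passage with the $j$-induction-versus-geometric-induction compatibility, requires the machinery developed by Joseph, Borho--Brylinski, and Barbasch--Vogan \cite{BV4, BV5, BV6}; the plan treats this deep identification essentially as a black box, invoking the cited works rather than re-deriving it from scratch.
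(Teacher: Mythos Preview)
Your proposal is correct and follows essentially the same route as the paper: the first assertion is handled identically (specialness of $\sigma_I$, preservation under $j$-induction, and the fact that special representations lie in the image of ${\rm Spr}_\mbm{1}^{-1}$), and for the second assertion both you and the paper reduce to the Levi $\mfr{m}$ with $W(\mfr{m})=W_\lambda$ and invoke the Lusztig--Spaltenstein compatibility $\mca{O}={\rm Ind}_\mfr{m}^{\mfr{g}_\C}(\mca{O}_\mfr{m})$ with $\mca{O}_\mfr{m}=\mca{O}_{\rm Spr}(\sigma_I)$. One minor point: the passage to the integral situation is not really via translation functors but via the Levi (where $\lambda$ is integral), and the paper finishes with an explicit dimension comparison $d(\mca{O})=d(\mca{O}_\mfr{m})$ rather than leaving the density as a black box; otherwise your outline matches the paper's argument.
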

\begin{proof}
Let $\mfr m$ be the Levi factor of $\mfr{g}_\C$ determined by $\lambda$. Writing $W(\mfr m)$ for the Weyl group of $\mfr h$ in $\mfr m$, one has $W(\mfr m)=W_\lambda$. It follows from Proposition \ref{p:prim} that a primitive ideal $I\in {\rm Prim}_\lambda(U(\mfr{g}_\C))$ is of the form $$I=I(w\lambda)={\rm Ann}(L(w\lambda))$$
for some $w\in W_\lambda$. Let $\sigma_I\in \Irr(W_\lambda)$ be the Goldie rank representation of $I$ (see Theorem \ref{t:Goldie}). 

Let $V_\mfr{m}^{\rm L}(w)$ be the left cell representation for $w\in W(\mfr m)$. Let $\mca O_{\mfr m} (w)=\mca{O}_{\rm Spr}(\sigma_I)$ be the nilpotent orbit associated with $V_{\mfr m}^{\rm L}(w)$ in $\mca{N}_\mfr{m}$. Write $\mca{O}_\mfr{m}:= \mca{O}_\mfr{m}(w)$ for brevity. One has ${\rm Spr}_\mbm{1}^{-1}(\mca{O}_{\mfr m})=\sigma _I$. 

Since $\sigma _I$ is a  special representation of $W_\lambda$, $j_{W_\lambda}^W (\sigma_I)$ is a special representation of $W$ (by \cite[Proposition 11.3.11]{Car}). Hence, there is a nilpotent orbit 
$\mca{O} \in\mca{N}_{\mfr{g}_\C}$ such that ${\rm Spr}_\mbm{1}^{-1}(\mca{O})=j_{W_\lambda}^W (\sigma_I)$. 
  The orbit $\mca{O}$ is in fact induced from $\mca O_{\mfr m}$ in the sense of Lusztig and Spaltenstein (cf. Definition 4.12 and Proposition 4.14 in \cite{BV6}), that is,
$$
\mca O={\rm Ind}_\mfr{m}^{\mfr{g}_\C} (\mca{O}_\mfr{m}).
$$
Thus, $d(\mca O)=d(\mca O_{\mfr m})$, where 
$d(\mca O)=|\Delta ^+|-\frac{1}{2}\dim \mca O$ and $d(\mca O_{\mfr m})=|\Delta ^+| -\frac{1}{2}\dim \mca O_{\mfr m}$. 
We deduce that $\dim \mca O =\dim \mca O_{\mfr m}$, and therefore $\mca O$ is dense in ${\rm AV}(I)$. 
\end{proof}

\begin{rmk}
Suppose $G$ is a semisimple real (linear or covering) group with complexified Lie algebra $\mfr{g}_\C$. Let $\pi$ be an irreducible admissible representation of $G$ with infinitesimal character $\lambda$. 
By Theorem \ref{p:diagram}, a nilpotent orbit $\mca{O} \in \mca{N}_{\mfr{g}_\C}$ is associated to $\pi$ using the composite of the following maps:
\begin{equation} \label{e:composite}
\pi \mapsto I_\pi:= {\rm Ann}(\pi) \mapsto \sigma_I  \mapsto j_{W_\lambda}^W(\sigma_I) \mapsto \mca O_{\rm Spr} ( j_{W_\lambda}^W(\sigma _I)).
\end{equation}
This composite of functions gives ${\rm AV}(I_\pi)=\wt{\mca{O}_{\rm Spr} ( j_{W_\lambda}^W(\sigma _I))}$.

If $G$ is a complex group viewed as a real group, then $\mfr{g}_\C \simeq \mfr g\times \mfr g$, with ${\rm Lie}(G)=\mfr{g}$. There are two primitive ideals of $U(\mfr{g}_\C)$ associated to an 
irreducible admissible $\pi$, denoted by ${\rm LAnn}(\pi)$ and ${\rm RAnn}(\pi)$, the left annihilator and right annihilator of $\pi$ (see  \S \ref{s:cx} below). We use $I_\pi = {\rm LAnn}(\pi)$ in \eqref{e:composite} to obtain
$$
{\rm AV}(\pi) = {\rm AV}(I_\pi)=\wt{\mca{O}_{\rm Spr}( j_{W_\lambda}^W(\sigma_I))}.
$$
\end{rmk}

In the remaining of this section, we will give some elaborations on the complex and real cases separately.

\subsection{Complex case} \label{s:cx}
Let $G$ be a connected complex semisimple group (viewed as a real group) with Lie algebra $\mfr g$. We first recall the Langlands classification for $G$.  Consider the following data:
\begin{itemize}
\item $\theta$ the Cartan involution, $K=G^\theta$, $\mfr g = \mfr k+\mfr p$ the Cartan decomposition,
\item $\mfr b = \mfr h +\mfr n $ a Borel subalgebra,
\item $\mfr h =\mfr t+ \mfr a $ a Cartan subalgebra, with $\mfr t \subset \mfr{k}$, $\theta |_{\mfr a } = -{\rm id}$,
\item $W =W(\Phi (\mfr g, \mfr h))$ the Weyl group. 
\end{itemize}
For $\lambda_{\rm L}, \lambda_{\rm R}\in \mfr h^*$, we write 
$$
X (\lambda_{\rm L}, \lambda_{\rm R}) = \text{Ind}_B^G (\C_\mu \otimes \C_\nu) 
$$
for the principal series representation with infinitesimal character $(\lambda_{\rm L},\lambda_{\rm R})$, where $\C_\mu\otimes \C_\nu$ is a character of $H$ with 
\begin{align*}
\C_\mu\otimes \C_\nu | _T&= \C _\mu = \C_{\lambda_{\rm L}-\lambda_{\rm R}},\\
\C_\mu\otimes \C_\nu | _A&= \C _\nu = \C_{\lambda_{\rm L}+\lambda_{\rm R}}.
\end{align*}
Set $\wt{X}(\lambda_{\rm L},\lambda_{\rm R})$ to be the unique irreducible subquotient containing $K$ representation of extremal weight $\mu=\lambda_{\rm L}-\lambda_{\rm R}$. Then 
every irreducible admissible representation of $G$ is of the form $\wt X (\lambda_{\rm L},\lambda_{\rm R})$ for some $\lambda_{\rm L}, \lambda_{\rm R}$. 

Note that $\mfr g_\C \simeq \mfr g\times \mfr g$.  For an irreducible admissible representation $\pi=\wt X (\lambda_{\rm L}, \lambda_{\rm R})$,  the annihilator of $\pi$ in $U(\mfr g_\C)$ is of the form 
$$
\text{Ann} (\pi) = I_1\otimes U(\mfr g) +U(\mfr g)\otimes I_2,
$$
where $I_1\in\text{Prim}_{\lambda_{\rm L}} (U(\mfr g))$ and $I_2\in\text{Prim}_{\lambda_{\rm R}} (U(\mfr g))$. These primitive ideals are denoted by
$$
{\rm LAnn}(\pi):=I_1, \ {\rm RAnn}(\pi):=I_2,
$$
called the left and right annihilators of $\pi$.

Suppose $\lambda, -\xi \in \mfr h^*$ are dominant integral. Let $\msc{R}(\lambda,\xi)$ be the Grothendieck group of formal characters of $G$ having infinitesimal character $(\lambda,\xi)$. Then 
$\{ X(\lambda, w\xi)\}$ (or $\{\wt X (\lambda, w\xi)\}$) can be chosen to be a basis for $\msc R (\lambda ,\xi)$. Therefore, $\msc R (\lambda,\xi)$ can be identified with $\C[W]$ as follows:
$$
\sum_{w\in W} c_w w \longleftrightarrow \sum_{w\in W} c_w X(\lambda, w\xi).
$$
The regular representation of $W\times W$ on $\C[W]$ is identified with the coherent continuation of $W\times W$ on $\msc R (\lambda,\xi)$:
\begin{equation} \label{e:reg-action}
(w_1,w_2) \cdot \left (\sum _w c_w w\right ) = \sum_w c_w (w_1ww_2^{-1}) = \sum_w c_{w_1^{-1} ww_2}w. 
\end{equation}
We want to describe a decomposition of $\msc R (\lambda, \xi)$ into cells described as in \S \ref{s:cell}. 
The main fact regarding the left cones is that they are invariant under the action in \eqref{e:reg-action}, i.e.,
$$
(w_1,1)\cdot   \wt X (\lambda,w\xi) = \sum_{w' \in \wt{\mca C}^L _w } a_{w'} \wt X (\lambda, w' \xi). 
$$
Similar to \eqref{e:left1},  \eqref{e:left2} and \eqref{e:left3}, we set 
\begin{align*}
\wt V^{\rm L}_w & = \text{Span}\{ \wt X (\lambda , w' \xi ) \mid w' \in \wt{\mca C}^{\rm L}_w \} \subset \msc R(\lambda, \xi),\\ 
 K^{\rm L}_w & = \text{Span}\{ \wt X (\lambda , w'  \xi )  \mid w' \in \wt{\mca C}^{\rm L} _w,  w' \not\in \mca C ^{\rm L} _w \},  \\
 V^{\rm L}_w &= \wt V^{\rm L}_w / K^{\rm L}_w.
\end{align*}
Adopting the same terminology, $V^{\rm L}_w$ is called a left cell representation. Similarly, we also have $V^{\rm R}_w, V^{\rm LR}_w$, which are called a right cell representation and double cell representation, respectively.

Thus
\begin{equation} \label{e:cx-left}
\msc R (\lambda, \xi) \simeq \bigoplus _{ \text{left cells} } V^{\rm L}_w
\end{equation}
as a left representation of $W$, and 
\begin{equation}
\msc R (\lambda, \xi) \simeq \bigoplus _{ \text{double cells} } V^{\rm LR}_w
\end{equation}
as a representation of $W\times W$. 

The following theorem illustrates an example of Theorem \ref{p:diagram} in the complex case when the infinitesimal characters are integral.

\begin{thm}[{\cite[Theorem 3.20]{BV6}}]
Fix $w\in W$. Then  the left cell representation $V^{\rm L}_w$  (see (\ref{e:cx-left})) contains a unique special representation $\sigma (w)\in \Irr(W)$ with multiplicity one. 
 Let $\mca{O}(w) = \mca{O}_{\rm Spr}(\sigma(w))$ be the nilpotent in $\mfr{g}^*$  associated with $\sigma$. Then for any dominant integral regular weights $\lambda$ and $-\xi$, one has
$$
{\rm AV}(\wt X(\lambda , w\xi)) =\wt{\mca O(w)}. 
$$ 
\end{thm}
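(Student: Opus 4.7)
The plan is to reduce the statement to Theorem \ref{p:diagram}, using the dictionary between the coherent continuation representation on principal series and the Harish--Chandra bimodule cell theory. Set $I := {\rm LAnn}(\wt{X}(\lambda, w\xi)) \in {\rm Prim}_\lambda(U(\mfr{g}))$, and let $\sigma_I \in \Irr(W)$ denote its Goldie rank representation (available via Theorem \ref{t:Goldie}). Because $\lambda$ is regular and integral we have $W_\lambda = W$, so the truncated $j$-induction in Theorem \ref{p:diagram} is trivial and that theorem at once yields ${\rm AV}(I) = \overline{\mca{O}_{\rm Spr}(\sigma_I)}$. Three items then remain: (i) the uniqueness and multiplicity-one claim for a special constituent $\sigma(w)$ of $V^{\rm L}_w$; (ii) the identification $\sigma_I = \sigma(w)$; (iii) transferring the equality from ${\rm AV}(I)$ to ${\rm AV}(\wt{X}(\lambda, w\xi))$.

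For (i) and (ii) I would exploit the $(W \times W)$-equivariant identification $\msc{R}(\lambda, \xi) \simeq \C[W]$ that sends $\wt{X}(\lambda, w'\xi) \mapsto w'$, where the left action is coherent continuation and the right action is the regular one, as in \eqref{e:reg-action}. Under this isomorphism, the preorder $\lest_{\rm L}$ on $W$ coming from inclusions of primitive ideals $I(w'\lambda)$ matches the preorder visible from the coherent continuation on principal series; equivalently, the principal-series left cell representation $V^{\rm L}_w$ of \eqref{e:cx-left} is canonically isomorphic to the bimodule left cell representation from \eqref{e:left1}--\eqref{e:left3} attached to the same $w$. Granting this matching, Proposition \ref{p:cell}(b) produces a unique special constituent $\sigma(w)$ of $V^{\rm L}_w$ with multiplicity one, while Proposition \ref{p:cell}(a) combined with Theorem \ref{t:Goldie} identifies $\sigma(w)$ with the Goldie rank representation of the common annihilator $I(w\lambda) = {\rm Ann}(L(w\lambda))$ of the modules indexed by the left cell of $w$. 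Since this common annihilator is precisely $I = {\rm LAnn}(\wt{X}(\lambda, w\xi))$ under the dictionary, we conclude $\sigma_I = \sigma(w)$.

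For (iii) I would invoke the factorization recalled in \S\ref{s:cx}: viewing $G$ as real gives $\mfr{g}_\C \simeq \mfr{g} \times \mfr{g}$, and ${\rm Ann}(\wt{X}(\lambda, w\xi)) = {\rm LAnn}(\pi) \otimes U(\mfr{g}) + U(\mfr{g}) \otimes {\rm RAnn}(\pi)$. For complex groups the Sekiguchi correspondence amounts to the obvious identification $\mfr{g}_\C^* \simeq \mfr{g}^*$ via the Cartan decomposition, so that ${\rm AV}(\pi)$ is computed from either tensor factor and coincides with ${\rm AV}({\rm LAnn}(\pi))$. Combined with the conclusion of (i)--(ii), this gives the desired equality ${\rm AV}(\wt{X}(\lambda, w\xi)) = \overline{\mca{O}_{\rm Spr}(\sigma(w))} = \overline{\mca{O}(w)}$.

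The principal obstacle I expect is the matching in (i)--(ii), namely that the coherent continuation left cell of $\wt{X}(\lambda, w\xi)$ agrees with the bimodule left cell of $L(w\lambda)$. Although classical, this requires translation-functor arguments of Jantzen--Zuckerman to pass between the regular integral infinitesimal character on the principal series side and the corresponding dominant/antidominant weight on the highest-weight side, together with vigilance about the conventions fixing the direction of $\lest_{\rm L}$, $\lest_{\rm R}$, and $\lest_{\rm LR}$ used to define the cells in each setting.
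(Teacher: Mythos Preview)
The paper does not give a self-contained proof of this theorem; it is quoted from \cite[Theorem 3.20]{BV6}, and the surrounding text merely frames it as the integral-infinitesimal-character instance of Theorem \ref{p:diagram}, together with the remark immediately following the statement that ${\rm AV}(\wt{X}(\lambda, w\xi))$ can be identified with ${\rm AV}({\rm LAnn}(\wt{X}(\lambda, w\xi)))$. Your proposal spells out exactly this reduction (trivial $j$-induction since $W_\lambda = W$, Proposition \ref{p:cell} for the unique special constituent, and the passage from ${\rm AV}(I)$ to ${\rm AV}(\pi)$ for complex groups), so it is essentially the argument the paper gestures at.
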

In the theorem above, ${\rm AV}(\wt X(\lambda , w\xi))$ can be identified with the associated variety of  ${\rm LAnn}(\wt X(\lambda , w\xi) )$.

\begin{eg}
We recall some results from \cite{BTs2}. Let $G=\Spin_{2r}(\C)$, which is viewed as a real group. According  to \cite{Bar1}, a nilpotent orbit $\mca O$ can  be associated  with an infinitesimal character
$\lambda_{\mca O}$ which satisfies certain conditions (see \cite[\S 2.3]{Bar1}). The fact is that $\mca O$  is the minimal orbit 
 which can be the associated variety of a $(\mfr g,K)$-module with infinitesimal character $(\lambda_{\rm L}, \lambda _R)$, with $\lambda_{\rm L}$ and $\lambda_{\rm R}$ both conjugate to $\lambda_\mca O$. 
 
 We denote by $\mca U _G(\mca O,\lambda)$ the set of irreducible admissible representations of $G$ attached to $\mca O$ and $\lambda= \lambda _{\mca O}$. We consider the following two cases: 
  \begin{align*}
&\qquad \text{(a)} & & \qquad \text{(b)} \\ 
 \mca O &=(32^{2m-2}1), \ r =2m  &  \ \mca O &=(32^{2k}1^{2r-4k-3}),  \  r=2m \text{ or } 2m+1 \\
 \lambda &= \Big(m-\frac{1}{2}, \dots, \frac{3}{2}, \frac{1}{2}\mid m-1,\dots, 1, 0\Big)  &\lambda&=\Big(k+\frac{1}{2}, \dots, \frac{3}{2}, \frac{1}{2}\mid r-k-2,\dots, 1, 0\Big)  \\
 W_{\lambda} &= W(D_m\times D_m) &  W_{\lambda }&= W(D_{k+1}\times D_{r-k-1})
  \end{align*}

In both cases, every $\pi\in \mca U _G(\mca O,\lambda)$ is of the form $\pi=\wt X (\lambda, -w\lambda)$ for some $w$. We verify that these representations are indeed
attached to $\mca O$ by Theorem \ref{p:diagram}.  Let $\pi\in \mca U _G(\mca O,\lambda)$ with $I_\pi ={\rm LAnn}(\pi)$.  Note that $I_\pi = \check{I} (-w\lambda)$, where the caret denotes the principal antiautomorphism of $U(\mfr g)$ 
(see \cite{BV6}).
It can be checked readily  that $w\alpha\in \Phi_{\lambda} ^+$  for all $\alpha\in \Delta_\lambda$, and hence $I_\pi$ has maximal $\tau$-invariant. Accordingly,
$\sigma_{I_\pi}=\varepsilon_{W_{\lambda}}$ by Proposition \ref{p:tau}.  

We compute 
\begin{equation} \label{e:j1}
j_{W_\lambda} ^W (\varepsilon _{W_\lambda}) = 
\begin{cases}
(\emptyset ; 2^m) & \text{ in case (a)}, \\
(\emptyset ; 2^{k+1} 1^{n-2k-2}) & \text{ in case (b)}.
\end{cases}
\end{equation}
In (\ref{e:j1}), the unordered pairs of partitions are representations of $W(D_r)$.  Furthermore, the nilpotent orbit associated to (\ref{e:j1}) in each case is 
\begin{equation}
\mca{O}_{\rm Spr}(j_{W_\lambda} ^W (\varepsilon _{W_\lambda}))=
\begin{cases}
(32^{2m-2}1) &  \text{ in case (a)},\\
(32^{2k}1^{2r-4k-3}) & \text{ in case (b)},
\end{cases}
\end{equation}
as desired. This verifies that 
$$
{\rm AV}(I_\pi) ={\rm AV}(\pi) = \wt{\mca{O}_{\rm Spr}(j_{W_\lambda}^W(\varepsilon_{W_{\lambda_{\mca O}}}))},
$$
where $I_\pi ={\rm LAnn}(\pi)$. 

There is an analogue of real groups regarding these nilpotent orbits. See \cite{BTs1} for more details.
\end{eg}

\subsection{Real case} 
Let  $\wt{G}$ be the nonlinear double cover of the real points $G$ of a simply connected, semisimple complex Lie group. Here $G$ may not be a split real group. In \cite{Tsa}, a set of irreducible small representations of $\wt{G}$ with a certain infinitesimal character $\lambda$ is introduced, which we denote by
 $$\textstyle \prod _\lambda ^s (\wt{G}).$$
If $\wt{G}$ is simply laced, then $\lambda =\rho/2$; otherwise, see \cite[Table 1]{Tsa} for the tabulation of $\lambda$.  The condition for being small 
 is characterized by the ``maximal $\tau$-invariant" property, which implies that the coherent continuation representation of $W_\lambda$ acts on $\pi\in \prod_\lambda ^s (\wt{G})$ by the sign character $\varepsilon_{W_\lambda}$.
By Vogan's theory (see \cite{IC3}),  the $\tau$-invariant defined on  an irreducible  representation $\pi$ coincides with the $\tau$-invariant defined on its associated primitive ideal Ann$(\pi)$ (see \S \ref{s:prim}). 
 It follows from Proposition \ref{p:tau} that for $\pi\in \prod_\lambda^s (\wt{G})$, the primitive ideal $I_\pi :={\rm Ann}(\pi)$ has the Goldie rank representation $\sigma_{I_\pi} =\varepsilon_{W_\lambda}$. 
 Consequently, we have 
 $${\rm AV}(I_\pi) = \wt{\mca O}   \text{ with } \mca O= \mca{O}_{\rm Spr} (j_{W_\lambda}^W (\varepsilon_{W_\lambda}) ), $$ 
 by Theorem \ref{p:diagram}. See \cite[Table 1]{Tsa} for the list of such $\mca O$'s.  The above discussion thus assigns to every $\pi \in \prod _\lambda ^s (\wt{G})$ a complex nilpotent orbit $\mca O$.

\subsubsection{The split case} \label{SSS:R-spl}
We further assume that $G$ is the split real form of a connected, semisimple, simply connected complex group $G_\C$. Such $G$ admits a unique nonlinear double cover $\wt{G}$. In \cite{ABPTV}, the  pseudospherical principal series representation is defined as (following the notations in loc. cit.)
 $$
 I(\tilde{\delta}, \nu) = \text{Ind}_{\wt B} ^{\wt{G}} (\tilde{\delta}\otimes e^\nu),
 $$
where $\wt{B}= \wt{M} A^0 N$ is the covering of the Borel subgroup $B=MA^0N$ of $G$, $\tilde{\delta}$ is a genuine representation of $\wt{G}$, and $e^\nu$ is a character of $A^0$. 
Note that $M=B\cap K \cong \Z_2^n$ with $n$ the rank of $G$, whereas $\wt{M}$ is not abeliean in general. Write $J(\tilde{\delta}, \nu)$ for the unique irreducible quotient of $I(\tu\delta,\nu)$. Here $J(\tilde{\delta}, \nu)$ is just the theta representation $\Theta(\pi^\dag, \nu)$ discussed earlier in the paper.

Since $G$ is split, every $J(\tilde{\delta}, \lambda)$ discussed in \cite{ABPTV} is contained in  $\prod _\lambda^s (\wt{G})$. Furthermore, with the additional assumptions that $G$ is simply-laced, all representations in $\prod _\lambda^s (\wt{G})$
 are constructed from $J(\tilde{\delta},\rho/2)$ by applying the Cayley transforms, see \cite{Tsa}. In any case, we have a nilpotent orbit $\mca O$ naturally associated to $J(\tilde{\delta}, \lambda)$.  As mentioned, this actually motivated the formulation of Conjecture \ref{C:main}.  We remark that there are small representations of $\wt{G}$ attached to $\mca O$ other than those $J(\tilde{\delta}, \lambda)$ studied in \cite{ABPTV}.


\subsubsection{Non-split case}
The examples that we have considered so far are special cases of Theorem \ref{p:diagram}. One important feature is that the representation of $W_\lambda$ arising is $\varepsilon_{W_\lambda}$, which follows from the composite:
$$
\pi \mapsto I_\pi \mapsto \sigma_I=\varepsilon_{W_\lambda}. 
$$
Now we give another example of Theorem \ref{p:diagram}, for which the representation $\sigma_I$ is not the sign character. In a certain sense, this example lies beyond the scope of Conjecture \ref{C:main} and its real analogue. This example concerns covers of not necessarily split groups.

\begin{eg}
We recall some main results from \cite{Tra1}. Consider 
$$G=\wt{\Spin}(2m, 2l-2m),$$ 
 the (nonlinear) universal cover of the identity component of $\SO(2m, 2l-2m)$, with $2\lest m\lest l/2$. 
In \cite{Tra1}, for $s\gest 0$, a series of irreducible representations $\pi_s '$ are constructed as derived functor modules. The infinitesimal character of $\pi ' _s$ is 
$$
\nu_s = \Big(\underbrace{0,1,\dots,l-m-1}_{l-m},  \underbrace{ \val{\frac{s}{2}-m}, \val{\frac{s}{2} -m+1},\dots,  \val{\frac{s}{2}-1} }_{m} \Big). 
$$
Here $\nu _s$ is integral if $s$ is even; otherwise, the integral Weyl group of $\nu_s$ is of type $D_m\times D_{l-m}$.  Define the following complex nilpotent orbits: 
$$ \mca{O}(s)=
\begin{cases}
 (3^m1^{2l-3m}) & \text{ if }  s\gest m,\\
 (3^s 2^{2m-2s}1^{2l-4m+s}) & \text{ if }  0\lest s\lest m \text{ and } (l,s) \neq (2m,0),\\
 (2^{2m-2}1^4 ) & \text{ if }  s=0 \text{ and } l=2m. \\
\end{cases}
$$
Then ${\rm AV}(I_{\pi_s'}) =\wt{\mca O(s)}$, where $I_{\pi_s'}:=\text{Ann}(\pi_s ')$.  This fits into the diagram in Theorem \ref{p:diagram} as follows.
\begin{itemize}
\item If $s$ is even, then $\nu_s =  \frac{1}{2}h(\mca O^{\vee})$, which is integral.  Here $\mca O^{\vee}$ is the nilpotent orbit in the dual algebra $\mfr g^\vee$ such that $d(\mca O^\vee) = \mca O (s)$, where
$d: \mca N^\vee \to \mca N$ is the duality map of Spaltenstein (see the appendix in \cite{BV6}, for example), and $h(\mca O^\vee)$ is the semisimple element of a Jacobsen--Morozov triple for $\mca O^\vee.$
We have the commutative triangle
$$\begin{tikzcd}
 & \mca{O}(s)  \ar[rd, "{{\rm Spr}_\mbm{1}^{-1}}"]\\
 I_{\pi_s'} \ar[ru, "{{\rm AV}}"] \ar[rr, "\sigma"] & & \sigma_I.
\end{tikzcd}$$
If $0\lest s \lest m+1$, then the primitive ideal $I_{\pi ' _s}$ is the maximal ideal at infinitesimal character $\nu_s$. In such case, $I_{\pi_s '}$ is called special unipotent, and so is $\pi_s '$. 
\item If $s$ is odd, then $\nu_s$ is nonintegral and $W_{\nu_s} = W(D_m\times D_{l-m})$.   We have 
$$\begin{tikzcd}
  \mca{O}(s) \ar[r, "{{\rm Spr}_\mbm{1}^{-1}}"]    &  {\rm Spr}_\mbm{1}^{-1}(\mca O(s ) )  \\
  I_{\pi'_s} \arrow[r, "\sigma"]  \arrow[u , "{{\rm AV}}"]&   {\rm Spr}_\mbm{1}^{-1}(2^{s-1}1^{2m-2s+2} )\boxtimes {\rm Spr}_\mbm{1}^{-1}(1^{2l-2m}) \arrow[u, "{j_{W_{\nu_s}}^W}"'] .
  \end{tikzcd}$$
\end{itemize}
Note that it is clear in the second case above, the representation ${\rm Spr}_\mbm{1}^{-1}(2^{s-1}1^{2m-2s+2} )$ is not the sign character of $W(D_m)$ in general.
\end{eg}

\section{Some remarks} \label{S:rmk}
The main Conjecture \ref{C:main} is stated for $\Theta(\pi^\dag, \nu)$ for $p$-adic $F$ only in the tame case. One reason is that for $p\nmid n$, the set $\msc{X}_{Q,n}$ is the ``moduli space" of the space of Whittaker functional of the genuine principal series $I(\pi^\dag, \nu)$; in particular,
$$\dim \Wh_\psi(I(\pi^\dag, \nu)) = \val{\msc{X}_{Q,n}}.$$
However, if $p|n$, then \eqref{E:main1} is still expected to hold, but \eqref{E:main2} might fail.

It is possible to incorporate the archimedean counterpart (as discussed in \S \ref{SSS:R-spl}) into the statement of Conjecture \ref{C:main}, at least regarding \eqref{E:main1}, if we assume a Barbasch--Vogan character expansion for covering groups. However, since the discussion in \S \ref{S:arch} utilizes the algebraic invariants instead, the results in \S \ref{SSS:R-spl} prove only a natural archimedean analogue of Conjecture \ref{C:main}. It is desirable to fill this ``gap" by checking the following:
\begin{enumerate}
\item[--] the Barbasch--Vogan character expansion holds for covering groups, as mentioned in \S \ref{s:WFAV}, and thus one has the analytic invariants $\mca{N}_{\rm tr}(\pi), \mca{N}_{\rm tr}^{\rm max}(\pi)$ for genuine representation $\pi$; and
\item[--] as in the linear algebraic case, these analytic invariants agree with the their algebraic counterpart.
\end{enumerate}

It is also possible to unify the $p$-adic and archimedean cases in the statement of Conjecture \ref{C:main} by using the generalized or degenerate Whittaker module of $\Theta(\pi^\dag, \nu)$, which we recall is defined for all local fields. Indeed, to every nilpotent orbit $\mca{O} \subset \mfr{g}$ and genuine irreducible representation $\pi$ of $\wt{G}$, one can associate a certain generalized Whittaker space $\pi_\mca{O}$ (see \cite{JLS, GGS17, GGS, GoSa}) and thus define
$$\mca{N}_{\rm Wh}(\pi)=\set{\mca{O} \in \mca{N}: \pi_\mca{O} \ne 0}.$$
Consider the subset $\mca{N}_{\rm Wh}^{\rm max}(\pi) \subset \mca{N}_{\rm Wh}(\pi)$ consisting of maximal elements. Then Conjecture \ref{C:main} could be stated by using $\mca{N}_{\rm Wh}^{\rm max}$ instead.
The advantage of this perspective is that it applies to all local field $F$ and covers $\wt{G}$ simultaneously. In the $p$-adic setting, its relation with $\mca{N}_{\rm tr}^{\rm max}(\pi)$ is the content of \cite{MW1, Var1, Li3, Pate}. However, to the best of our knowledge, in the archimedean setting, the relation between $\mca{N}_{\rm Wh}(\pi)$ and $\mca{N}_{\rm alg}(\pi)$ has not been understood completely, even for linear group $G$. We refer the reader to the excellent exposition \cite{GoSa} and the references therein for a  review on recent advance and open questions on the theory of generalized Whittaker space.

\begin{bibdiv}
\begin{biblist}[\resetbiblist{9999999}]*{labels={alphabetic}}

\bib{ABPTV}{article}{
  author={Adams, Jeffrey},
  author={Barbasch, Dan},
  author={Paul, Annegret},
  author={Trapa, Peter E.},
  author={ Vogan, David A., Jr.},
  title={Unitary Shimura correspondences for split real groups},
  journal={J. Amer. Math. Soc.},
  volume={20},
  date={2007},
  number={3},
  pages={701--751},
  issn={0017-095X},
  review={\MR {3151110}},
  doi={10.3336/gm.48.2.07},
}

\bib{BJ1}{article}{
  author={Ban, Dubravka},
  author={Jantzen, Chris},
  title={The Langlands quotient theorem for finite central extensions of $p$-adic groups},
  journal={Glas. Mat. Ser. III},
  volume={48(68)},
  date={2013},
  number={2},
  pages={313--334},
  issn={0017-095X},
  review={\MR {3151110}},
  doi={10.3336/gm.48.2.07},
}

\bib{Bar1}{article}{
  author={Barbasch, Dan},
  title={Unipotent representations and the dual pair correspondence},
  conference={ title={Representation theory, number theory, and invariant theory}},
  book={ series={Progr. Math.}, volume={323}, publisher={Birkh\"{a}user/Springer, Cham}, },
  date={2017},
  pages={47--85},
  issn={0949-5932},
  review={\MR {3831429}},
}

\bib{BTs1}{article}{
  author={Barbasch, Dan},
  author={Tsai, Wan-Yu},
  title={Representations associated to small niltpotent orbits for real spin groups},
  journal={J. Lie Theory},
  volume={28},
  date={2018},
  number={4},
  pages={987--1042},
  issn={0949-5932},
  review={\MR {3831429}},
}

\bib{BTs2}{article}{
  author={Barbasch, Dan},
  author={Tsai, Wan-Yu},
  title={Representations associated to small nilpotent orbits for complex Spin groups},
  journal={Represent. Theory},
  volume={22},
  date={2018},
  pages={202--222},
  review={\MR {3868568}},
  doi={10.1090/ert/517},
}

\bib{BV3}{article}{
  author={Barbasch, Dan},
  author={Vogan, David A., Jr.},
  title={The local structure of characters},
  journal={J. Functional Analysis},
  volume={37},
  date={1980},
  number={1},
  pages={27--55},
  issn={0022-1236},
  review={\MR {576644}},
  doi={10.1016/0022-1236(80)90026-9},
}

\bib{BV4}{article}{
  author={Barbasch, Dan},
  author={Vogan, David},
  title={Primitive ideals and orbital integrals in complex classical groups},
  journal={Math. Ann.},
  volume={259},
  date={1982},
  number={2},
  pages={153--199},
  issn={0025-5831},
  review={\MR {656661}},
}

\bib{BV5}{article}{
  author={Barbasch, Dan},
  author={Vogan, David},
  title={Primitive ideals and orbital integrals in complex exceptional groups},
  journal={J. Algebra},
  volume={80},
  date={1983},
  number={2},
  pages={350--382},
  issn={0021-8693},
  review={\MR {691809}},
}

\bib{BV6}{article}{
  author={Barbasch, Dan},
  author={Vogan, David A., Jr.},
  title={Unipotent representations of complex semisimple groups},
  journal={Ann. of Math. (2)},
  volume={121},
  date={1985},
  number={1},
  pages={41--110},
  issn={0003-486X},
  review={\MR {782556}},
  doi={10.2307/1971193},
}

\bib{Bou}{book}{
  author={Bourbaki, Nicolas},
  title={Lie groups and Lie algebras. Chapters 4--6},
  series={Elements of Mathematics (Berlin)},
  note={Translated from the 1968 French original by Andrew Pressley},
  publisher={Springer-Verlag, Berlin},
  date={2002},
  pages={xii+300},
  isbn={3-540-42650-7},
  review={\MR {1890629}},
  doi={10.1007/978-3-540-89394-3},
}

\bib{BD}{article}{
  author={Brylinski, Jean-Luc},
  author={Deligne, Pierre},
  title={Central extensions of reductive groups by $\bold K_2$},
  journal={Publ. Math. Inst. Hautes \'Etudes Sci.},
  number={94},
  date={2001},
  pages={5--85},
  issn={0073-8301},
  review={\MR {1896177}},
  doi={10.1007/s10240-001-8192-2},
}

\bib{BFrG2}{article}{
  author={Bump, Daniel},
  author={Friedberg, Solomon},
  author={Ginzburg, David},
  title={Small representations for odd orthogonal groups},
  journal={Int. Math. Res. Not.},
  date={2003},
  number={25},
  pages={1363--1393},
  issn={1073-7928},
  review={\MR {1968295}},
  doi={10.1155/S1073792803210217},
}

\bib{BFrG}{article}{
  author={Bump, Daniel},
  author={Friedberg, Solomon},
  author={Ginzburg, David},
  title={Lifting automorphic representations on the double covers of orthogonal groups},
  journal={Duke Math. J.},
  volume={131},
  date={2006},
  number={2},
  pages={363--396},
  issn={0012-7094},
  review={\MR {2219245}},
}

\bib{Cai1}{article}{
  author={Cai, Yuanqing},
  title={Fourier coefficients for theta representations on covers of general linear groups},
  journal={Trans. Amer. Math. Soc.},
  volume={371},
  date={2019},
  number={11},
  pages={7585--7626},
  issn={0002-9947},
  review={\MR {3955529}},
  doi={10.1090/tran/7429},
}

\bib{Car}{book}{
  author={Carter, Roger W.},
  title={Finite groups of Lie type},
  series={Wiley Classics Library},
  note={Conjugacy classes and complex characters; Reprint of the 1985 original; A Wiley-Interscience Publication},
  publisher={John Wiley \& Sons, Ltd., Chichester},
  date={1993},
  pages={xii+544},
  isbn={0-471-94109-3},
  review={\MR {1266626}},
}

\bib{CM}{book}{
  author={Collingwood, David H.},
  author={McGovern, William M.},
  title={Nilpotent orbits in semisimple Lie algebras},
  series={Van Nostrand Reinhold Mathematics Series},
  publisher={Van Nostrand Reinhold Co., New York},
  date={1993},
  pages={xiv+186},
  isbn={0-534-18834-6},
  review={\MR {1251060}},
}

\bib{Duf}{article}{
  author={Duflo, Michel},
  title={Sur la classification des id\'{e}aux primitifs dans l'alg\`ebre enveloppante d'une alg\`ebre de Lie semi-simple},
  journal={Ann. of Math. (2)},
  volume={105},
  date={1977},
  number={1},
  pages={107--120},
  issn={0003-486X},
  review={\MR {430005}},
  doi={10.2307/1971027},
}

\bib{FG2}{article}{
  author={Friedberg, Solomon},
  author={Ginzburg, David},
  title={Descent and theta functions for metaplectic groups},
  journal={J. Eur. Math. Soc. (JEMS)},
  volume={20},
  date={2018},
  number={8},
  pages={1913--1957},
  issn={1435-9855},
  review={\MR {3854895}},
  doi={10.4171/JEMS/803},
}

\bib{FG4}{article}{
  author={Friedberg, Solomon},
  author={Ginzburg, David},
  title={On the genericity of Eisenstein series and their residues for covers of $GL_m$},
  journal={Int. Math. Res. Not. IMRN},
  date={2017},
  number={4},
  pages={1000--1012},
  issn={1073-7928},
  review={\MR {3658158}},
  doi={10.1093/imrn/rnw042},
}

\bib{FG5}{article}{
  author={Friedberg, Solomon},
  author={Ginzburg, David},
  title={Theta functions on covers of symplectic groups},
  journal={Bull. Iranian Math. Soc.},
  volume={43},
  date={2017},
  number={4},
  pages={89--116},
  issn={1017-060X},
  review={\MR {3711824}},
}

\bib{FG6}{article}{
  author={Friedberg, Solomon},
  author={Ginzburg, David},
  title={Classical theta lifts for higher metaplectic covering groups},
  status={preprint, available at http://arxiv.org/abs/2006.09305},
}

\bib{GG}{article}{
  author={Gan, Wee Teck},
  author={Gao, Fan},
  title={The Langlands-Weissman program for Brylinski-Deligne extensions},
  language={English, with English and French summaries},
  note={L-groups and the Langlands program for covering groups},
  journal={Ast\'erisque},
  date={2018},
  number={398},
  pages={187--275},
  issn={0303-1179},
  isbn={978-2-85629-845-9},
  review={\MR {3802419}},
}

\bib{GS05}{article}{
  author={Gan, Wee Teck},
  author={Savin, Gordan},
  title={On minimal representations definitions and properties},
  journal={Represent. Theory},
  volume={9},
  date={2005},
  pages={46--93},
  review={\MR {2123125}},
  doi={10.1090/S1088-4165-05-00191-3},
}

\bib{Ga2}{article}{
  author={Gao, Fan},
  title={Distinguished theta representations for certain covering groups},
  journal={Pacific J. Math.},
  volume={290},
  date={2017},
  number={2},
  pages={333--379},
  doi={10.2140/pjm.2017.290.333},
}

\bib{Ga6}{article}{
  author={Gao, Fan},
  title={Kazhdan--Lusztig representations and Whittaker space of some genuine representations},
  journal={Math. Ann.},
  volume={376},
  date={2020},
  number={1},
  pages={289--358},
  doi={10.1007/s00208-019-01925-1},
}

\bib{GSS1}{article}{
  author={Gao, Fan},
  author={Shahidi, Freydoon},
  author={Szpruch, Dani},
  title={On the local coefficients matrix for coverings of $\rm SL_2$},
  conference={ title={Geometry, algebra, number theory, and their information technology applications}, },
  book={ series={Springer Proc. Math. Stat.}, volume={251}, publisher={Springer, Cham}, },
  date={2018},
  pages={207--244},
  review={\MR {3880389}},
}

\bib{GePf}{book}{
  author={Geck, Meinolf},
  author={Pfeiffer, G\"{o}tz},
  title={Characters of finite Coxeter groups and Iwahori-Hecke algebras},
  series={London Mathematical Society Monographs. New Series},
  volume={21},
  publisher={The Clarendon Press, Oxford University Press, New York},
  date={2000},
  pages={xvi+446},
  isbn={0-19-850250-8},
  review={\MR {1778802}},
}

\bib{Gin0}{article}{
  author={Ginzburg, David},
  title={Certain conjectures relating unipotent orbits to automorphic representations},
  journal={Israel J. Math.},
  volume={151},
  date={2006},
  pages={323--355},
  issn={0021-2172},
  review={\MR {2214128}},
  doi={10.1007/BF02777366},
}

\bib{Gin2}{article}{
  author={Ginzburg, David},
  title={Towards a classification of global integral constructions and functorial liftings using the small representations method},
  journal={Adv. Math.},
  volume={254},
  date={2014},
  pages={157--186},
  issn={0001-8708},
  review={\MR {3161096}},
}

\bib{Gin5}{article}{
  author={Ginzburg, David},
  title={On certain global constructions of automorphic forms related to a small representation of $F_4$},
  journal={J. Number Theory},
  volume={200},
  date={2019},
  pages={1--95},
  issn={0022-314X},
  review={\MR {3944431}},
  doi={10.1016/j.jnt.2019.01.020},
}

\bib{GRS1}{article}{
  author={Ginzburg, David},
  author={Rallis, Stephen},
  author={Soudry, David},
  title={Cubic correspondences arising from $G_2$},
  journal={Amer. J. Math.},
  volume={119},
  date={1997},
  number={2},
  pages={251--335},
  issn={0002-9327},
  review={\MR {1439552}},
}

\bib{GGS17}{article}{
  author={Gomez, Raul},
  author={Gourevitch, Dmitry},
  author={Sahi, Siddhartha},
  title={Generalized and degenerate Whittaker models},
  journal={Compos. Math.},
  volume={153},
  date={2017},
  number={2},
  pages={223--256},
  issn={0010-437X},
  review={\MR {3705224}},
  doi={10.1112/S0010437X16007788},
}

\bib{GGS}{article}{
  author={Gomez, Raul},
  author={Gourevitch, Dmitry},
  author={Sahi, Siddhartha},
  title={Whittaker supports for representations of reductive groups},
  status={preprint, available at https://arxiv.org/abs/1610.00284v6},
}

\bib{GoSa}{article}{
  author={Gourevitch, Dmitry},
  author={Sahi, Siddhartha},
  title={Generalized and degenerate Whittaker quotients and Fourier coefficients},
  conference={ title={Representations of reductive groups}, },
  book={ series={Proc. Sympos. Pure Math.}, volume={101}, publisher={Amer. Math. Soc., Providence, RI}, },
  date={2019},
  pages={133--154},
  review={\MR {3930016}},
}

\bib{HC1}{book}{
  author={Harish-Chandra},
  title={Admissible invariant distributions on reductive $p$-adic groups},
  series={University Lecture Series},
  volume={16},
  note={With a preface and notes by Stephen DeBacker and Paul J. Sally, Jr.},
  publisher={American Mathematical Society, Providence, RI},
  date={1999},
  pages={xiv+97},
  isbn={0-8218-2025-7},
  review={\MR {1702257}},
  doi={10.1090/ulect/016},
}

\bib{HII}{article}{
  author={Hiraga, Kaoru},
  author={Ichino, Atsushi},
  author={Ikeda, Tamotsu},
  title={Formal degrees and adjoint $\gamma $-factors},
  journal={J. Amer. Math. Soc.},
  volume={21},
  date={2008},
  number={1},
  pages={283--304},
  issn={0894-0347},
  review={\MR {2350057}},
  doi={10.1090/S0894-0347-07-00567-X},
}

\bib{HIIc}{article}{
  author={Hiraga, Kaoru},
  author={Ichino, Atsushi},
  author={Ikeda, Tamotsu},
  title={Correction to: ``Formal degrees and adjoint $\gamma $-factors'' [J. Amer. Math. Soc. {\bf 21} (2008), no. 1, 283--304; MR2350057]},
  journal={J. Amer. Math. Soc.},
  volume={21},
  date={2008},
  number={4},
  pages={1211--1213},
  issn={0894-0347},
  review={\MR {2425185}},
  doi={10.1090/S0894-0347-08-00605-X},
}

\bib{How1}{article}{
  author={Howe, Roger},
  title={The Fourier transform and germs of characters (case of ${\rm Gl}_{n}$ over a $p$-adic field)},
  journal={Math. Ann.},
  volume={208},
  date={1974},
  pages={305--322},
  issn={0025-5831},
  review={\MR {342645}},
  doi={10.1007/BF01432155},
}

\bib{How2}{article}{
  author={Howe, Roger},
  title={Wave front sets of representations of Lie groups},
  conference={ title={Automorphic forms, representation theory and arithmetic (Bombay, 1979)}, },
  book={ series={Tata Inst. Fund. Res. Studies in Math.}, volume={10}, publisher={Tata Inst. Fundamental Res., Bombay}, },
  date={1981},
  pages={117--140},
  review={\MR {633659}},
}

\bib{JLS}{article}{
  author={Jiang, Dihua},
  author={Liu, Baiying},
  author={Savin, Gordan},
  title={Raising nilpotent orbits in wave-front sets},
  journal={Represent. Theory},
  volume={20},
  date={2016},
  pages={419--450},
  review={\MR {3564676}},
  doi={10.1090/ert/490},
}

\bib{JosII}{article}{
  author={Joseph, Anthony},
  title={Goldie rank in the enveloping algebra of a semisimple Lie algebra. II},
  journal={J. Algebra},
  volume={65},
  date={1980},
  number={2},
  pages={284--306},
  issn={0021-8693},
  review={\MR {585721}},
  doi={10.1016/0021-8693(80)90217-3},
}

\bib{Kap004}{article}{
  author={Kaplan, Eyal},
  title={The double cover of odd general spin groups, small representations, and applications},
  journal={J. Inst. Math. Jussieu},
  volume={16},
  date={2017},
  number={3},
  pages={609--671},
  issn={1474-7480},
  review={\MR {3646283}},
  doi={10.1017/S1474748015000250},
}

\bib{KS}{article}{
  author={Kazhdan, D.},
  author={Savin, G.},
  title={The smallest representation of simply laced groups},
  conference={ title={Festschrift in honor of I. I. Piatetski-Shapiro on the occasion of his sixtieth birthday, Part I}, address={Ramat Aviv}, date={1989}, },
  book={ series={Israel Math. Conf. Proc.}, volume={2}, publisher={Weizmann, Jerusalem}, },
  date={1990},
  pages={209--223},
  review={\MR {1159103}},
}

\bib{KR}{article}{
  author={Kostant, B.},
  author={Rallis, S.},
  title={Orbits and representations associated with symmetric spaces},
  journal={Amer. J. Math.},
  volume={93},
  date={1971},
  pages={753--809},
  issn={0002-9327},
  review={\MR {311837}},
  doi={10.2307/2373470},
}

\bib{Les}{article}{
  author={Leslie, Spencer},
  title={A generalized theta lifting, CAP representations, and Arthur parameters},
  journal={Trans. Amer. Math. Soc.},
  volume={372},
  date={2019},
  number={7},
  pages={5069--5121},
  issn={0002-9947},
  review={\MR {4009400}},
  doi={10.1090/tran/7863},
}

\bib{Li3}{article}{
  author={Li, Wen-Wei},
  title={La formule des traces pour les rev\^etements de groupes r\'eductifs connexes. II. Analyse harmonique locale},
  language={French, with English and French summaries},
  journal={Ann. Sci. \'Ec. Norm. Sup\'er. (4)},
  volume={45},
  date={2012},
  number={5},
  pages={787--859 (2013)},
  issn={0012-9593},
  review={\MR {3053009}},
  doi={10.24033/asens.2178},
}

\bib{LoSa1}{article}{
  author={Loke, Hung Yean},
  author={Savin, Gordan},
  title={On minimal representations of Chevalley groups of type $D_n,\ E_n$ and $G_2$},
  journal={Math. Ann.},
  volume={340},
  date={2008},
  number={1},
  pages={195--208},
  issn={0025-5831},
  review={\MR {2349773}},
  doi={10.1007/s00208-007-0144-9},
}

\bib{LoSa2}{article}{
  author={Loke, Hung Yean},
  author={Savin, Gordan},
  title={The smallest representations of nonlinear covers of odd orthogonal groups},
  journal={Amer. J. Math.},
  volume={130},
  date={2008},
  number={3},
  pages={763--797},
  issn={0002-9327},
  review={\MR {2418927}},
  doi={10.1353/ajm.0.0004},
}

\bib{Lus79}{article}{
  author={Lusztig, G.},
  title={A class of irreducible representations of a Weyl group},
  journal={Nederl. Akad. Wetensch. Indag. Math.},
  volume={41},
  date={1979},
  number={3},
  pages={323--335},
  issn={0019-3577},
  review={\MR {546372}},
}

\bib{LuSp1}{article}{
  author={Lusztig, G.},
  author={Spaltenstein, N.},
  title={Induced unipotent classes},
  journal={J. London Math. Soc. (2)},
  volume={19},
  date={1979},
  number={1},
  pages={41--52},
  issn={0024-6107},
  review={\MR {527733}},
  doi={10.1112/jlms/s2-19.1.41},
}

\bib{Mac1}{article}{
  author={Macdonald, I. G.},
  title={Some irreducible representations of Weyl groups},
  journal={Bull. London Math. Soc.},
  volume={4},
  date={1972},
  pages={148--150},
  issn={0024-6093},
  review={\MR {320171}},
  doi={10.1112/blms/4.2.148},
}

\bib{MW1}{article}{
  author={M\oe glin, C.},
  author={Waldspurger, J.-L.},
  title={Mod\`eles de Whittaker d\'eg\'en\'er\'es pour des groupes $p$-adiques},
  language={French},
  journal={Math. Z.},
  volume={196},
  date={1987},
  number={3},
  pages={427--452},
  issn={0025-5874},
  review={\MR {913667}},
}

\bib{Pate}{article}{
  author={Prakash Patel, Shiv},
  title={A theorem of M\oe glin and Waldspurger for covering groups},
  journal={Pacific J. Math.},
  volume={273},
  date={2015},
  number={1},
  pages={225--239},
  issn={0030-8730},
  review={\MR {3290452}},
}

\bib{Rod4}{article}{
  author={Rodier, Fran\c {c}ois},
  title={D\'ecomposition de la s\'erie principale des groupes r\'eductifs $p$-adiques},
  language={French},
  conference={ title={Noncommutative harmonic analysis and Lie groups}, address={Marseille}, date={1980}, },
  book={ series={Lecture Notes in Math.}, volume={880}, publisher={Springer, Berlin-New York}, },
  date={1981},
  pages={408--424},
  review={\MR {644842}},
}

\bib{Ros1}{article}{
  author={Rossmann, W.},
  title={Picard-Lefschetz theory for the coadjoint quotient of a semisimple Lie algebra},
  journal={Invent. Math.},
  volume={121},
  date={1995},
  number={3},
  pages={531--578},
  issn={0020-9910},
  review={\MR {1353308}},
  doi={10.1007/BF01884311},
}

\bib{Ros2}{article}{
  author={Rossmann, W.},
  title={Picard-Lefschetz theory and characters of a semisimple Lie group},
  journal={Invent. Math.},
  volume={121},
  date={1995},
  number={3},
  pages={579--611},
  issn={0020-9910},
  review={\MR {1353309}},
  doi={10.1007/BF01884312},
}

\bib{Rum}{article}{
  author={Rumelhart, Karl E.},
  title={Minimal representations of exceptional $p$-adic groups},
  journal={Represent. Theory},
  volume={1},
  date={1997},
  pages={133--181},
  review={\MR {1455128}},
  doi={10.1090/S1088-4165-97-00009-5},
}

\bib{Sav94}{article}{
  author={Savin, Gordan},
  title={Dual pair $G_{\scr J}\times {\rm PGL}_2$ [where] $G_{\scr J}$ is the automorphism group of the Jordan algebra ${\scr J}$},
  journal={Invent. Math.},
  volume={118},
  date={1994},
  number={1},
  pages={141--160},
  issn={0020-9910},
  review={\MR {1288471}},
  doi={10.1007/BF01231530},
}

\bib{Sav2}{article}{
  author={Savin, Gordan},
  title={A nice central extension of $GL_r$},
  status={preprint},
}

\bib{Sav4}{article}{
  author={Savin, Gordan},
  title={An analogue of the Weil representation for $G_2$},
  journal={J. Reine Angew. Math.},
  volume={434},
  date={1993},
  pages={115--126},
  issn={0075-4102},
  review={\MR {1195692}},
  doi={10.1515/crll.1993.434.115},
}

\bib{SV}{article}{
  author={Schmid, Wilfried},
  author={Vilonen, Kari},
  title={Characteristic cycles and wave front cycles of representations of reductive Lie groups},
  journal={Ann. of Math. (2)},
  volume={151},
  date={2000},
  number={3},
  pages={1071--1118},
  issn={0003-486X},
  review={\MR {1779564}},
  doi={10.2307/121129},
}

\bib{Spr}{article}{
  author={Springer, T. A.},
  title={A construction of representations of Weyl groups},
  journal={Invent. Math.},
  volume={44},
  date={1978},
  number={3},
  pages={279--293},
  issn={0020-9910},
  review={\MR {0491988}},
}

\bib{Spr1}{article}{
  author={Springer, T. A.},
  title={Reductive groups},
  conference={ title={Automorphic forms, representations and $L$-functions}, address={Proc. Sympos. Pure Math., Oregon State Univ., Corvallis, Ore.}, date={1977}, },
  book={ series={Proc. Sympos. Pure Math., XXXIII}, publisher={Amer. Math. Soc., Providence, R.I.}, },
  date={1979},
  pages={3--27},
  review={\MR {546587}},
}

\bib{Tor}{article}{
  author={Torasso, Pierre},
  title={M\'{e}thode des orbites de Kirillov-Duflo et repr\'{e}sentations minimales des groupes simples sur un corps local de caract\'{e}ristique nulle},
  language={French},
  journal={Duke Math. J.},
  volume={90},
  date={1997},
  number={2},
  pages={261--377},
  issn={0012-7094},
  review={\MR {1484858}},
  doi={10.1215/S0012-7094-97-09009-8},
}

\bib{Var0}{article}{
  author={Varma, Sandeep},
  title={On a result of Moeglin and Waldspurger in residual characteristic 2},
  journal={Math. Z.},
  volume={277},
  date={2014},
  number={3-4},
  pages={1027--1048},
  issn={0025-5874},
  review={\MR {3229979}},
  doi={10.1007/s00209-014-1292-8},
}

\bib{Tra1}{article}{
  author={Trapa, Peter E.},
  title={Some small unipotent representations of indefinite orthogonal groups},
  journal={J. Funct. Anal.},
  volume={213},
  date={2004},
  number={2},
  pages={290--320},
  issn={0022-1236},
  review={\MR {2078628}},
  doi={10.1016/j.jfa.2003.09.003},
}

\bib{Tsa}{article}{
  author={Tsai, Wan-Yu},
  title={Some genuine small representations of a nonlinear double cover},
  journal={Trans. Amer. Math. Soc.},
  volume={371},
  date={2019},
  number={8},
  pages={5309--5340},
  issn={0002-9947},
  review={\MR {3937294}},
  doi={10.1090/tran/7351},
}

\bib{Var1}{article}{
  author={Varma, Sandeep},
  title={On descent and the generic packet conjecture},
  journal={Forum Math.},
  volume={29},
  date={2017},
  number={1},
  pages={111--155},
  issn={0933-7741},
  review={\MR {3592596}},
  doi={10.1515/forum-2015-0113},
}

\bib{IC3}{article}{
  author={Vogan, David A.},
  title={Irreducible characters of semisimple Lie groups. III. Proof of Kazhdan-Lusztig conjecture in the integral case},
  journal={Invent. Math.},
  volume={71},
  date={1983},
  number={2},
  pages={381--417},
  issn={0020-9910},
  review={\MR {689650}},
  doi={10.1007/BF01389104},
}

\bib{Vog5}{article}{
  author={Vogan, David A., Jr.},
  title={Associated varieties and unipotent representations},
  conference={ title={Harmonic analysis on reductive groups}, address={Brunswick, ME}, date={1989}, },
  book={ series={Progr. Math.}, volume={101}, publisher={Birkh\"{a}user Boston, Boston, MA}, },
  date={1991},
  pages={315--388},
  review={\MR {1168491}},
}

\bib{Wal80}{article}{
  author={Waldspurger, J.-L.},
  title={Correspondance de Shimura},
  language={French},
  journal={J. Math. Pures Appl. (9)},
  volume={59},
  date={1980},
  number={1},
  pages={1--132},
  issn={0021-7824},
  review={\MR {577010}},
}

\bib{We6}{article}{
  author={Weissman, Martin H.},
  title={L-groups and parameters for covering groups},
  language={English, with English and French summaries},
  note={L-groups and the Langlands program for covering groups},
  journal={Ast\'erisque},
  date={2018},
  number={398},
  pages={33--186},
  issn={0303-1179},
  isbn={978-2-85629-845-9},
  review={\MR {3802418}},
}

\end{biblist}
\end{bibdiv}

\end{document}